\numberwithin{equation}{section}
\newtheorem{theorem}{\textit{Theorem}}[section]
\newtheorem{proposition}[theorem]{\textit{Proposition}}
\newtheorem{lemma}[theorem]{\textit{Lemma}}
\newtheorem{corollary}[theorem]{\textit{Corollary}}
\newtheorem{remark}[theorem]{\textit{Remark}}
\title{On the topology of some hyperspaces of convex bodies associated to tensor norms}
\author{Luisa F. Higueras-Monta{\~n}o and Natalia Jonard-P{\'e}rez}
\address{Departamento de  Matem\'aticas,
Facultad de Ciencias, Universidad Nacional Aut\'onoma de M\'exico, 04510 Ciudad de M\'exico, M\'exico.}
\email{(L. Higueras-Monta\~no) fher@cimat.mx}
\email{(N.\,Jonard-P\'erez) nat@ciencias.unam.mx}
\thanks{
The first author has been supported by The Post-Doctoral Scholarship Program at UNAM. The first and second author have been supported by CONACyT grant 252849 (M\'exico) and by  PAPIIT grant IN115819  (UNAM, M\'exico)}
\keywords{Infinite dimensional topology, absolute neighborhood retract, convex body, tensor norm, hyperspace, tensor product of convex sets, Banach-Mazur compactum}
\subjclass[2020]{57N20, 46M05, 52A21, 57S20, 54C55, 15A69.}
\begin{document}

\begin{abstract}
For every tuple $d_1,\dots, d_l\geq 2,$ let $\mathbb{R}^{d_1}\otimes\cdots\otimes\mathbb{R}^{d_l}$ denote the tensor product of $\mathbb{R}^{d_i},$ $i=1,\dots,l.$ Let us denote by $\mathcal{B}(d)$ the hyperspace of centrally symmetric convex bodies in  $\mathbb{R}^d,$ $d=d_1\cdots d_l,$ endowed with the Hausdorff distance, and by
$\mathcal{B}_\otimes(d_1,\dots,d_l)$ the subset of $\mathcal{B}(d)$ consisting of the convex bodies that are closed unit balls of reasonable crossnorms on $\mathbb{R}^{d_1}\otimes\cdots\otimes\mathbb{R}^{d_l}.$ It is known that $\mathcal{B}_\otimes(d_1,\dots,d_l)$ is a closed, contractible and locally compact subset of $\mathcal{B}(d).$ The hyperspace $\mathcal{B}_\otimes(d_1,\dots,d_l)$ is called \textit{the space of tensorial bodies}. In this work we determine the homeomorphism type of $\mathcal{B}_\otimes(d_1,\dots,d_l).$ 
We show that even if $\mathcal{B}_\otimes(d_1,\dots,d_l)$  is not closed with respect to the Minkowski sum, it is  an absolute retract homeomorphic to $\mathcal{Q}\times\mathbb{R}^p,$ where $\mathcal{Q}$ is the Hilbert cube and $p=\frac{d_1(d_1+1)+\cdots+d_l(d_l+1)}{2}.$ 
Among other results, the relation between the Banach-Mazur compactum and the Banach-Mazur type compactum associated to $\mathcal{B}_\otimes(d_1,\dots,d_l)$ is examined. 
\end{abstract}

\maketitle

%%Graphical abstract
%%\begin{graphicalabstract}
%\includegraphics{grabs}
%%\end{graphicalabstract}

%%Research highlights
%\begin{highlights}
%\item Research highlight 1
%\item Research highlight 2
%\end{highlights}

%\begin{keyword}
%Infinite-dimensional topology  \sep convex body \sep tensor norm \sep hyperspace \sep tensor product of convex sets \sep Banach-Mazur compactum
%% keywords here, in the form: keyword \sep keyword

%% PACS codes here, in the form: \PACS code \sep code

%% MSC codes here, in the form: \MSC code \sep code
%% or \MSC[2008] code \sep code (2000 is the default)
%\MSC[2020] 57N20 \sep 46M05 \sep 52A21 \sep 57S20 \sep 54C55 \sep 15A69.

%\end{keyword}

%\end{frontmatter}

%% \linenumbers

%% main text
%\section{}
%\label{}

%% The Appendices part is started with the command \appendix;
%% appendix sections are then done as normal sections
%% \appendix

%% \section{}
%% \label{}

%% If you have bibdatabase file and want bibtex to generate the
%% bibitems, please use
%%
%%  \bibliographystyle{elsarticle-harv} 
%%  \bibliography{<your bibdatabase>}

%% else use the following coding to input the bibitems directly in the
%% TeX file.

\section{Introduction}
\label{sec:introduction}

In the last 50 years a lot of research on the topological properties 
of some hyperspaces of convex compacta in the Euclidean space $\mathbb{R}^d$ has been carried out.
Among others,  the topological structure of the following hyperspaces has been determined: the non-empty compact convex sets $cc(\mathbb{R}^d)$ \cite{nadler1979}; the centrally symmetric convex bodies $\mathcal{B}(d)$ \cite{banacamazurcompactum,antonyananerviowest};  the class of convex bodies $\mathcal{K}_d$ \cite{AntonyanNatalia,Macbeath51}; the convex bodies of constant width \cite{antonyanjonardordonez,Bazilevich97,Bazylevych2006}; and, recently, the convex bodies with smooth boundary \cite{Belegradek2018}.
As can be traced in the seminal work \cite{nadler1979}, the fundamental tool to tackle these problems has been the theory of Hilbert cube manifolds. A Hilbert cube manifold is a separable and metrizable space  modeled on the Hilbert cube (see Section \ref{sec:notation}). The Hilbert cube $\mathcal{Q}$ is the space $\prod_{i=1}^{\infty}[-1,1]$ endowed with the product topology. 

\

 Some of the previous hyperspaces arise naturally in contexts different from Convex Geometry or Geometric Topology. This is the case of $\mathcal{B}(d),$ which emerges from the theory of Banach spaces. It  is well-known that the convex bodies in $\mathcal{B}(d)$ are, precisely, the unit balls of norms on $\mathbb{R}^d$ (see for instance \cite[Section 1.7]{Schneider1993} or \cite[Section 1.1]{ThompsonAC}). In the same vein, a hyperspace arising from the theory of tensor norms on Banach spaces was introduced in \cite{tensorialbodies,topologytensorialbodies}. It is called \textit{the space of tensorial bodies} 
 $\mathcal{B}_\otimes(d_1,\dots,d_l),$ $d_i\in\mathbb{N},$ $i=1,\dots,l,$ and it is composed by the convex bodies $P\subset\mathbb{R}^d,$ $d=d_1\cdots d_l,$ that are unit balls of Banach spaces, whose norms are reasonable crossnorms.
The main goal of this paper is to calculate the topological structure of this hyperspace. Our approach to the problem is based on the theory of Hilbert cube manifolds and topological groups (see Sections \ref{sec:notation} and \ref{sec:Gspaces}). 

\

Before giving a formal definition of $\mathcal{B}_\otimes(d_1,\dots,d_l)$, it is convenient to recall some facts about the so called theory of tensor norms. We are interested in the finite-dimensional setting.
A norm $\alpha(\cdot)$ on the tensor product of finite-dimensional Banach spaces $\otimes_{i=1}^{l}(\mathbb{R}^{d_i},\|\cdot\|_i)$ is called a \textit{reasonable crossnorm} if and only if
\begin{equation*}
\label{eq:Caracterizacion de normars razonables cruzadas}
\epsilon\left(u\right)\leq\alpha\left(u\right)\leq\pi\left(u\right)\text{ for every }u\in\otimes_{i=1}^{l}(\mathbb{R}^{d_i},\|\cdot\|_i).
\end{equation*}
The norms $\epsilon(\cdot)$ and $\pi(\cdot)$ are the injective and the projective tensor norms, respectively. Actually, they are the smallest and the biggest reasonable crossnorms (we refer to \cite{defantfloret,Ryan2013} for the definitions and basic properties). In \cite[Theorem 3.2]{tensorialbodies}, a characterization of the convex bodies in $\otimes_{i=1}^{l}\mathbb{R}^{d_i}\simeq\mathbb{R}^{d},$ $d=d_1,\dots,d_l,$ that are closed unit balls of reasonable crossnorms is given. That is, an explicit description of the convex bodies $P\subset\otimes_{i=1}^{l}\mathbb{R}^{d_i}$ for which there exists norms $\|\cdot\|_i,$ $i=1,\dots,l,$ (not determined a priori)  such that $P$ is the closed unit ball of a reasonable crossnorm on $\otimes_{i=1}^{l}(\mathbb{R}^{d_i},\|\cdot\|_i)$ is established. With it, the class of tensorial bodies in $\otimes_{i=1}^{l}\mathbb{R}^{d_i}$ is introduced \cite[Definition 3.3]{tensorialbodies}. A centrally symmetric ($0$-symmetric) convex body  $P\subset\otimes_{i=1}^{l}\mathbb{R}^{d_i}$ is called \textit{tensorial body} if there exists $0$-symmetric convex bodies $P_i\subset\mathbb{R}^{d_i},$ $i=1,\dots,l,$ such that 
$$
P_{1}\otimes_{\pi}\cdots\otimes_{\pi}P_{l}\subseteq P\subseteq P_{1}\otimes_{\epsilon}\cdots\otimes_{\epsilon}P_{l}.
$$
Where, $\otimes_{\pi}$ and $\otimes_{\epsilon}$ are the projective and the injective tensor products of convex bodies, respectively (see Section \ref{sec:basictensorialbodies}). The hyperspace associated to the tensorial bodies $\mathcal{B}_\otimes(d_1,\dots.,d_l)$ is introduced in \cite{topologytensorialbodies}.  There, some of its fundamental properties are exhibited. It is known that $\mathcal{B}_\otimes(d_1,\dots,d_l)$ carries a natural structure of a $GL_\otimes$-proper space. In this case, $GL_\otimes(d_1,\dots,d_l)$ is the group consisting of linear isomorphisms on $\otimes_{i=1}^{l}\mathbb{R}^{d_i}$ preserving decomposable vectors (see Section \ref{sec:Gspaces}). The action is given by the evaluation $TP,$ for every $T\in GL_\otimes(d_1,\dots,d_l)$ and $P\in\mathcal{B}_\otimes(d_1,\dots,d_l).$ The associated orbit space $\mathcal{B}_\otimes(d_1,\dots,d_l)/GL_\otimes(d_1,\dots,d_l)$ is a metrizable compact space called \textit{the compactum of tensorial bodies} \cite[Corollary 4.9]{topologytensorialbodies}.

The main results of the paper are Theorem \ref{thm:ANRproperty} and Corollary \ref{cor:mainresultcor}. In Theorem \ref{thm:ANRproperty}, we show that $\mathcal{B}_\otimes(d_1,\dots,d_l)$  is an absolute retract (see Section \ref{sec:notation} for the definition). In Corollary \ref{cor:mainresultcor}, we calculate the topological structure of the space of tensorial bodies. It is proved that 
\begin{equation}
\label{eq:pvalue}
\mathcal{B}_\otimes(d_1,\ldots,d_l)\textit{ is homeomorphic to }\mathcal{Q}\times\mathbb{R}^p,\ 
 p=\frac{d_1(d_1+1)+\cdots+d_l(d_l+1)}{2},
\end{equation}
for every tuple  $d_i\geq2,$ $i=1,\dots,l.$ It is known that  in $\mathbb{R}\otimes\mathbb{R}^d$  the tensor structure is trivial, and $\mathcal{B}_\otimes(1,d)$ coincides with $\mathcal{B}(d)$ \cite[Proposition 3.8]{tensorialbodies}.

We now briefly describe the contents of the paper. In Sections \ref{sec:notation} and \ref{sec:Gspaces}, we introduce the notation and basic results on Hilbert cube manifolds and the theory of $G$-spaces. Then, in Section \ref{sec:basictensorialbodies}, we review the fundamental properties of the space of tensorial bodies \cite{tensorialbodies,topologytensorialbodies}. In Proposition \ref{prop:resumeprop} and Theorem \ref{thm:resumenthm}, we summarize the results from \cite{topologytensorialbodies} that we use frequently in the work. In this sense, it is recalled that
\begin{equation}
\label{eq:introLtensor}
\mathcal{B}_\otimes(d_1,\dots,d_l)\textit{ is homeomorphic to }\mathcal{L}_\otimes(d_1,\dots,d_l)\times\mathbb{R}^p
\end{equation}
 where $\mathcal{L}_\otimes(d_1,\dots,d_l)$ is the $O_\otimes$-global slice for the $GL_\otimes$-space $\mathcal{B}_\otimes(d_1,\dots,d_l)$ introduced in \cite[Section 4.1]{topologytensorialbodies}, and $p$ is the same as (\ref{eq:pvalue}). Here, $O_\otimes(d_1,\dots,d_l)$ is the closed subgroup of $GL_\otimes(d_1,\dots,d_l)$ consisting of orthogonal maps (Section \ref{sec:Gspaces}). Through the work, the letters $d, d_i\geq2,$ $i=1,\dots,l,$ will denote natural numbers. The proposition \ref{prop:continuity of hilbert} is new. It shows that the Hilbertian tensor product $\otimes_2$ of $0$-symmetric ellipsoids is continuous with respect to the Hausdorff distance.

In Section \ref{sec:relatedhyperspaces}, we completely describe the topological structure of the hyperspace $\Pi(d_1,\dots,d_l)=\{P_{1}\otimes_{\pi}\cdots\otimes_{\pi}P_{l}:P_i\in\mathcal{B}(d_i)\},$ determined by the projective tensor product $\otimes_\pi$. We show that  the
$GL_\otimes$-map
$conv_\otimes:\mathcal{B}_\otimes(d_1,\dots,d_l)\rightarrow\Pi(d_1,\dots,d_l),$ defined in  (\ref{eq:tensorialconvexhull}), is a $GL_\otimes$-deformation retraction onto $\Pi(d_1,\dots,d_l),$ and it is also a proper map (Proposition \ref{prop:convisproper}). The main result of this section is Theorem \ref{thm:structurePi}. There, 
we prove that $\Pi(d_1,\dots,d_l)$ is a Hilbert cube manifold homeomorphic to $\mathcal{Q}\times\mathbb{R}^p,$ with $p$ as in (\ref{eq:pvalue}).

Determining whether or not a metrizable locally compact space is an absolute neighborhood retract ANR (see Section \ref{sec:notation} for the definition) is a fundamental step in order to prove that it is a Hilbert cube manifold. Section \ref{sec:ANRproperty} is devoted to the study of the  ANR property on $\mathcal{B}_\otimes(d_1,\dots,d_l)$. First, in Proposition \ref{prop:notconvex}, we show that  $\mathcal{B}_\otimes(d_1,\dots,d_l)$ is not closed under the Minkowski sum. Establishing, in consequence, a notable difference with other hyperspaces such as $\mathcal{K}_d$ and $\mathcal{B}(d).$  Despite this failure, in Proposition \ref{prop:multilineality}, we give conditions under which  the Minkowski sum of tensorial bodies remains in the class. Finally, in Theorem \ref{thm:ANRproperty}, we prove that  $\mathcal{B}_\otimes(d_1,\dots,d_l)$ is an absolute retract. This is done by means of exhibiting a retraction from the space of $0$-symmetric convex bodies in $\otimes_{i=1}^{l}\mathbb{R}^{d_i}$ onto the space of tensorial bodies. 

The topological structure of $\mathcal{B}_\otimes(d_1,\dots,d_l)$ is calculated in Section \ref{sec:mainresults}. Our approach consist in proving that $\mathcal{L}_\otimes(d_1,\dots,d_l)$ is homeomorphic to $\mathcal{Q}$  (see (\ref{eq:introLtensor}) above). To address this problem, we use Torunczyk's characterization of Hilbert cube manifolds \cite[Theorem 1]{Torunczyk1980} and a classical result of R. Edwards \cite[\textsection 3]{Torunczyk1980}. In this way, in Proposition \ref{prop:zetaset}, we first exhibit a $Z$-set in $\mathcal{L}_\otimes(d_1,\dots,d_l),$ then we prove that the complement of such a set is an open Hilbert cube manifold in $\mathcal{L}_\otimes(d_1,\dots,d_l).$ This allows us to show that
$$
\mathcal{L}_\otimes(d_1,\dots,d_l) \textit{ is homeomorphic to }\mathcal{Q}.
$$
This result appears as Theorem \ref{thm:maintheorem}. In Proposition \ref{prop:Ltensorpoyectivecontraible},
it is showed that the hyperspace $\Pi(d_1,\dots,d_l)\cap\mathcal{L}_\otimes(d_1,\dots,d_l)$ is an $O_\otimes$-deformation retraction of $\mathcal{L}_\otimes(d_1,\dots,d_l),$ and it is homeomorphic to $\mathcal{Q}.$

We finish the paper by exploring the topological structure of the compactum of tensorial bodies $\mathcal{B}_\otimes(d_1,\dots.,d_l)/GL_\otimes(d_1,\dots,d_l).$ Among others, we show that it is a contractible space (Proposition \ref{prop_BMtensorcontractible}). Some remarks  and open questions on the relation between the compactum  $\mathcal{B}_\otimes(d_1,\dots.,d_l)/GL_\otimes(d_1,\dots,d_l)$ and the so called Banach-Mazur compactum $\mathcal{BM}(d)$ are detailed. In Proposition \ref{prop:orbitspacePI}, we show that the orbit space
$\Pi(d_1,\dots,d_l)/GL_\otimes(d_1,\dots,d_l)$ is homeomorphic to $\mathcal{BM}(d_1)\times\cdots\times\mathcal{BM}(d_l),$ as long as the dimensions $d_i,$ $i=1,\dots,l,$ are different from each other.

\subsection{Notation and basic definitions}
\label{sec:notation}

Let $X$ be a topological space and let $A\subseteq X$ be closed. We say that $A$ is a retract of $X$ if there exists a continuous surjective map $r:X\rightarrow A$ such that $r(a)=a,$ for all $a\in A.$ Such a map $r$ is called a retraction. 
A metrizable space $X$ is called an \textit{absolute neighborhood retract} (ANR) if for every metrizable space $Z$ containing $X$ as closed subset, there exists a neighborhood $U$ of $X$ in $Z$ and a retraction $r:U\rightarrow X.$ In this case, we write $X\in ANR.$ If we can always take $U=Z,$ we say that $X$ is an \textit{absolute retract} (AR), and  we write $X\in AR.$ 
It is well-known that every closed convex subset of a Banach space is an AR \cite[Theorem 1.5.1]{vanMillbook}.

A continuous map $f:X\rightarrow Y$ between topological spaces is called \textit{proper} if for every compact subset $K\subseteq Y,$ $f^{-1}(K)$ is compact too. A proper and surjective map $f:X\rightarrow Y$  between ANR's is called \textit{cell-like} if every fiber $f^{-1}(y),$ $y\in Y,$ has the property $UV^{\infty}.$  That is, for each neighborhood $U$ of $f^{-1}(y)$ there exists a neighborhood $V\subset U$ of $f^{-1}(y)$ such that the inclusion $V\hookrightarrow U$ is homotopic to a constant map from $V$ to $U.$ In particular, if all of the fibers $f^{-1}(y)$ are contractible, then $f$ has the property $UV^{\infty}$ (see \cite[p. 91]{Chapmanbook}). Recall that a topological space is called \textit{contractible} if there exists a homotopy $H:X\times[0,1]\rightarrow X$ such that $H(\cdot,0)$ is the identity on $X$ and $H(\cdot,1)$ is a constant map.
%In particular, if each fiber is contractible, $f$ has the property $UV^{\infty}.$  \cite[Chapter XIII]{Chapmanbook} 

The Hilbert cube $\mathcal{Q}$ is the topological product $\prod_{i=1}^{\infty}[-1,1].$ A \textit{Hilbert cube manifold} (or $\mathcal{Q}$-manifold) is a separable metrizable space that admits an open cover by sets homeomorphic to open subsets of $\mathcal{Q}.$ A fundamental result on Hilbert cube manifolds asserts that $\mathcal{Q}$ is the only $\mathcal{Q}$-manifold which is compact and contractible \cite[Theorem 7.5.8]{vanMillbook}. We refer to \cite{Chapmanbook,vanMillbook,Torunczyk1980} for the fundamentals of the theory of $\mathcal{Q}$-manifolds and infinite-dimensional topology. 

A closed subset $A$ of a metric space $X$ is a $Z$\textit{-set} if 
$\{f\in C(\mathcal{Q},X): f(\mathcal{Q})\cap A=\emptyset\}$ is dense in $C(\mathcal{Q},X).$ Here, $C(\mathcal{Q},X)$ is the space of continuous maps from $\mathcal{Q}$ to $X$ endowed with the compact-open topology.

\subsection{G-spaces}
\label{sec:Gspaces}

We recall some concepts from the theory of $G$-spaces that will be used throughout the work. By a $G$\textit{-space} we mean a Tychonoff space $X$ equipped with a continuous action $G\times X\rightarrow X$ of a   topological group $G$ on $X.$ The image under the action of the pair $(g,x)$ is denoted by $gx.$
If $S$ is a subset of a $G$-space $X$ and $H\subseteq G$ is a subgroup, then $H(S):=\{hs:h\in H, s\in S\}$ denotes the $H$-saturation of $S.$ In particular, $G(x)$ denotes the orbit of $x\in X.$ As usual, by $X/G$ we denote the orbit space. We say that a subset $S\subseteq X$ is $H$-invariant if $H(S)=S.$   

A continuous map $f:X\rightarrow Y$ between $G$-spaces is called \textit{equivariant} or simply a $G$-map if $f(gx)=gf(x),$ for any $g\in G$ and $x\in X.$  A $G$-map $r:X\rightarrow A,$ $A\subseteq X,$ is called $G$\textit{-strong deformation retraction}, if there is a homotopy $H:X\times[0,1]\rightarrow X$ such that: $H_0$ is the identity on $X;$ $H_1=r;$ $H(a,t)=a$ for each $t\in[0,1]$ and $a\in A;$ and $H_t$ is a $G$-map for every $t.$  In this context, $A$ is called $G$\textit{-strong deformation retract}. For a deeper discussion of $G$-spaces, we refer the reader to \cite{Bredon}.

A G-space $X$ is called \textit{proper} (in the sense of Palais \cite{Palais1961}), if it has an open cover such that
every $x\in X$ has a neighborhood $V$ for which the set 
$\{g\in G :gS\cap V\neq\emptyset\}$ has compact closure in $G$.
We recall the definition of a \textit{slice}.
Let $X$ be a $G$-space and $H$ a closed subgroup of $G.$ An $H$-invariant
subset $S\subseteq X$ is called an $H$-\textit{slice} in $X,$ if
$G\left(S\right)$ is open in $X$ and there exists a $G$-map $f:G\left(S\right)\rightarrow G/H$ such that $S=f^{-1}\left(eH\right).$
%The saturation $G\left(S\right)$ is called a \textsl{tubular} set.
If $G\left(S\right)=X,$  $S$ is called a \textit{global
$H$-slice} of $X$ (see \textit{e.g.} \cite[p. 305]{Palais1}).

In this work, we are interested in $G$-spaces for which $G$ is a closed subgroup of the general lineal group $GL(d)$. Recall that $GL(d)$ consists of the linear isomomorphisms $T:\mathbb{R}^d\rightarrow\mathbb{R}^d.$ The topology on $GL(d)$ is the one induced by the operator norm 
$\|T\|=\text{sup}_{x\in B_2^d}\|T(x)\|.$ Here $B_2^d$ denotes the Euclidean ball associated to the standard Euclidean norm $\|\cdot\|$ on $\mathbb{R}^d.$ As usual, by $\mathbb{S}^{d-1}$ and $\langle\cdot,\cdot\rangle,$ we denote the  sphere $\partial B_2^d$ and the standard scalar product on $\mathbb{R}^d,$  respectively. 

Since we will deal with groups associated to tensor products of vector spaces, we briefly recall the following. The tensor product  $\otimes_{i=1}^{l}\mathbb{R}^{d_i}$ can be endowed with a Euclidean structure. This is the one determined by the Hilbert tensor product of Euclidean spaces, and it is induced by the scalar product $\langle\cdot,\cdot\rangle_H,$ 
\begin{equation}
\label{eq:euclideantensor}
 \left\langle x^{1}\otimes\cdots\otimes x^{l},y^{1}\otimes\cdots\otimes y^{l}\right\rangle _{H}:={\prod}_{i=1}^l\left\langle x^i,y^i\right\rangle.
\end{equation}
We simply write the definition of $\langle\cdot,\cdot\rangle_H$ on decomposable vectors $x^{1}\otimes\cdots\otimes x^{l}$ and $y^{1}\otimes\cdots\otimes y^{l}$ of $\otimes_{i=1}^{l}\mathbb{R}^{d_i}.$ However, it can be extended to the whole space by multilinearity \cite[Section 2.5]{kadisonkingrose}. The space $\otimes_{i=1}^{l}\mathbb{R}^{d_i}$ together with the scalar product $\langle\cdot,\cdot\rangle_H$ is denoted by $\otimes_{H,i=1}^{l}\mathbb{R}^{d_i}.$ Its corresponding norm and Euclidean ball are denoted by $\|\cdot\|_H$ and $B_2^{d_1,\dots,d_l},$ respectively. 
The set of decomposable vectors in $\otimes_{i=1}^l\mathbb{R}^{d_i}$ is denoted by $\Sigma_{d_1,\ldots,d_l}.$ That is,
$$
\Sigma_{d_1,\ldots,d_l}:=\{x^1\otimes\cdots\otimes x^l:x^i\in\mathbb{R}^{d_i},i=1,\ldots,l\}.
$$

By
$GL_\otimes(d_1,\ldots,d_l),$ we denote the group of linear isomorphisms on $\otimes_{H,i=1}^l\mathbb{R}^{d_i}$ sending decomposable vectors into decomposable vectors. $O_\otimes(d_1,\ldots,d_l)$ denotes the subgroup of $GL_\otimes(d_1,\ldots,d_l)$ consisting of orthogonal maps. In \cite[Section 4]{topologytensorialbodies}, it is proved that both $GL_\otimes(d_1,\ldots,d_l)$ and $O_\otimes(d_1,\ldots,d_l)$ are closed subgroups of $GL(\otimes_{i=1}^l\mathbb{R}^{d_i}).$ In particular, they are Lie groups and $O_\otimes(d_1,\ldots,d_l)$ is a maximal compact subgroup of $GL_\otimes(d_1,\ldots,d_l)$ \cite[Appendix A]{topologytensorialbodies}. To shorten notation, we simply write $GL_\otimes$ and $O_\otimes.$

Given $T\in GL_\otimes(d_1,\ldots,d_l),$ there exists $T_i\in GL(d_i),$ $i=1,\ldots,l,$ and a permutation  $\sigma$ on $\{1,\ldots,l\}$
 such that $T=(T_{1}\otimes\cdots\otimes T_{l})U_\sigma.$ Here, $U_\sigma\in GL_\otimes$ is defined in decomposable vectors as 
\begin{equation*}
%\label{eq: orthogonal perm}
U_\sigma(x^{1}\otimes\cdots\otimes x^{l})=x^{\sigma(1)}\otimes\cdots\otimes x^{\sigma(l)},
\end{equation*}
and $\sigma$ is such that $x^{\sigma(1)}\otimes\cdots\otimes x^{\sigma(l)}\in\otimes_{i=1}^l\mathbb{R}^{d_i}$ for  $x^i\in\mathbb{R}^{d_i},$ $i=1,\ldots,l.$
The subgroup of $GL_\otimes$ consisting of the maps $U_\sigma$ defined above is denoted by $\mathcal{P}.$  It is worth noticing that $\mathcal{P}\subset O_\otimes$. Indeed, the standard basis  $e_{k_i}^{d_i},$ $k_i=1,\ldots,d_i,$ of $\mathbb{R}^{d_i}$ is such that
 $U_{\sigma}(e_{k_1}^{d_1}\otimes\cdots\otimes e_{k_l}^{d_l})=e_{k_{\sigma(1)}}^{d_{\sigma(1)}}\otimes\cdots\otimes e_{k_{\sigma(l)}}^{d_{\sigma(l)}}.$ For a complete description of the properties of $\mathcal{P}$ and its relation with $GL_\otimes,$ we refer to \cite[Lemma A.1 and Proposition A.4]{topologytensorialbodies}. 

\section{Preliminaries on tensorial bodies}
\label{sec:basictensorialbodies}

We first introduce some results from convex geometry that we will be used in the work. The letters $d, d_i\geq2,$ with $i=1,\dots,l,$ will always denote natural numbers. A convex compact subset $P\subset\mathbb{R}^d$ with non-empty interior is called a \textit{convex body}. If in addition $P=-P,$ we say that $P$ is a $0$\textit{-symmetric convex body}. By $\mathcal{K}_d$ and $\mathcal{B}(d),$ we denote, respectively, the classes of convex bodies and $0$-symmetric convex bodies in $\mathbb{R}^d.$ The polar body of $P\in\mathcal{B}(d)$ is defined as $P^{\circ}=\{y\in\mathbb{R}^d:\langle x,y\rangle\leq1\text{ for all }x\in P\}.$ It is well-known that $P^{\circ}\in\mathcal{B}(d).$ The Minkowski functional associated to $P\in\mathcal{K}_d$ is defined by $g_P(x)=\text{inf}\{\lambda>0:\lambda^{-1}x\in P\},$ for every $x\in\mathbb{R}^d.$ In the case of $0$-symmetric convex bodies, the Minkowski functional provides a correspondence between norms on $\mathbb{R}^{d}$ and $0$-symmetric convex bodies. Indeed, the map sending $P\in\mathcal{B}(d)$ to its Minkowski functional $g_P(\cdot)$
is a bijection; the unit ball of $(\mathbb{R}^{d},g_P)$ is $P;$ and $g_{P^{\circ}}(x)=\left\Vert\langle \cdot,x\rangle\right\Vert,$ 
where $\langle \cdot,x\rangle$ is regarded as a linear functional from $\left(\mathbb{R}^{d},g_P\right)$
 to $\mathbb{R}.$ See \cite[Remark 1.7.8]{Schneider1993}.
The Hausdorff distance between non-empty compact sets $P,R\subset\mathbb{R}^{d}$ is defined as
\begin{equation*}
\delta^{H}(P,R):=\max\left\{ \underset{x\in P}{\sup}\left\{\underset{y\in R}{\inf}\| x-y\|\right\} ,\underset{y\in R}{\sup}\left\{\underset{x\in P}{\inf}\| y-x\|\right\} \right\},
\end{equation*}
or alternatively by $\delta^{H}(P,R)=\min\left\{ \lambda\geq0:P\subseteq R+\lambda B_{2}^d ,R\subseteq P+\lambda B_{2}^d \right\}$. For $P,R\in\mathcal{B}(d),$  the following is a well-known characterization of $\delta^{H}$: 
\begin{equation*}
%\label{eq:car Hausdorff distance}
\delta^{H}(P,R)=\underset{x\in\partial B_{2}^d} {\sup}\left|g_{P^{\circ}}\left(x\right)-g_{R^{\circ}}(x)\right|.
\end{equation*}
See \cite[Theorem 1.8.11]{Schneider1993}. Throughout the work $\mathcal{K}_d$ and $\mathcal{B}(d)$ will be regarded as metric spaces with respect to the Hausdorff distance. It is convenient to point out that in $\otimes_{i=1}^{l}\mathbb{R}^{d_i},$ the corresponding Hausdorff distance will be the one induced by the Euclidean norm $\|\cdot\|_H.$ In this case, the class of $0$-symmetric convex bodies in $\otimes_{i=1}^{l}\mathbb{R}^{d_i},$ denoted by $\mathcal{B}(\otimes_{i=1}^{l}\mathbb{R}^{d_i}),$ will be considered as a metric space with respect to the Hausdorff distance. 

For any $0$-symmetric convex body $P\subset\mathbb{R}^d$ or $P\subset\otimes_{i=1}^{l}\mathbb{R}^{d_i},$ the L\"{o}wner ellipsoid is denoted by $Low(P).$ Recall that $Low(P)$ is the ellipsoid of minimal volume containing $P$ \cite[Chapter 3]{Tomczak-Jaegermann1989}. We refer the reader to the monograph \cite{Schneider1993} for the basics about convex bodies.

\subsection{The space of tensorial bodies}
\label{sec:tensorialbodiesprev}

 By $\mathcal{B}_\otimes(d_1,\ldots,d_l)$ we denote the class of tensorial bodies in $\otimes_{i=1}^l\mathbb{R}^{d_i}.$ It consists of $0$-symmetric convex bodies $P\subset\otimes_{i=1}^l\mathbb{R}^{d_i}$ for which there exists $0$-symmetric convex bodies $P_i$ in the Euclidean space $\mathbb{R}^{d_i}$, $i=1,\ldots,l,$ such that
\begin{equation}
\label{eq:definition tensorbody}
P_{1}\otimes_{\pi}\cdots\otimes_{\pi}P_{l}\subseteq P\subseteq P_{1}\otimes_{\epsilon}\cdots\otimes_{\epsilon}P_{l}.
\end{equation}
Here, $\otimes_\pi$  is the projective tensor product of convex bodies \cite{Aubrun2006,maiteluisa2}
\begin{equation*}
P_{1}\otimes_{\pi}\cdots\otimes_{\pi}P_{l}:=\text{conv}\left\{ x^{1}\otimes\cdots\otimes x^{l}\in\otimes_{i=1}^{l}\mathbb{R}^{d_{i}}:x^{i}\in P_{i}, i=1,\ldots,l\right\},
\end{equation*}
and  $\otimes_\epsilon$ is the injective tensor product of convex bodies (with zero in the interior) \cite{Aubrun2017,maiteluisa2}
\begin{equation*}
P_{1}\otimes_{\epsilon}\cdots\otimes_{\epsilon}P_{l}:=(P_{1}^{\circ}\otimes_{\pi}\cdots\otimes_{\pi}P_{l}^{\circ})^{\circ}.
\end{equation*}
The polarity in $\otimes_{i=1}^l\mathbb{R}^{d_i}$ is the one determined by the scalar product  $\langle\cdot,\cdot\rangle_H,$ see (\ref{eq:euclideantensor}). Alternatively, the class of tensorial bodies  can be described in terms of the Minkowski functionals.  A $0$-symmetric convex body $P\subset\otimes_{i=1}^l\mathbb{R}^{d_i}$ is a tensorial body if and only if 
\begin{equation}
\label{eq:rc in gauges}
g_{P}\left(x^{1}\otimes\cdots\otimes x^{l}\right) = g_{P_{1}}\left(x^{1}\right)\cdots g_{P_{l}}\left(x^{l}\right) \text{ and }
\end{equation}
\begin{equation}
\label{eq:rc in gauges2}
g_{P^{\circ}}\left(x^{1}\otimes\cdots\otimes x^{l}\right) = g_{P_{1}^{\circ}}\left(x^{1}\right)\cdots g_{P_{l}^{\circ}}\left(x^{l}\right),
\end{equation}
for some $P_i\in\mathcal{B}(d_{i})$ and every $x^{i}\in\mathbb{R}^{d_{i}},$ $i=1,\ldots,l.$  Notice that, from the above equalities  and the bipolar theorem follow easily that $P$ is a tensorial body if and only if so is $P^\circ$. Moreover $\lambda P$ is a tensorial body, provided $\lambda\neq0$ and $P$ is a tensorial body \cite[Proposition 3.5]{tensorialbodies}.

The class of tensorial bodies was introduced in \cite{tensorialbodies}. It arises from the theory of norms on tensor products of Banach spaces. In this sense, in \cite[Proposition 3.1]{tensorialbodies}, it was proved that a norm $\alpha(\cdot)$ on the tensor product $\otimes_{i=1}^{l}(\mathbb{R}^{d_i},\|\cdot\|_i)$ is a reasonable crossnorm  if and only if its closed unit  ball $B_{(\otimes(\mathbb{R}^{d_i},\|\cdot\|_i),\alpha)}$ is such that
$$
B_{\|\cdot\|_1}\otimes_\pi\cdots\otimes_\pi B_{\|\cdot\|_l}\subseteq B_{(\otimes(\mathbb{R}^{d_i},\|\cdot\|_i),\alpha)}\subseteq B_{\|\cdot\|_1}\otimes_\epsilon\cdots\otimes_\epsilon B_{\|\cdot\|_l},
$$
where $B_{\|\cdot\|_i}\in\mathcal{B}(d_i)$ is the closed unit ball of the norm $\|\cdot\|_i,$ $i=1,\dots,l.$   In particular, $B_{(\otimes(\mathbb{R}^{d_i},\|\cdot\|_i),\alpha)}$ is a tensorial body in $\otimes_{i=1}^{l}\mathbb{R}^{d_i}$. The projective $\otimes_\pi$ and the injective $\otimes_\epsilon$ tensor products correspond with the so called projective $\pi(\cdot)$ and injective $\epsilon(\cdot)$ tensor norms, by means of the following relations:
\begin{equation}
\label{eq:relProjInjecnorm}
B_{\otimes_\pi(\mathbb{R}^{d_{i}},g_{P_{i}})}=P_{1}\otimes_{\pi}\cdots\otimes_{\pi}P_{l}\thinspace\text{ and } B_{\otimes_\epsilon(\mathbb{R}^{d_{i}},g_{P_{i}})}=P_{1}\otimes_{\epsilon}\cdots\otimes_{\epsilon}P_{l}. 
\end{equation}
Here $B$ denotes the closed unit ball of the projective and the injective tensor norms, and $P_i\subset \mathbb{R}^{d_{i}},$ $i=1,\dots,l,$ is an arbitrary $l$-tuple of $0$-symmetric convex bodies. In \cite[Theorem 3.2]{tensorialbodies}, by means of the Minkowski functional, a bijection between $\mathcal{B}_\otimes(d_1,\ldots,d_l)$ and the class of reasonable crossnorms on finite dimensions is exhibited. In consequence, the tensorial bodies are, precisely, the closed unit balls of finite-dimensional reasonable crossnorms. For the notation and a fuller treatment of tensor norms, we refer the reader to the classical monographs \cite{defantfloret,Ryan2013}. 
For a detailed discussion about the interplay between tensorial bodies and tensor norms, we refer to \cite{tensorialbodies, maiteluisa2}.

The class of tensorial bodies $\mathcal{B}_\otimes(d_1,\ldots,d_l)$ is contained in $\mathcal{B}(\otimes_{i=1}^{l}\mathbb{R}^{d_i}),$ and thus it is also a metric space with respect to the Hausdorff distance. We refer to it as \textit{the space of tensorial bodies}. Indeed, it is a $GL_\otimes$-space \cite[Section 4]{topologytensorialbodies}.  The action is given by the evaluation $T(P),$ for $T\in GL_\otimes$ and $P\in\mathcal{B}_\otimes(d_1,\ldots,d_l).$ In particular, for  
the projective $\otimes_{\pi}$ and the injective  $\otimes_{\epsilon}$ tensor product, 
the following holds: 
\begin{equation}
\label{eq:gltensor image of proj and inj tp}
T(P_{1}\otimes_{\alpha}\cdots\otimes_{\alpha}P_{l})= T_{1}(P_{\sigma(1)})\otimes_{\alpha}\cdots\otimes_{\alpha}T_{l}(P_{\sigma(l)}),\text{ }\alpha=\pi,\epsilon,
\end{equation}
for every tuple of $0$-symmetric convex bodies $P_i\subset\mathbb{R}^{d_i},$ $i=1,\ldots,l.$ In this case, we have used the representation of $T\in GL_\otimes$ as $T=(T_{1}\otimes\cdots\otimes T_{l})U_\sigma$ for some $T_i\in GL(d_i),$ $i=1,\dots,l,$ and $U_\sigma\in\mathcal{P}$ (see Section \ref{sec:Gspaces}).
Many topological properties of this space were exhibited in \cite{topologytensorialbodies}, below we summarize  those which are relevant for this work.

For every fixed tuple $P_i\in\mathcal{B}(d_i),$ $i=1,\dots,l,$ the hyperspace $\mathcal{B}_{P_1,\dots,P_l}(d_1,\ldots,d_l)$ consists of the tensorial bodies for which (\ref{eq:definition tensorbody}) hold. If $P\in\mathcal{B}_{P_1,\dots,P_l}(d_1,\ldots,d_l),$ we say that $P$ is a \textit{tensorial body with respect to} $P_1,\dots,P_l.$ 
As a consequence of \cite[Corollary 3.4]{tensorialbodies}, for every tensorial body $P$ in $\otimes_{i=1}^{l}\mathbb{R}^{d_i},$ one can construct explicitly different $l$-tuples $(P_1,\ldots,P_l)\in \mathcal{B}(d_1)\times\cdots\times\mathcal{B}(d_l)$ for which $P\in\mathcal{B}_{P_1,\dots,P_l}(d_1,\ldots,d_l).$ In this sense, to simplify the arguments in the forthcoming proofs, it is convenient to fix one of these $l$-tuples associated to each $P\in\mathcal{B}_\otimes(d_1,\ldots,d_l).$
\begin{remark}
\label{rem:Q-ifixed}
Let $P\subset\otimes_{i=1}^{l}\mathbb{R}^{d_i}$ be a tensorial body and let $P^i\in\mathcal{B}(d_i),$ $i=1,\ldots,l,$ be defined as 
%$P\subset\otimes_{i=1}^{l}\mathbb{R}^{d_i},$ \textcolor{red}{mejor explicar la construccion} an $l$-tuple $R^i\in\mathcal{B}(d_i),$ $i=1,\dots,l,$ such that $R\in\mathcal{B}_{R^1,\dots,R^l}(d_1,\dots,d_l).$ This is a consequence of \cite[Theorem 2.2]{topologytensorialbodies} and appears as \cite[Remark 2.3]{topologytensorialbodies}. The $R^i,$ $i=1,\dots,l,$ are defined as:
\begin{equation*}\label{eq: Q1 Ql}
P^{i}:=\left\{ x^{i}\in\mathbb{R}^{d_{i}}:\frac{e^{d_1}_{1}\otimes\cdots\otimes e^{d_{i-1}}_{1}\otimes x^i\otimes e^{d_{i+1}}_{1}\otimes\cdots\otimes e^{d_l}_{1}}{g_{P}(e^{d_1}_{1}\otimes e^{d_2}_{1}\otimes\cdots\otimes e^{d_l}_{1})}\in P\right\},
\end{equation*}
for $i=1,\dots,l-1,$ and for $i=l$ as $P^l:=\left\{x^l\in\mathbb{R}^{d_l}:e^{d_1}_{1}\otimes\cdots\otimes e^{d_{l-1}}_{1}\otimes x^l\in P\right\}.$ It is straightforward to check that  $P^i$ is precisely the $0$-symmetric convex body $P_i^{e_1^{d_1},e_1^{d_2},\ldots,\lambda e_1^{d_l}},$ $i=1,\ldots,l,$ $\lambda=\frac{1}{g_P(e^{d_1}_{1}\otimes e^{d_2}_{1}\otimes\cdots\otimes e^{d_l}_{1})},$ defined in \cite[(9)]{tensorialbodies}. Thus, from  \cite[Corollary 3.4]{tensorialbodies}, it  follows directly that $P$ is a tensorial body with respect to $P^1,\dots,P^l.$   %$$g_{P^i}(x^i)=\frac{g_P(e^{d_1}_{1}\otimes\cdots\otimes e^{d_{i-1}}_{1}\otimes x^i\otimes e^{d_{i+1}}_{1}\otimes\cdots\otimes e^{d_l}_{1})}{g_{P}(e^{d_1}_{1}\otimes e^{d_2}_{1}\otimes\cdots\otimes e^{d_l}_{1})},
%\text{ for any }x^i\in\mathbb{R}^{d_i},$$
%$i=1,\ldots,l-1,$ and $g_{P^l}(x^l)=g_P(e^{d_1}_{1}\otimes\cdots\otimes e^{d_{l-1}}_{1}\otimes x^l),$ for any $x^l\in\mathbb{R}^{d_l}.$ In consequence:
%$$g_P(x^1\otimes\cdots\otimes x^l)=g_{P^1}(x^1)\cdots g_{P^l}(x^l)
%$$
\end{remark}

By $\mathcal{E}_\otimes(d_1,\ldots,d_l),$ we denote the class of $0$-symmetric ellipsoids in $\otimes_{i=1}^l\mathbb{R}^{d_i}$ which also are tensorial bodies. Recall that $E\subset\otimes_{i=1}^{l}\mathbb{R}^{d_i}$ is a $0$-symmetric ellipsoid (not necessarily tensorial) if there exists a linear isomorphism $T$ on $\otimes_{i=1}^{l}\mathbb{R}^{d_i}$ such that $E=T(B_2^{d_1,\dots,d_l}).$ We usually write $\mathcal{E}_\otimes$ instead of $\mathcal{E}_\otimes(d_1,\ldots,d_l).$
% The class of convex bodies in $\otimes_{i=1}^l\mathbb{R}^{d_i}$ consisting  of $0$-symmetric ellipsoids  which also are tensorial bodies is denoted by $\mathcal{E}_\otimes(d_1,\ldots,d_l)$.
\begin{proposition}
\label{prop:resumeprop}
\cite{topologytensorialbodies} The following hold:
\begin{enumerate}
    \item The tensor products $\otimes_\pi,$ $\otimes_\epsilon$: $\mathcal{B}(d_1)\times\cdots\times\mathcal{B}(d_l)\rightarrow\mathcal{B}_\otimes(d_1,\ldots,d_l)$ are continuous with respect to the Hausdorff distance.
    
    \item The map $conv_\otimes:\mathcal{B}_\otimes(d_1,\ldots,d_l)\rightarrow\mathcal{B}_\otimes(d_1,\ldots,d_l),$ defined as 
   \begin{equation}
\label{eq:tensorialconvexhull}
    conv_\otimes(P):=conv(P\cap\Sigma_{d_1,\ldots,d_l}),
   \end{equation}
    is a $GL_\otimes$-equivariant retraction onto $
    \Pi(d_1,\dots,d_l):=\{P_{1}\otimes_{\pi}\cdots\otimes_{\pi}P_{l}:P_i\in\mathcal{B}(d_i)\}.$ 
    
    \item The map $\ell_\otimes=Low\circ conv_\otimes$ sending $P$ to %the L\"{o}wner ellipsoid 
    $Low(conv_\otimes(P))$ is a $GL_\otimes$-equivariant retraction of  $\mathcal{B}_\otimes(d_1,\ldots,d_l)$ onto  $\mathcal{E}_\otimes(d_1,\ldots,d_l).$
\end{enumerate}
\end{proposition}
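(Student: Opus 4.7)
The plan is to treat the three parts in order, reducing each to continuity and equivariance of simpler constituents. For part (1), I would write
\begin{equation*}
P_1\otimes_\pi\cdots\otimes_\pi P_l = \mathrm{conv}\bigl(\tau(P_1\times\cdots\times P_l)\bigr),
\end{equation*}
where $\tau(x^1,\ldots,x^l) = x^1\otimes\cdots\otimes x^l$ is the continuous multilinear tensor map. Hausdorff continuity of $\otimes_\pi$ then follows from three standard facts: the Cartesian product of compacta is Hausdorff-continuous in each factor, a continuous map sends Hausdorff-convergent compacta in its domain to Hausdorff-convergent images, and the closed convex hull is continuous on the hyperspace of compacta. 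For $\otimes_\epsilon$, the definition $P_1\otimes_\epsilon\cdots\otimes_\epsilon P_l = (P_1^\circ\otimes_\pi\cdots\otimes_\pi P_l^\circ)^\circ$, combined with the well-known Hausdorff continuity of polarity on $\mathcal{B}(d)$, reduces the problem to the case of $\otimes_\pi$.

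The structural step for part (2) is an explicit identification of $conv_\otimes(P)$. If $P$ is a tensorial body with respect to $P_1,\ldots,P_l$, then (\ref{eq:rc in gauges}) gives $g_P(x^1\otimes\cdots\otimes x^l) = g_{P_1}(x^1)\cdots g_{P_l}(x^l)$, so $P\cap\Sigma_{d_1,\ldots,d_l}$ consists of the decomposable vectors with $\prod_i g_{P_i}(x^i)\leq 1$. Exploiting the scaling ambiguity $x^1\otimes\cdots\otimes x^l = (\lambda_1 x^1)\otimes\cdots\otimes(\lambda_l x^l)$ for any $\lambda_i>0$ with $\prod_i\lambda_i = 1$, every such element can be rewritten as $y^1\otimes\cdots\otimes y^l$ with $y^i\in P_i$, producing the identification
\begin{equation*}
conv_\otimes(P) = \mathrm{conv}\{y^1\otimes\cdots\otimes y^l:y^i\in P_i\} = P_1\otimes_\pi\cdots\otimes_\pi P_l \in \Pi(d_1,\ldots,d_l).
\end{equation*}
This simultaneously places the image in $\Pi$ and, applied to $P\in\Pi$ itself, shows that $conv_\otimes$ is a retraction. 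Continuity is then obtained by using the canonical choice $(P^1,\ldots,P^l)$ of Remark \ref{rem:Q-ifixed}: this assignment is continuous, being a fixed slicing operation together with normalization by the continuous functional $P\mapsto g_P(e^{d_1}_1\otimes\cdots\otimes e^{d_l}_1)$, and composing with the continuous $\otimes_\pi$ from (1) yields continuity of $conv_\otimes$. Equivariance is immediate from the linearity of each $T\in GL_\otimes$ combined with $T(\Sigma_{d_1,\ldots,d_l})=\Sigma_{d_1,\ldots,d_l}$, giving $conv_\otimes(TP)=\mathrm{conv}(T(P\cap\Sigma_{d_1,\ldots,d_l}))=T(conv_\otimes(P))$.

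For part (3), the Löwner map is continuous on $\mathcal{B}(d)$ and satisfies $Low(TP)=T(Low(P))$ for any invertible linear $T$ by uniqueness of the minimal-volume ellipsoid; hence $\ell_\otimes = Low\circ conv_\otimes$ inherits continuity and $GL_\otimes$-equivariance from (2). To see that its image lies in $\mathcal{E}_\otimes$, I would invoke the identity $Low(P_1\otimes_\pi\cdots\otimes_\pi P_l) = Low(P_1)\otimes_2\cdots\otimes_2 Low(P_l)$, whose right-hand side is manifestly a tensorial ellipsoid. For the retraction property on $\mathcal{E}_\otimes$, if $E$ is a tensorial ellipsoid then the bodies $E^1,\ldots,E^l$ from Remark \ref{rem:Q-ifixed} are slices of the ellipsoid $E$, hence are themselves ellipsoids; the identification in (2) then gives $conv_\otimes(E)=E^1\otimes_\pi\cdots\otimes_\pi E^l$ and therefore $\ell_\otimes(E)=E^1\otimes_2\cdots\otimes_2 E^l = E$. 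The main obstacle is the rescaling argument identifying $P\cap\Sigma_{d_1,\ldots,d_l}$ with $\{y^1\otimes\cdots\otimes y^l:y^i\in P_i\}$, which underpins everything in parts (2) and (3); once that is secured, the rest is bookkeeping together with the known behavior of polarity and of Löwner ellipsoids under projective tensor products.
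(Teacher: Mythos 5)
This proposition is cited from \cite{topologytensorialbodies} and is given no proof in the present paper, so there is no argument here to compare yours against; I assess the reconstruction on its own terms. Parts (1) and (2) are essentially correct. The identification $conv_\otimes(P)=P_1\otimes_\pi\cdots\otimes_\pi P_l$ via rescaling of decomposable vectors — exploiting that a choice of $\lambda_i>0$ with $\prod_i\lambda_i=1$ leaves $x^1\otimes\cdots\otimes x^l$ unchanged while pushing each factor into $P_i$ whenever $\prod_i g_{P_i}(x^i)\leq 1$ — is the crux of the matter, and you handle it correctly; this is precisely the fact the paper later invokes from \cite{topologytensorialbodies} in the proofs of Propositions \ref{prop:convisproper} and \ref{prop:Ltensorpoyectivecontraible}. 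Your continuity argument through the canonical tuple $(P^1,\dots,P^l)$ of Remark \ref{rem:Q-ifixed} matches the mechanism the paper uses to argue continuity of $P\mapsto P^i$ in Theorem \ref{thm:ANRproperty}, and the equivariance claim via $T(\Sigma_{d_1,\dots,d_l})=\Sigma_{d_1,\dots,d_l}$ is fine.

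The one genuine gap is the final equality in part (3). You correctly reduce $\ell_\otimes(E)$ to $E^1\otimes_2\cdots\otimes_2 E^l$ for $E\in\mathcal{E}_\otimes$, using that the $E^i$ are ellipsoidal sections of $E$ together with the identity $Low(P_1\otimes_\pi\cdots\otimes_\pi P_l)=Low(P_1)\otimes_2\cdots\otimes_2Low(P_l)$, but the concluding $E^1\otimes_2\cdots\otimes_2 E^l=E$ is asserted without argument and is not automatic: a priori one only knows that both $E$ and $E^1\otimes_2\cdots\otimes_2E^l$ are ellipsoids sandwiched between $E^1\otimes_\pi\cdots\otimes_\pi E^l$ and $E^1\otimes_\epsilon\cdots\otimes_\epsilon E^l$, and the L\"{o}wner ellipsoid is not monotone under inclusion. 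To close the gap, use that every $E\in\mathcal{E}_\otimes$ is a Hilbertian tensor product $F_1\otimes_2\cdots\otimes_2 F_l$, hence tensorial with respect to the $F_i$; the uniqueness-up-to-scaling of representing tuples then gives $E^i=\lambda_i F_i$ with $\prod_i\lambda_i=1$, so $E^1\otimes_2\cdots\otimes_2E^l=F_1\otimes_2\cdots\otimes_2F_l=E$. A cleaner route avoids the slices entirely: $\ell_\otimes$ is $GL_\otimes$-equivariant, and since $\mathcal{E}_\otimes$ is the $GL_\otimes$-orbit of $B_2^{d_1,\dots,d_l}$ by Theorem \ref{thm:resumenthm}(3), the retraction property reduces to the single verification $\ell_\otimes(B_2^{d_1,\dots,d_l})=B_2^{d_1,\dots,d_l}$, which your identification from part (2) together with the L\"{o}wner identity gives at once.
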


\begin{theorem}
\label{thm:resumenthm}
\cite{topologytensorialbodies} Let $p=\frac{d_1(d_1+1)}{2}+\cdots+\frac{d_l(d_l+1)}{2},$ with $d_i\geq2$ for $i=1,\dots,l.$ The following hold:
\begin{enumerate}
\item $\mathcal{B}_\otimes(d_1,\ldots,d_l)$ is a closed and contractible subspace of  
$\mathcal{B}(\otimes_{i=1}^l\mathbb{R}^{d_i}).$ %of $0$-symmetric convex bodies in $\otimes_{i=1}^{l}\mathbb{R}^{d_i}$. 

\item $\mathcal{B}_\otimes(d_1,\ldots,d_l)$ is a proper $GL_\otimes$-space.

\item  $\mathcal{E}_\otimes(d_1,\ldots,d_l)$  is the $GL_\otimes$-orbit of $B_2^{d_1,\dots,d_l}.$ Furthermore, it is homeomorphic to $\mathbb{R}^p.$

\item The set  $\mathcal{L}_{\otimes}(d_1,\ldots,d_l),$ defined as $$\mathcal{L}_{\otimes}(d_1,\ldots,d_l):=\{ P\in\mathcal{B}_{\otimes}(d_1,\ldots,d_l):\ell_{\otimes}(P)=B_{2}^{d_{1},\ldots,d_{l}}\},$$ is a compact $O_\otimes$-global slice for the  $GL_\otimes$-space $\mathcal{B}_\otimes(d_1,\ldots,d_l).$ Moreover, the spaces $\mathcal{B}_\otimes(d_1,\ldots,d_l)/GL_\otimes(d_1,\ldots,d_l)$ and $\mathcal{L}_\otimes(d_1,\ldots,d_l)/O_\otimes(d_1,\ldots,d_l)$ are homeomorphic.

\item The orbit space $\mathcal{B}_\otimes(d_1,\ldots,d_l)/GL_\otimes(d_1,\ldots,d_l)$ is compact and metrizable. Indeed, the logarithm $\text{ log}\delta_{\otimes}^{BM}$ of \textit{the tensorial Banach-Mazur distance} $\delta_{\otimes}^{BM}$ is a metric on it. \textit{The tensorial Banach-Mazur distance} $\delta_{\otimes}^{BM}$ is defined as 
\begin{equation}
\label{eq:tensorial bm distance}
\delta_{\otimes}^{BM}\left(P,R\right):=\inf\left\{ \lambda\geq1:R\subseteq TP\subseteq\lambda R,\text{ for }T\in GL_{\otimes}(d_1,\dots,d_l)\right\},
\end{equation}
for $P,R\in\mathcal{B}_\otimes(d_1,\ldots,d_l).$
\end{enumerate}

\end{theorem}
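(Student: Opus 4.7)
The statement aggregates five facts, so the plan is to address them in an order that makes each step use the earlier ones and the already-listed Proposition \ref{prop:resumeprop}.

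\textbf{Closedness and contractibility (1).} For closedness, the plan is to take a sequence $P_n\to P$ in $\mathcal{B}(\otimes_{i=1}^l\mathbb{R}^{d_i})$ with $P_n\in\mathcal{B}_\otimes$, and use Remark \ref{rem:Q-ifixed} to extract canonical witnesses $P_n^1,\dots,P_n^l$. Boundedness from above (via $g_{P_n}(e_1^{d_1}\otimes\cdots\otimes e_1^{d_l})\to g_P(e_1^{d_1}\otimes\cdots\otimes e_1^{d_l})>0$) and from below (via the projective tensor product) confines each sequence $\{P_n^i\}$ to a compact subset of $\mathcal{B}(d_i)$, so up to a subsequence $P_n^i\to P^i$. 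Continuity of $\otimes_\pi$ and $\otimes_\epsilon$ (Proposition \ref{prop:resumeprop}(1)) then lets me pass the sandwich inequality (\ref{eq:definition tensorbody}) to the limit, giving $P\in\mathcal{B}_{P^1,\dots,P^l}$. For contractibility, the idea is to use the retraction $\ell_\otimes$ together with the homeomorphism $\mathcal{E}_\otimes\cong\mathbb{R}^p$ from part (3): pick a path in $\mathcal{E}_\otimes$ from $\ell_\otimes(P)$ to $B_2^{d_1,\dots,d_l}$, and glue it with a deformation that moves $P$ to $\ell_\otimes(P)$ staying inside $\mathcal{B}_\otimes$. The cleanest way to carry out the second leg is through a $GL_\otimes$-equivariant transport: for each $P$, fix $T_P\in GL_\otimes$ sending $\ell_\otimes(P)$ to $B_2^{d_1,\dots,d_l}$ (well-defined up to $O_\otimes$) and use the convex interpolation between $T_P(P)$ and $B_2^{d_1,\dots,d_l}$, pulled back by $T_P^{-1}$; the interpolation stays in $\mathcal{B}_\otimes$ because both endpoints are tensorial with respect to the same factor ellipsoids. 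The main obstacle here is that $\mathcal{B}_\otimes$ is \emph{not} closed under Minkowski sums, so one must justify that the specific interpolations used do remain tensorial.

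\textbf{Proper action (2).} It is classical that $GL(d)$ acts properly on $\mathcal{B}(d)$ (see \cite{banacamazurcompactum}). Since $GL_\otimes$ is closed in $GL(\otimes_{i=1}^l\mathbb{R}^{d_i})$ and $\mathcal{B}_\otimes$ is a $GL_\otimes$-invariant closed subset of $\mathcal{B}(\otimes_{i=1}^l\mathbb{R}^{d_i})$ (by part (1)), Palais' properness is inherited automatically by restriction.

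\textbf{Orbit $\mathcal{E}_\otimes$ and slice (3), (4).} For (3), the plan is to show that the isotropy of $B_2^{d_1,\dots,d_l}$ in $GL_\otimes$ is exactly $O_\otimes$, so that $GL_\otimes(B_2^{d_1,\dots,d_l})$ is the homogeneous space $GL_\otimes/O_\otimes$. To prove every element of $\mathcal{E}_\otimes$ lies in this orbit, I would use that a $0$-symmetric ellipsoid which is tensorial factors, via the Minkowski-functional identities (\ref{eq:rc in gauges})--(\ref{eq:rc in gauges2}), as a Hilbertian tensor product of $0$-symmetric ellipsoids in each $\mathbb{R}^{d_i}$; such a product is an element of $GL_\otimes(B_2^{d_1,\dots,d_l})$ by (\ref{eq:gltensor image of proj and inj tp}). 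The homeomorphism with $\mathbb{R}^p$ then follows from polar decomposition: using $T=(T_1\otimes\cdots\otimes T_l)U_\sigma$ and the Cartan decomposition $GL(d_i)=O(d_i)\cdot\mathrm{Sym}^+(d_i)$, one gets $GL_\otimes/O_\otimes\cong\prod_{i=1}^l\mathrm{Sym}^+(d_i)\cong\mathbb{R}^p$ with $p=\sum_i d_i(d_i+1)/2$. For (4), compactness of $\mathcal{L}_\otimes$ follows from $\mathcal{L}_\otimes\subseteq B_2^{d_1,\dots,d_l}$ together with a John-type lower bound (every $P\subset\mathrm{Low}(P)$ contains a fixed multiple of $\mathrm{Low}(P)$); closedness in $\mathcal{B}_\otimes$ is continuity of $\ell_\otimes$. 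Equivariance of $\ell_\otimes$ and $O_\otimes B_2=B_2$ give $O_\otimes$-invariance. For the global slice property I would define $f:\mathcal{B}_\otimes\to GL_\otimes/O_\otimes\cong\mathcal{E}_\otimes$ by $f(P)=\ell_\otimes(P)$; this is a $GL_\otimes$-map with $f^{-1}(B_2^{d_1,\dots,d_l})=\mathcal{L}_\otimes$, which is the definition. The homeomorphism $\mathcal{B}_\otimes/GL_\otimes\cong\mathcal{L}_\otimes/O_\otimes$ is then the standard global-slice identification.

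\textbf{Orbit space and tensorial Banach-Mazur metric (5).} Compactness and metrizability of $\mathcal{B}_\otimes/GL_\otimes$ follow from part (4): $\mathcal{L}_\otimes$ is a compact metrizable $O_\otimes$-space with $O_\otimes$ compact, so the quotient is compact metrizable. To see that $\log\delta^{BM}_\otimes$ metrizes this quotient, I would check the three axioms of a metric on orbits: non-degeneracy (if $\delta^{BM}_\otimes(P,R)=1$ then $R=TP$ for some $T\in GL_\otimes$, i.e.\ same orbit) uses compactness of the infimum set together with closedness of $GL_\otimes$; symmetry comes from applying the definition to $T^{-1}$; the triangle inequality for the logarithm follows from composing transformations. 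Finally, to identify the topology, I would note that a standard Banach-Mazur style estimate gives continuity of the orbit map $\mathcal{L}_\otimes\to(\mathcal{B}_\otimes/GL_\otimes,\log\delta^{BM}_\otimes)$ in both directions; continuity of the quotient map plus compactness forces the two topologies to coincide. The main obstacle across these steps is the contractibility in (1), which must use the tensorial structure rather than Minkowski averaging.
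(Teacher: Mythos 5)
This theorem is explicitly labeled as a citation of \cite{topologytensorialbodies}: the paper \emph{states} it as a summary of results established there and offers no proof of its own. So there is no internal proof to compare against; I'll evaluate your sketch on its own terms.

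Your plans for (2), (3), (4) and (5) are reasonable and in line with the general strategy of the cited reference (restriction of a proper action to a closed subgroup and a closed invariant subset, identifying $\mathcal{E}_\otimes$ with $GL_\otimes/O_\otimes$, compactness of $\mathcal{L}_\otimes$ from a John-type bound, and the standard slice identification $X/G\cong S/H$). The closedness argument in (1) is also fine and matches how the sandwich condition is passed to limits elsewhere in the paper.

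There is, however, a genuine gap in your contractibility argument for (1). You propose to move $P$ to $\ell_\otimes(P)$ by transporting to the slice with $T_P$ and then linearly interpolating $T_P(P)$ with $B_2^{d_1,\dots,d_l}$, claiming this stays inside $\mathcal{B}_\otimes$ ``because both endpoints are tensorial with respect to the same factor ellipsoids.'' That justification is false: $T_P(P)\in\mathcal{L}_\otimes$ means only that $\ell_\otimes(T_P(P))=B_2^{d_1,\dots,d_l}$, i.e.\ that $conv_\otimes(T_P(P))$ has L\"owner ellipsoid $B_2^{d_1,\dots,d_l}$. The associated factor bodies $\overline{T_P(P)}_i$ belong to $L(d_i)$, and are generally \emph{not} the balls $B_2^{d_i}$, so the two endpoints of your segment are tensorial with respect to \emph{different} $l$-tuples. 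Minkowski interpolation between tensorial bodies with different factor tuples can leave $\mathcal{B}_\otimes$ (this is exactly what Proposition \ref{prop:notconvex} illustrates), and Proposition \ref{prop:multilineality} only covers the case of tuples differing in a single coordinate. On top of that, $T_P$ is only well-defined up to right-multiplication by $O_\otimes$, and you would need a continuous selection $P\mapsto T_P$ to make the homotopy continuous; that requires a separate argument (a section of $GL_\otimes\to GL_\otimes/O_\otimes$). The safe route — the one the paper itself uses later for $\mathcal{L}_\otimes$ — is a two-stage homotopy: first retract onto $\Pi$ via $W(P,t)=(1-t)P+t\,conv_\otimes(P)$, which stays inside $\mathcal{B}_{P^1,\dots,P^l}$ (a convex set), and then contract $\Pi$ using the factorwise interpolation $((1-t)P_i+tB_2^{d_i})_i$ composed with $\otimes_\pi$; each step keeps a fixed factor tuple at each time, so membership in $\mathcal{B}_\otimes$ is automatic.
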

By $\mathcal{E}(d_i),$ we denote the class of $0$-symmetric ellipsoids in $\mathbb{R}^{d_i},$ $i=1,\ldots,l.$ Given any $l$-tuple of $0$-symmetric ellipsoids $E_i=T_i(B_2^{d_i}),$ $T_i\in GL(d_i)$ and $i=1,\dots,l,$ their Hilbertian tensor product $E_1\otimes_2\cdots\otimes_2 E_l$ is a tensorial body in $\otimes_{i=1}^l\mathbb{R}^{d_i}$ defined as
$$
E_1\otimes_2\cdots\otimes_2 E_l:=T_1\otimes\cdots\otimes T_l(B_{2}^{d_{1},\ldots,d_{l}}).
$$ 
It is important to notice that this definition does not depend on the selection of the maps $T_i.$ See \cite{Aubrun2006} or \cite[Section 4]{tensorialbodies} for the details and the basic properties of $\otimes_2$. 
In \cite[Corollary 4.3]{tensorialbodies}, it is proved that the class of tensorial ellipsoids $\mathcal{E}_\otimes(d_1,\ldots,d_l)$ consists only of tensor products $E_1\otimes_2\cdots\otimes_2 E_l$ of ellipsoids $E_i\in\mathcal{E}(d_i),$ $i=1,\ldots,l$. Actually, the Euclidean ball $B_2^{d_1,\dots,d_l}$ is a tensorial ellipsoid, since $B_2^{d_1,\dots,d_l}=I_1\otimes\cdots\otimes I_l(B_2^{d_1,\dots,d_l}),$ where $I_i$  is the identity map on $\mathbb{R}^{d_i},$ $i=1,\dots,l.$ In particular, it is a tensorial body with respect to the Euclidean balls $B_2^{d_1},\dots,B_2^{d_l}.$

\begin{proposition}
\label{prop:continuity of hilbert}
The tensor product $\otimes_2:\mathcal{E}(d_1)\times\cdots\times\mathcal{E}(d_l)\rightarrow\mathcal{E}_\otimes(d_1,\ldots,d_l)$ is continuous with respect to the Hausdorff distance.
\end{proposition}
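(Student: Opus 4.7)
The plan is to lift the problem from the hyperspace $\mathcal{E}(d_i)$ of $0$-symmetric ellipsoids to the cone $\mathrm{Sym}^+(d_i)$ of positive definite symmetric operators, where the operation corresponding to $\otimes_2$ is simply the (jointly continuous) operator tensor product. First I would define, for each $i$, a bijection $\Psi_i\colon \mathcal{E}(d_i)\to\mathrm{Sym}^+(d_i)$ by declaring $\Psi_i(E)$ to be the unique positive definite symmetric operator $A$ such that $g_E(x)^2=\langle Ax,x\rangle$; equivalently $E=A^{-1/2}(B_2^{d_i})$. Continuity of $\Psi_i$ follows from the uniform convergence of the gauge functions under Hausdorff convergence (a consequence of the characterization of $\delta^H$ via Minkowski functionals recalled in Section~\ref{sec:basictensorialbodies}), so that the associated quadratic forms converge in any matrix norm. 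Continuity of $\Psi_i^{-1}$ follows from the continuity of the square-root map on $\mathrm{Sym}^+(d_i)$ combined with the elementary estimate $\delta^H(TK,T'K)\leq \|T-T'\|\cdot\sup_{x\in K}\|x\|$, which shows that $T\mapsto T(B_2^{d_i})$ is continuous from $GL(d_i)$ to $\mathcal{B}(d_i)$.

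Second, I would verify by a direct computation that $\otimes_2$ corresponds under the identifications $\Psi_i$ to the operator tensor product: if $E_i=A_i^{-1/2}(B_2^{d_i})$, then
\[
E_1\otimes_2\cdots\otimes_2 E_l=(A_1\otimes\cdots\otimes A_l)^{-1/2}\bigl(B_2^{d_1,\ldots,d_l}\bigr),
\]
using the identity $(A_1\otimes\cdots\otimes A_l)^{-1/2}=A_1^{-1/2}\otimes\cdots\otimes A_l^{-1/2}$, which is a consequence of $(T_1\otimes\cdots\otimes T_l)^{-1}=T_1^{-1}\otimes\cdots\otimes T_l^{-1}$ together with the fact that the Hilbert adjoint of a tensor product of operators is the tensor product of adjoints (which in turn relies on the definition of $\langle\cdot,\cdot\rangle_H$ in \eqref{eq:euclideantensor}). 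Since the operator tensor product $(A_1,\ldots,A_l)\mapsto A_1\otimes\cdots\otimes A_l$ is multilinear, it is jointly continuous, and therefore the composition
\[
(E_1,\ldots,E_l)\longmapsto (A_1,\ldots,A_l)\longmapsto A_1\otimes\cdots\otimes A_l\longmapsto (A_1\otimes\cdots\otimes A_l)^{-1/2}(B_2^{d_1,\ldots,d_l})
\]
is continuous, which is precisely the continuity of $\otimes_2$.

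The conceptual obstacle is that the representation $E_i=T_i(B_2^{d_i})$ used in the definition of $\otimes_2$ is only unique up to the right action of $O(d_i)$, so one cannot hope to produce a continuous lift $E_i\mapsto T_i$ by an arbitrary choice of representatives. The positive definite square-root section $\Psi_i^{-1}$ supplies a canonical, continuous global lift that is compatible with tensoring; modulo this choice, every other step is either a routine continuity estimate for the Hausdorff distance or a formal identity for the Hilbertian tensor product of operators.
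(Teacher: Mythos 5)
Your proposal is correct and follows essentially the same strategy as the paper's proof: lift $\otimes_2$ to the cone of positive definite symmetric operators via a canonical parametrization of $\mathcal{E}(d_i)$ and reduce to the joint continuity of the operator tensor product (together with the $1$-Lipschitz estimate $\delta^H(T(B_2),S(B_2))\le\|T-S\|$). The only cosmetic difference is the choice of parametrization: the paper invokes the homeomorphism $\xi_i$ of \cite[Corollary 3.9]{AntonyanNatalia}, which sends $E$ to the unique positive self-adjoint $T$ with $T(B_2^{d_i})=E$ (i.e.\ the inverse square root of your $A$), so that $\otimes_2 = F\circ(\xi_1,\ldots,\xi_l)$ directly without needing the tensor square-root identity $(A_1\otimes\cdots\otimes A_l)^{-1/2}=A_1^{-1/2}\otimes\cdots\otimes A_l^{-1/2}$ that your version passes through.
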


\begin{proof}
Recall that the topology on $GL(d_i)$ is the one inherited from the space of linear maps on the Euclidean space $\mathbb{R}^{d_i},$ similarly the topology on $GL_\otimes$ is the one determined by the space of linear maps on $\otimes_{H,i=1}^l\mathbb{R}^{d_i}.$ We will need the topological representative of $\mathcal{E}(d_i)$ exhibited in the proof of \cite[Corollary 3.9]{AntonyanNatalia}. There,  $\mathcal{E}(d_i)$ is represented as the closed subset 
$\mathcal{A}_i\subset GL(d_i)$ consisiting of the positive self-adjoint maps $T_i:\mathbb{R}^{d_i}\rightarrow\mathbb{R}^{d_i}.$ The homeomorphism is given by the map $\xi_i:\mathcal{E}(d_i)\rightarrow\mathcal{A}_i$ sending $E_i\in\mathcal{E}(d_i)$ to the unique linear map $\xi_i(E_i)\in\mathcal{A}_i$ such that $\xi_i(E_i)B_2^{d_i}=E_i$, see  \cite[(3.4)]{AntonyanNatalia}. 
If we let $F:\mathcal{A}_1\times\cdots\times\mathcal{A}_l\rightarrow\mathcal{E}_\otimes(d_1,\ldots,d_l)$ be the map
$$
F(T_1,\ldots,T_l)=T_1\otimes\cdots\otimes T_l(B_{2}^{d_{1},\ldots,d_{l}}),
$$
then $\otimes_2=F\circ(\xi_1,\ldots,\xi_l).$ Thus, for any $S_i,T_i\in\mathcal{A}_i$ 
%Now since for any $S_i,T_i\in\mathcal{A}_i,$ $i=1,\ldots,l,$ the linear maps  $T_1\otimes\cdots\otimes T_l$ and $S_1\otimes\cdots\otimes S_l$ are continuous on  we have that
$$
\delta^H(F(T_1,\ldots,T_l),F(S_1,\ldots,S_l))\leq\|T_1\otimes\cdots\otimes T_l-S_1\otimes\cdots\otimes S_l\|,
$$
where $\|T_1\otimes\cdots\otimes T_l-S_1\otimes\cdots\otimes S_l\|$ denotes the usual norm of linear maps on the Euclidean space $\otimes_{H,i=1}^{l}\mathbb{R}^{d_i}.$
This shows that $F$ is a continuous map and so the same holds for
$\otimes_2$. 
\end{proof}
The following lemmas will be used later in the work. Our proof of Lemma \ref{lem:infinitedimension} follows along the same ideas of \cite[Theorem 2.2]{nadler1979}, we have included it for the sake of transparency.
\begin{lemma}
\label{lem:continuous nu}
The map $\nu:\mathcal{B}(d)\times\mathcal{B}(d)\rightarrow(0,\infty),$ $d\geq2,$ defined as $\nu(P,Q)=\sup_{z\in P}g_Q(z)$ is continuous.
\end{lemma}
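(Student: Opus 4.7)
The plan is to fix a convergent sequence $(P_n,Q_n)\to(P,Q)$ in $\mathcal{B}(d)\times\mathcal{B}(d)$ and prove $\nu(P_n,Q_n)\to\nu(P,Q)$ by introducing the telescoping split
$$
\nu(P_n,Q_n)-\nu(P,Q)=\bigl[\nu(P_n,Q_n)-\nu(P_n,Q)\bigr]+\bigl[\nu(P_n,Q)-\nu(P,Q)\bigr],
$$
so that in each bracket only one of the two arguments varies. Positivity of $\nu$ is automatic: since $P$ is $0$-symmetric with nonempty interior, it contains a ball around $0$ and hence a nonzero point $z$, on which the norm $g_Q$ is strictly positive, so $\nu(P,Q)>0$.

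For the first bracket, the key input is that Hausdorff convergence $Q_n\to Q$ forces the Minkowski functionals $g_{Q_n}$ to converge to $g_Q$ uniformly on every bounded subset of $\mathbb{R}^d$. Indeed, since $0\in\mathrm{int}(Q)$, for any $\varepsilon>0$ one has eventually $(1-\varepsilon)Q\subseteq Q_n\subseteq(1+\varepsilon)Q$, whence $\tfrac{1}{1+\varepsilon}g_Q\leq g_{Q_n}\leq\tfrac{1}{1-\varepsilon}g_Q$, and so $|g_{Q_n}-g_Q|\leq\eta_n\,g_Q$ with $\eta_n\to0$. Because $P_n\to P$ in Hausdorff distance, the union $\bigcup_n P_n$ is contained in some fixed Euclidean ball $RB_2^d$ on which $g_Q$ is bounded; therefore
$$
\bigl|\nu(P_n,Q_n)-\nu(P_n,Q)\bigr|\leq\sup_{z\in P_n}\bigl|g_{Q_n}(z)-g_Q(z)\bigr|\leq\eta_n\sup_{z\in RB_2^d}g_Q(z)\longrightarrow 0.
$$

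For the second bracket, $g_Q$ is a fixed continuous (indeed, $1$-Lipschitz in an appropriate norm) function, and one uses the standard fact that the supremum of a fixed continuous function over a Hausdorff-convergent sequence of compact sets is continuous. Concretely, pick $z_n^*\in P_n$ with $g_Q(z_n^*)=\nu(P_n,Q)$ (attained by compactness of $P_n$ and continuity of $g_Q$) and choose $w_n\in P$ with $\|z_n^*-w_n\|\leq\delta^H(P_n,P)\to0$; then $g_Q(w_n)\leq\nu(P,Q)$ and $|g_Q(z_n^*)-g_Q(w_n)|\to0$ by uniform continuity of $g_Q$ on bounded sets, yielding $\limsup_n\nu(P_n,Q)\leq\nu(P,Q)$. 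For the reverse inequality, fix $z^*\in P$ with $g_Q(z^*)=\nu(P,Q)$ and, using the other direction of Hausdorff convergence, select $\tilde z_n\in P_n$ with $\|\tilde z_n-z^*\|\to0$; continuity of $g_Q$ gives $\nu(P_n,Q)\geq g_Q(\tilde z_n)\to\nu(P,Q)$.

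There is no real obstacle here; the only care needed is to vary only one argument at a time (hence the split above), and to use that Hausdorff convergence in $\mathcal{B}(d)$ translates into uniform convergence of Minkowski functionals on bounded sets, a fact essentially packaged by the sandwich $(1-\varepsilon)Q\subseteq Q_n\subseteq(1+\varepsilon)Q$ together with $1$-homogeneity.
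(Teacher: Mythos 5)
Your argument is correct and uses essentially the same telescoping decomposition as the paper's proof, with the same key input (uniform convergence of Minkowski functionals under Hausdorff convergence, here packaged as the sandwich $(1-\varepsilon)Q\subseteq Q_n\subseteq(1+\varepsilon)Q$). The only cosmetic difference is in the bracket where only the body varies: the paper uses the quantitative estimate $|\nu(P,\bar R)-\nu(\bar P,\bar R)|\leq\delta^H(P,\bar P)\,\nu(B_2^d,\bar R)$ coming from sublinearity of $g_{\bar R}$, while you argue via maximizing points and compactness, which yields the same conclusion.
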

\begin{proof}
It can be easily checked that if $P,R,\bar{P},\bar{R}\in\mathcal{B}(d),$ then
\begin{align*}
|\nu(P,R)-\nu(\bar{P},\bar{R})|&\leq|\nu(P,R)-\nu(P,\bar{R})|+
|\nu(P,\bar{R})-\nu(\bar{P},\bar{R})|\\
&\leq\sup_{z\in P}|g_{R}(z)-g_{\bar{R}}(z)|+\delta^H(P,\bar{P})\nu(B_2^d,\bar{R})
\end{align*}
Now, for every pair of sequences  $\{P_n\},\{R_n\}\subset\mathcal{B}(d)$ converging to $P,R$ respectively, we have that $\{\nu(B_2^d,R_n)\}_n$ is bounded and that $\{g_{R_n}(\cdot)\}_n$ converges uniformly on $P$ to $g_{R}(\cdot)$ (\cite[Lemma 3.1]{topologytensorialbodies}). Thus, from the above inequalities, it follows that $\nu(P_n,R_n)$ approaches to $\nu(P,R)$ and so the map $\nu$ is continuous.
\end{proof}

\begin{lemma}
\label{lem:infinitedimension}
Let $A$ and $K$ be $0$-symmetric convex bodies in $\mathbb{R}^d$ with $A\subsetneqq K.$ Then the hyperspace $csb_{A}(K):=\{P\in\mathcal{B}(d):A\subseteq P\subseteq K\}$ is homeomorphic to $\mathcal{Q}$.
\end{lemma}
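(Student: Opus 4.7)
The plan is to realize $csb_A(K)$ as a compact, convex, metrizable, infinite-dimensional subset of a Banach space and then invoke Keller's theorem, which identifies any such set with the Hilbert cube $\mathcal{Q}$. The natural ambient Banach space here is $C(\mathbb{S}^{d-1})$, into which $\mathcal{B}(d)$ embeds as a closed convex cone via the support function map $P\mapsto h_P$, where $h_P(u)=\sup_{x\in P}\langle x,u\rangle$. This embedding is an isometry with respect to the Hausdorff distance and carries Minkowski sums into pointwise sums, so any subset of $\mathcal{B}(d)$ that is closed under Minkowski convex combinations corresponds to a genuinely convex subset of $C(\mathbb{S}^{d-1})$.

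First I would check that $csb_A(K)$ is compact, convex and metrizable. Convexity, which under the embedding amounts to closure under Minkowski convex combinations, is straightforward: if $P_1,P_2\in csb_A(K)$ and $\lambda\in[0,1]$, then $\lambda P_1+(1-\lambda)P_2$ is $0$-symmetric, contains $\lambda A+(1-\lambda)A=A$ (using convexity of $A$), and is contained in $\lambda K+(1-\lambda)K=K$; nonempty interior follows from the inclusion of $A$. Compactness follows from the Blaschke selection theorem together with the closedness in $(\mathcal{B}(d),\delta^H)$ of the conditions $A\subseteq P$ and $P\subseteq K$. Metrizability is inherited from $C(\mathbb{S}^{d-1})$.

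The core step is to verify that $csb_A(K)$ is infinite-dimensional, i.e., that its affine hull in $C(\mathbb{S}^{d-1})$ has infinite dimension. Since $A\subsetneq K$, the set $K\setminus A$ is nonempty, and for any $x\in K\setminus A$ the body
\[
P_x:=\mathrm{conv}\bigl(A\cup\{x,-x\}\bigr)
\]
lies in $csb_A(K)$ with support function $h_{P_x}(u)=\max\bigl(h_A(u),|\langle x,u\rangle|\bigr)$. Given any $n$, I would choose $x_1,\dots,x_n\in K\setminus A$ whose radial directions are sufficiently separated in $\mathbb{S}^{d-1}$ that, for each $i$, the set $\Omega_i:=\{u\in\mathbb{S}^{d-1}:|\langle x_i,u\rangle|>h_A(u)\}$ contains an open region disjoint from $\bigcup_{j\neq i}\Omega_j$. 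Evaluating a would-be linear dependence $\sum_i\alpha_i(h_{P_{x_i}}-h_A)=0$ at one such test point in $\Omega_i\setminus\bigcup_{j\neq i}\Omega_j$ forces $\alpha_i=0$ for each $i$, which establishes affine independence of $h_A,h_{P_{x_1}},\dots,h_{P_{x_n}}$.

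With these four hypotheses in place, Keller's theorem yields $csb_A(K)\cong\mathcal{Q}$ at once. The main obstacle is the infinite-dimensionality argument: constructing, for each $n$, a family of $n$ bodies in $csb_A(K)$ whose support functions are affinely independent in $C(\mathbb{S}^{d-1})$. The convex-hull construction above reduces this to a direction-separation argument on the sphere, which is the only place where the hypothesis $A\subsetneq K$ is genuinely used beyond mere inclusion.
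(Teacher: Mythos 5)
Your high-level framework matches the paper's exactly: embed $csb_A(K)$ into $C(\mathbb{S}^{d-1})$ via support functions, observe that the image is a compact convex metrizable subset of a Banach space, and invoke the dichotomy that such a set is either a finite-dimensional cube or the Hilbert cube (the paper cites Bessaga--Pe\l czy\'nski Theorem 7.1; you phrase it as Keller's theorem, which is the same tool). The compactness, convexity, and metrizability checks are correct. The step that carries the weight is ruling out finite dimensionality, and here your route diverges from the paper's: you argue that the affine hull is infinite-dimensional by exhibiting, for each $n$, affinely independent support functions $h_A, h_{P_{x_1}}, \dots, h_{P_{x_n}}$, while the paper instead embeds an $n$-cell in $csb_A(K)$ and concludes infinite covering dimension. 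Both are legitimate ways to exclude the cube case.

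However, your construction of the $x_i$ has a genuine gap. The claim that one can pick $x_1,\dots,x_n\in K\setminus A$ with ``sufficiently separated radial directions'' so that each $\Omega_i$ contains an open set disjoint from $\bigcup_{j\ne i}\Omega_j$ is not justified, and direction separation alone does not deliver it. Two obstructions must be addressed. First, if $K$ only barely exceeds $A$ (e.g.\ $K=(1+\varepsilon)A$ with small $\varepsilon$), the attainable directions and magnitudes in $K\setminus A$ are tightly constrained, and the sets $\Omega_i$ are overlapping caps centred near a common direction; it is perfectly possible for one $\Omega_i$ to be covered by the union of its neighbours, or even for $\Omega_i\subset\Omega_j$ if one chooses magnitudes carelessly. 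Second, and more subtly, $\Omega_x$ does not shrink to a point as $x$ approaches $\partial A$ when $A$ has corners: if $x\to z\in\partial A$ and $z$ is a non-smooth point, the limiting $\Omega_x$ is the (possibly large) set of outer normals in the normal cone of $A$ at $z$, so one cannot simply take $|x_i|$ small to get narrow disjoint caps. A correct argument must fix a ball $B(x_0,r)\subset\operatorname{int}(K)\setminus A$, take all $x_i$ in that ball, and either (i) replace the ``pairwise private region'' demand by the weaker inductive one (pick a test point for $\Omega_1$ outside $\bigcup_{j>1}\Omega_j$, deduce $\alpha_1=0$, recurse), or (ii) control the geometry of the caps more carefully than separating directions. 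Note that the paper's $n$-cell construction is essentially a careful version of exactly this step: the points are chosen on segments $C_j$ in a 2-plane, the planar region $T$ is chosen so that chords between consecutive $C_j$'s miss $A$ (Claim~1), and injectivity of $(z_1,\dots,z_n)\mapsto\operatorname{conv}(\{\pm z_j\}\cup A)$ is argued via extreme points. So your sketch needs the analogous geometric care before the affine-independence claim is established.
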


\begin{proof}

First, let us observe that  $csb_A(K)$ is compact. Indeed, by the Blaschke selection theorem, any sequence $\{R_n\}_n\subset csb_{A}(K)$ has a subsequence $\{R_{n_i}\}$ converging to a compact convex set $R.$ Clearly, $R$ must be $0$-symmetric and $A\subseteq R\subseteq K.$ Therefore $R\in csb_{A}(K)$ and the hyperspace is compact.

On the other hand,  the support function provides an affine embedding of $csb_{A}(K)$ into $C(\mathbb{S}^{d-1}),$ the space of continuous functions on the sphere $\mathbb{S}^{d-1}=\partial B_2^d.$   By the properties of the Minkowski sum, the image of $csb_{A}(K)$ under this embedding is convex. 

Hence, $csb_{A}(K)$ is homeomorphic to a compact convex subset of a Banach space. Therefore, by \cite[Theorem 7.1]{BessagaPelczynski}, $csb_{A}(K)$ is homeomorphic to either $\mathcal{Q}$ or to $[0,1]^n$  for some natural number $n$.
  Since every cube $[0,1]^n$ has covering dimension equal to $n$,  in order to complete the proof it is enough to show that  $csb_{A}(K)$ does not have finite covering dimension.
To that end, we will show that $csb_{A}(K)$ contains an $n$-cell for every $n\geq 1,$ the result then will follow from \cite[Ch. 3, Proposition 1.5]{PearsAR}).

Since $A\neq K,$ there exists $x_0\in\text{int}(K)\setminus A.$ Let $r_0>0$ be such that the closed unit ball  $B[x_0,r_0]:=x_0+r_0B_2^d\subset\text{int}(K)\setminus A$ and $2r_0B_2^d\subset A$. Let us denote by $p(x_0,A)\in A$  the point such that $\text{dist}(x_0,A)=\|x_0-p(x_0,A)\|,$ and let $$u(x_0,A)=\frac{x_0-p(x_0,A)}{\text{dist}(x_0,A)}.$$
 It is known that $H,$  the hyperplane through $p(x_0,A)$ orthogonal to $u(x_0,A),$ supports $A$ (\cite[Lemma 1.3.1]{Schneider1993}. 
Consider any fixed  $v\in H$ with $v\neq p(x_0,A),$ %pointing from $p(x_0,A),$ 
and let $\mathcal{V}$ be the 2-dimensional plane through $p(x_0,A)$ generated by $v-p(x_0,A)$ and $u(x_0,A).$ Namely, $$\mathcal{V}=\{p(x_0,A)+s(v-p(x_0,A))+tu(x_0,A): s,t\in\mathbb{R}\}.$$
Notice that for every $0<r<r_0,$ the 2-dimensional closed disk $D=B[x_0,r]\cap\mathcal{V}\subset K$  is separated from the section $A\cap \mathcal{V}$ by $H\cap\mathcal{V}.$ Let $y_N=x_0+ru(x_0,A)\in D.$ 

\textbf{\textit{Claim 1}.} There exist $\varepsilon>0$  such that for any pair of points $z, y\in B[y_N,\varepsilon]\cap \partial D$, the line determined by $z$ and $y$ does not intersect $A$.
%% SE PUEDE PONER úNICAMENTE A

Suppose that the opposite occurs, then for every $\varepsilon>0,$ there exists $y_{\varepsilon}, z_{\varepsilon}\in B[y_N,\varepsilon]\cap \partial D$ for which $L_\varepsilon\cap A\neq\emptyset,$  where $L_\varepsilon$ denotes the line determined by $z_{\varepsilon}$ and $y_{\varepsilon}$. Pick a point $a(\varepsilon)\in L_\varepsilon\cap A$, and let $L$ be the line in $\mathcal V$ which is tangent to $D$ at the point $y_N$. Also, denote by $\Lambda$ the line generated by $p(x_0,A)$ and $v$ (namely, $\Lambda=H\cap \mathcal V)$. Observe that  $L$ and $\Lambda$ are parallel lines and both are completely contained in the plane $\mathcal V$.  Since $L_{\varepsilon}$ intersects $A\cap \mathcal V$, and $A\cap \mathcal V$ is contained in the half plane determined by $\Lambda$ that does not contain the pair $\{y_\varepsilon, z_\varepsilon\}$, we also have that $L_\varepsilon\cap \Lambda\neq \emptyset$. Let us denote by $b(\varepsilon)$ the intersection point of these two lines.
Consider the triangle in $\mathcal{V}$ with vertices $y_{\varepsilon},$ $p(x_0,A)$ and $a(\varepsilon)$. Observe that $b(\varepsilon)$ lies in the segment with extremes $y_{\varepsilon}$ and $a(\varepsilon)$ and therefore
\begin{align*}
     diam(A)&\geq \|a(\varepsilon)-p(x_0,A)\|
    \geq \|a(\varepsilon)-y_\varepsilon\|-\|y_{\varepsilon}-p(x_0,A)\|\\
    &\geq \|a(\varepsilon)-y_\varepsilon\|-(\|y_{\varepsilon}-x_0\|+\|x_0-p(x_0,A)\|) \\
       &\geq \|b(\varepsilon)-y_\varepsilon\|-(r+\text{dist}(x_0,A)) 
\end{align*}
%\begin{align*}
    % diam(A)&\geq \|a(\varepsilon)-p(x_0,A)\|\\
    %&\geq \bigg{|}\|a(\varepsilon)-y_\varepsilon\|-\|y_{\varepsilon}-p(x_0,A)\|\bigg{|}\\
    % &\geq \|a(\varepsilon)-y_\varepsilon\|-\|y_{\varepsilon}-p(x_0,A)\|\\
    % &\geq \|a(\varepsilon)-y_\varepsilon\|-(\|y_{\varepsilon}-x_0\|+\|x_0-p(x_0,A)\|) \\
    %   &\geq \|a(\varepsilon)-y_\varepsilon\|-(r+dist(x_0,A)) \\
    %  &\geq \|b(\varepsilon)-y_\varepsilon\|-2dist(x_0,A).
% \end{align*}
Notice that if $\varepsilon$ approaches to $0$, then $\|b(\varepsilon)-y_\varepsilon\|$ diverges to $\infty$.
This contradicts the fact that $A$ is compact and therefore the claim is proved.

%\textbf{\textit{Claim 1}.} There exists an arc $M\subset\partial D$ (here, $\partial D$ is  the boundary of $D$ relative to $\mathcal{V}$)  
%containing $y_N$ in its relative interior such that for each $y\in M,$ the tangent $L_y$ to $D$ through $y$ does not intersect $A\cap\mathcal{V}$. 

%Suppose that the opposite occurs, then for every $\varepsilon>0,$ there exists $y\in\partial D$ for which $\|y-y_N\|<\varepsilon$ and $L_y\cap A\cap \mathcal{V}\neq\emptyset.$ Let $a(y,\varepsilon)\in L_y\cap A\cap \mathcal{V}.$ Since, the  line through $p(x_0,A)$ and $v$ is parallel to $L_{y_N},$ then, for $\varepsilon$ small enough, it is also almost parallel to $L_y.$ Hence,  the triangle in $\mathcal{V}$ with vertices $y,$ $p(x_0,A)$ and $a(y,\varepsilon)$ can be choosen (for an appropiate $\varepsilon$) in a way that  $\|p(x_0,A)-a(y,\varepsilon)\|>\text{diam}(A)$, which is a contradiction. This proves the claim.

Let $\varepsilon\in (0, r)$ be as in Claim 1, and let us denote by $T$  the planar region determined by the arc $\partial D\cap B[y_N, \varepsilon]$ and the chord through its extreme points. 
Then, for any integer $n\geq3,$ there exists an integer $m=m(n)>n$ such that for some regular $m$-sided polygon $P_{m}$ with sides $S_1,\ldots,S_{m}$ inscribed in $D,$  there are (at least) $n$  consecutive sides contained in $T.$ Let us suppose, without loss of generality, that the sides $S_1,\ldots,S_{n}$ are contained in $T.$ Next, for every $j=1,\ldots,n,$ let $C_j$ be the segment $[b_j,c_j]:=\{tc_j+(1-t)b_j:t\in[0,1]\},$ where $b_j$ is the middle point of $S_j,$ and $c_j$ is the intersection of $\partial D$ with the ray $R_j=\{sb_j+(1-s)x_0:s\geq1\}$ (see Figure \ref{fig:auximage}). 
By our selection of $T,$ if $j=1,\ldots,n-1,$ $z\in C_{j}$ and $w\in C_{j+1},$ then the line $\mathcal{W}=\{z+t(z-w):t\in\mathbb{R}\}$ is completely contained in $\mathcal{V}\setminus A.$ 
\begin{figure}[h]
\scalebox{0.2}{\includegraphics{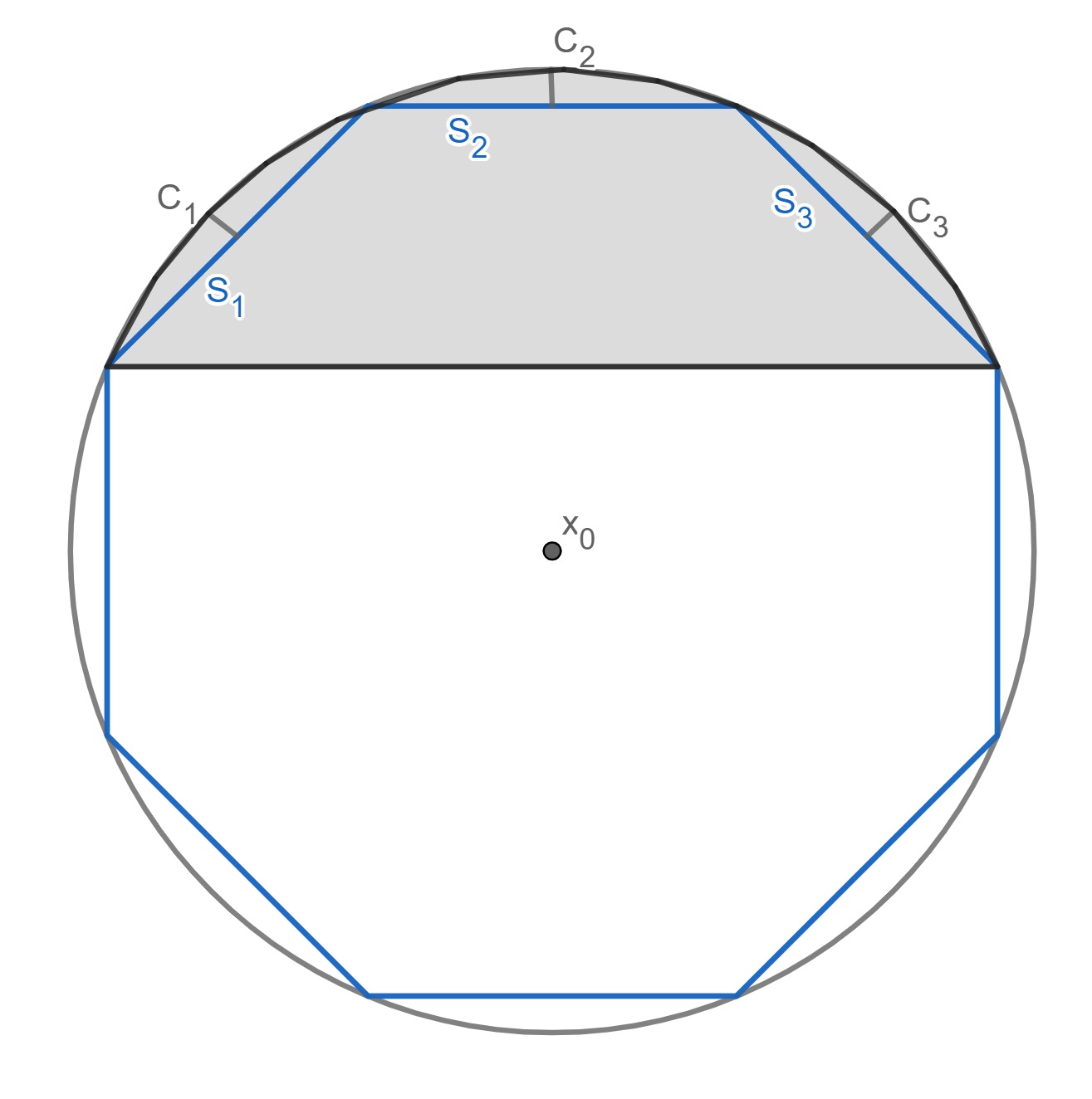}}
\centering
\caption{Construction of Lemma \ref{lem:infinitedimension}. Here, the sides  $S_{1},S_{2},S_{3}$ of the  polygon $P_8$   and segments $C_1,C_2,C_3$ are contained in the shaded region $T.$}
\label{fig:auximage}
\end{figure}

\textbf{\textit{Claim 2}.} The map $G:C_{1}\times\cdots\times C_{n}\rightarrow csb_{A}(K),$ sending the tuple $(z_1,\ldots,z_n)$ to $\text{conv}(\{\pm z_1,\ldots,\pm z_n\}\cup A)$ is a homeomorphism of the $n$-cell $C_{1}\times\cdots\times C_{n}$ into $csb_{A}(K).$ 

Clearly, $G(z_1,\ldots,z_n)\subseteq K$ is a $0$-symmetric convex body containing $A.$ In addition, it is not difficult to check that
$$
\delta^H(G(z_1,\ldots,z_n),G(w_1,\ldots,w_n))\leq\sum_{j=1}^{n}\|z_i-w_i\|.
$$
Hence, $G$ is a continuous map. Moreover, since $G$ is defined on a compact set, it must be closed.  To prove that $G$ is one-to-one, notice that if $G(z_1,\ldots,z_n)=G(w_1,\ldots,w_n),$ then $G(z_1,\ldots,z_n)\cap\mathcal{V}=G(w_1,\ldots,w_n)\cap\mathcal{V}$ and therefore their extreme points coincide.
The  fact that $2rB_2^d\subset 2r_0B_2^d\subset A$, in  combination with our construction, guarantees that $\{z_1,\dots, z_m\}$ and $\{w_1,\dots,w_m\}$ are  extreme points of $G(z_1,\ldots,z_n)\cap\mathcal{V}$ and $G(w_1,\ldots,w_n)\cap\mathcal{V}$, respectively. Thus, $z_j=w_j$ for every $j=\{1,\dots, n\}$ and therefore  $G$ is a homeomorphism, as desired.

 Notice that Claim 2 implies that  $csb_{A}(K)$ contains a $n$-cell, for every $n\geq3$. This proves that $csb_{A}(K)$ does not have finite covering dimension and therefore the proof is complete. 

%\textcolor{blue}{Since, by our construction the points $z_j$ and $w_j,$ $j=1,\ldots,n,$ are extreme points of $G(z_1,\ldots,z_n)\cap\mathcal{V}$ and $G(w_1,\ldots,w_n)\cap\mathcal{V}$ respectively, then each $z_j=w_j$ and $G$ is a homeomorphism as desired.
%Notice that we have shown that $csb_{A}(K)$ contains a $n$-cell, for every $n\geq3$. Therefore, $csb_{A}(K)$ does not have finite covering dimension and the proof is finished.}

\end{proof}

\section{A hyperspace associated to $\otimes_\pi$}
\label{sec:relatedhyperspaces}

In this section, we calculate the topological structure of the space $\Pi(d_1,\dots,d_l)$ consisting of the tensor products $P_{1}\otimes_{\pi}\cdots\otimes_{\pi}P_{l}$ of $0$-symmetric convex bodies $P_i\subset\mathbb{R}^{d_i},$ $i=1,\dots,l.$ To this end, we go further into the properties of the $GL_\otimes$-map  $conv_\otimes:\mathcal{B}_\otimes(d_1,\dots,d_l)\rightarrow\Pi(d_1,\dots,d_l),$ defined in Proposition \ref{prop:resumeprop}.

Let us denote by $H$  the closed subgroup of $GL(d_1)\times\cdots\times GL(d_l)$ defined as  
$$
H:=\{(\lambda_1I_{d_1},\ldots,\lambda_lI_{d_l}): \lambda_1\cdots\lambda_l=1 \text{ and each }\lambda_i>0 \}.
$$ 
Here $I_{d_i}$ denotes the identity map of $\mathbb{R}^{d_i},$ $i=1,\dots,l.$ If we consider the natural action of each $GL(d_i)$ on $\mathcal{B}(d_i),$ then 
the evaluation $(T_1(P_1),\ldots,T_l(P_l)),$ $T_i\in GL(d_i)$ and $P_i\in\mathcal{B}(d_i),$ $i=1,\dots,l,$ defines a continuous action of $GL(d_1)\times\cdots\times GL(d_l)$ on $\mathcal{B}(d_1)\times\cdots\times\mathcal{B}(d_l).$ As a consequence, its restriction to  $H$ determines a continuous action of $H$ on $\mathcal{B}(d_1)\times\cdots\times\mathcal{B}(d_l).$ 
In a similar way, in the subspace $\mathcal{E}(d_1)\times\cdots\times\mathcal{E}(d_l),$ consisting of $l$-tuples of $0$-symmetric ellipsoids, the restriction of the action on $\mathcal{B}(d_1)\times\cdots\times\mathcal{B}(d_l)$ also induces a  natural action of $H$ on $\mathcal{E}(d_1)\times\cdots\times\mathcal{E}(d_l).$

We now exhibit the relation between the $H$-orbits of the  $l$-tuples  $(P_1,\dots,P_l)\in\mathcal{B}(d_1)\times\cdots\times\mathcal{B}(d_l)$ and $\otimes_\pi.$ 
By \cite[Proposition 5]{maiteluisa2}, for every  $l$-tuple $P_i\in\mathcal{B}(d_i),$ $i=1,\dots,l,$ we have that 
 \begin{align}
\label{eq:projectivelambda}
\lambda_1P_1\otimes_\pi\cdots\otimes_\pi\lambda_lP_l=(\lambda_1\cdots\lambda_l)P_1\otimes_\pi\cdots\otimes_\pi P_l
=P_1\otimes_\pi\cdots\otimes_\pi P_l,
\end{align}
as long as each $\lambda_i>0$ and $\lambda_1\cdots\lambda_l=1.$ 
Hence, $\lambda_1P_1\otimes_\pi\cdots\otimes_\pi\lambda_lP_l$ belongs to $\otimes_\pi^{-1}(P_{1}\otimes_{\pi}\cdots\otimes_{\pi}P_{l}).$ Conversely, if $(R_1,\ldots,R_l)\in\otimes_\pi^{-1}(P_{1}\otimes_{\pi}\cdots\otimes_{\pi}P_{l}),$ then there exists real numbers $\lambda_i>0$ such that $\lambda_1\cdots\lambda_l=1$ and $R_i=\lambda_iP_i,$ $i=1,\ldots,l$ \cite[Proposition 3.6]{tensorialbodies}.
These inclusions prove that
\begin{equation}
\label{eq:inverseimage}
\otimes_\pi^{-1}(P_{1}\otimes_{\pi}\cdots\otimes_{\pi}P_{l})=\{(\lambda_1P_1,\dots,\lambda_lP_l):\lambda_1\cdots\lambda_l=1 \text{ and each }\lambda_i>0\}.
\end{equation}
In consequence,  $\otimes_\pi^{-1}(P_{1}\otimes_{\pi}\cdots\otimes_{\pi}P_{l})$ is the $H$-orbit of each $l$-tuple  $(P_1,\dots,P_l)\in\mathcal{B}(d_1)\times\cdots\times\mathcal{B}(d_l).$  
To shorten notation, we usually write $\mathcal{B}_\otimes$ and $\Pi$ instead of $\mathcal{B}_\otimes(d_1,\ldots,d_l)$ and $\Pi(d_1,\ldots,d_l).$

\begin{proposition} The following hold:
\label{prop:convisproper}
\begin{enumerate}
\item The $GL_\otimes$-equivariant retraction $conv_\otimes$ is proper. 
\item $\Pi(d_1,\ldots,d_l)$ is a $GL_\otimes$-strong deformation retract of $\mathcal{B}_\otimes(d_1,\ldots,d_l).$
\end{enumerate}
\end{proposition}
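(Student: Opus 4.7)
My plan is to prove the two parts using the sandwich characterization of tensorial bodies~(\ref{eq:definition tensorbody}), the normalized decomposition provided by Remark~\ref{rem:Q-ifixed}, and the continuity of the tensor products from Proposition~\ref{prop:resumeprop}(1).

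\textbf{Part (1).} Given a compact $K\subseteq\Pi$ and a sequence $\{P_n\}\subseteq conv_\otimes^{-1}(K)$, after passing to a subsequence I may assume $R_n:=conv_\otimes(P_n)\to R\in K$. Applying Remark~\ref{rem:Q-ifixed} to each $R_n\in\Pi$ yields a tuple $(R_n^1,\ldots,R_n^l)$ with $R_n=R_n^1\otimes_\pi\cdots\otimes_\pi R_n^l$; the explicit formula for each $R^i$ in terms of $g_R$ makes the map $R\mapsto(R^1,\ldots,R^l)$ continuous. Since $R_n=conv(P_n\cap\Sigma_{d_1,\ldots,d_l})\subseteq P_n$, one checks $P_n\cap\Sigma_{d_1,\ldots,d_l}=R_n\cap\Sigma_{d_1,\ldots,d_l}$, and hence $g_{P_n}$ and $g_{R_n}$ agree on decomposable vectors. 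Thus both (\ref{eq:rc in gauges}) and (\ref{eq:rc in gauges2}) transfer from $R_n$ to $P_n$ with the same tuple, giving $P_n\subseteq R_n^1\otimes_\epsilon\cdots\otimes_\epsilon R_n^l$. By Proposition~\ref{prop:resumeprop}(1) these upper bounds converge in the Hausdorff metric, so $\{P_n\}$ lies in a compact region of $\mathcal{B}(\otimes_{i=1}^l\mathbb{R}^{d_i})$, while $P_n\supseteq R_n\to R$ provides a uniform lower bound on the inradius. Blaschke's selection theorem produces a subsequential limit $P$; closedness of $\mathcal{B}_\otimes$ in $\mathcal{B}(\otimes_{i=1}^l\mathbb{R}^{d_i})$ (Theorem~\ref{thm:resumenthm}(1)) and continuity of $conv_\otimes$ then force $P\in conv_\otimes^{-1}(K)$.

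\textbf{Part (2).} Since $conv_\otimes$ is already a $GL_\otimes$-equivariant retraction onto $\Pi$, only a suitable homotopy must be exhibited. Straight-line Minkowski interpolation $(1-t)P+t\,conv_\otimes(P)$ would leave the class (as Proposition~\ref{prop:notconvex} will attest), so the plan is to interpolate at the level of Minkowski functionals: for $t\in[0,1]$, set
\[
\Phi(P,t):=\bigl\{x\in\otimes_{i=1}^l\mathbb{R}^{d_i}:(1-t)g_P(x)+tg_{conv_\otimes(P)}(x)\leq 1\bigr\}.
\]
The interpolated functional is symmetric, positively homogeneous, convex, and positive off the origin, so $\Phi(P,t)\in\mathcal{B}(\otimes_{i=1}^l\mathbb{R}^{d_i})$. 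If $P$ is tensorial with respect to $P_1,\ldots,P_l$, then $conv_\otimes(P)=P_1\otimes_\pi\cdots\otimes_\pi P_l\subseteq P\subseteq P_1\otimes_\epsilon\cdots\otimes_\epsilon P_l$, so $g_P\leq g_{conv_\otimes(P)}$ and the interpolation is sandwiched between these two functionals, giving $P_1\otimes_\pi\cdots\otimes_\pi P_l\subseteq\Phi(P,t)\subseteq P_1\otimes_\epsilon\cdots\otimes_\epsilon P_l$, whence $\Phi(P,t)\in\mathcal{B}_\otimes$ by (\ref{eq:definition tensorbody}). The endpoints $\Phi(P,0)=P$ and $\Phi(P,1)=conv_\otimes(P)$, and the fixing condition $\Phi(R,t)=R$ for $R\in\Pi$ (because $conv_\otimes(R)=R$), are immediate. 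Continuity of $\Phi$ follows from the continuity of $P\mapsto g_P$ and of $conv_\otimes$ combined with the Minkowski-functional correspondence, and $GL_\otimes$-equivariance of each $\Phi_t$ follows from $g_{TP}(x)=g_P(T^{-1}x)$ together with the equivariance of $conv_\otimes$.

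The main obstacle is Part~(1), where one must choose preimages of the highly non-injective map $\otimes_\pi$ in a manner that depends continuously on the target; the canonical normalized representatives from Remark~\ref{rem:Q-ifixed} resolve this. For Part~(2), the failure of naive straight-line interpolation is circumvented by interpolating Minkowski functionals, an operation that respects the sandwich~(\ref{eq:definition tensorbody}) and therefore preserves the class of tensorial bodies.
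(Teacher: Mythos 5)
Part (1) of your proof is correct and follows a route close in spirit to the paper's. You use the normalized tuple from Remark~\ref{rem:Q-ifixed}, note that $P_n\cap\Sigma_{d_1,\ldots,d_l}=conv_\otimes(P_n)\cap\Sigma_{d_1,\ldots,d_l}$ so that $P_n$ is sandwiched by $R_n^1\otimes_\pi\cdots\otimes_\pi R_n^l$ and $R_n^1\otimes_\epsilon\cdots\otimes_\epsilon R_n^l$ with the same tuple as $R_n=conv_\otimes(P_n)$, and then invoke continuity of $R\mapsto R^i$ and of $\otimes_\epsilon$ to bound the sequence. The paper's argument instead uses the quantitative inclusion $P^1\otimes_\epsilon\cdots\otimes_\epsilon P^l\subseteq\bigl(\prod_{i<l}d_i\bigr)P^1\otimes_\pi\cdots\otimes_\pi P^l$ together with continuity of the auxiliary map $\nu$ from Lemma~\ref{lem:continuous nu} to get boundedness; the two routes accomplish the same thing, and yours avoids the $\nu$ machinery at the modest price of invoking continuity of the map $R\mapsto(R^1,\ldots,R^l)$ (which the paper establishes elsewhere).

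Part (2) reaches a correct conclusion, and the homotopy $\Phi(P,t)$ obtained by interpolating Minkowski functionals is a valid $GL_\otimes$-strong deformation retraction. However, your motivating claim is wrong: you assert that the straight-line Minkowski interpolation $(1-t)P+t\,conv_\otimes(P)$ ``would leave the class, as Proposition~\ref{prop:notconvex} will attest.'' That misreads Proposition~\ref{prop:notconvex}, which only shows that $\mathcal{B}_\otimes$ is not closed under Minkowski sums of \emph{arbitrary} tensorial bodies (the counterexample there uses two bodies tensorial with respect to \emph{different} tuples). Here $P$ and $conv_\otimes(P)$ are both tensorial with respect to the \emph{same} tuple $(P^1,\ldots,P^l)$ from Remark~\ref{rem:Q-ifixed}, and the subspace $\mathcal{B}_{P^1,\ldots,P^l}(d_1,\ldots,d_l)$ is convex; this is exactly the content of the paper's Proposition~\ref{prop:multilineality} (the case $R_j=P_j$) and of \cite[Proposition 3.4]{topologytensorialbodies}. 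Consequently the paper's own proof of Part (2) uses precisely the straight-line homotopy $W(P,t)=(1-t)P+t\,conv_\otimes(P)$ that you dismissed. Your functional-interpolation homotopy is correct but unnecessary, and the justification you gave for preferring it is false; if you keep $\Phi$, the sentence citing Proposition~\ref{prop:notconvex} must be removed or corrected.
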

\begin{proof}
1. We show that for any compact subset $Y\subseteq\Pi,$  $conv_\otimes^{-1}(Y)$ is compact too.  To this end, let $\{P_n\}_n$ be a sequence in $conv_\otimes^{-1}(Y)$,  we will see that it contains a convergent subsequence. By the compactness of $Y,$ there exists a subsequence $\{P_{n_k}\}$ such that $conv_\otimes(P_{n_k})$ converges to some $P\in Y.$  Now, from the triangle inequality we have
\begin{equation}
\label{eq:convtensorpropia0}
\delta^H(P_{n_k},P)\leq\delta^H(P_{n_k},conv_\otimes(P_{n_k}))+\delta^H(conv_\otimes(P_{n_k}),P).
\end{equation}
On the other hand, if $P_{n_k}^i,$ $i=1,\dots,l,$ are the convex bodies associated to $P_{n_k}$ in Remark \ref{rem:Q-ifixed}, then $P_{n_k}$ and $conv_\otimes(P_{n_k})$ belong to $\mathcal{B}_{P_{n_k}^1,\dots,P_{n_k}^l}(d_1,\dots,d_l),$ 
and, from (\ref{eq:definition tensorbody}), we have 
\begin{align}
\label{eq:convtensorpropia1}
\delta^H(P_{n_k},conv_\otimes(P_{n_k}))\leq\delta^H(P_{n_k}^1\otimes_\pi\cdots\otimes_\pi P_{n_k}^l,P_{n_k}^1\otimes_\epsilon\cdots\otimes_\epsilon P_{n_k}^l).
\end{align}
 Additionally,  from the relations between the projective and the injective norm exhibited in \cite[Proposition 2.4]{maite} combined with the identities in (\ref{eq:relProjInjecnorm}), we get
$$
P_{n_k}^1\otimes_\pi\cdots\otimes_\pi P_{n_k}^l\subset P_{n_k}^1\otimes_\epsilon\cdots\otimes_\epsilon P_{n_k}^l\subset\left(\prod_{i=1}^{l-1}d_i\right)P_{n_k}^1\otimes_\pi\cdots\otimes_\pi P_{n_k}^l,
$$
and so 
\begin{align}
\label{eq:convtensorpropia2}
\delta^H(P_{n_k}^1\otimes_\pi\cdots\otimes_\pi P_{n_k}^l,&P_{n_k}^1\otimes_\epsilon\cdots\otimes_\epsilon P_{n_k}^l)\leq\left(\prod_{i=1}^{l-1}d_i-1\right)\nu(P_{n_k}^1\otimes_\pi\cdots\otimes_\pi P_{n_k}^l,B_2^{d_1,\dots,d_l})\nonumber\\
&\overset{*}{=}\left(\prod_{i=1}^{l-1}d_i-1\right)\nu(conv_\otimes(P_{n_k}),B_2^{d_1,\dots,d_l}).
\end{align}
(*) Folllows from the fact that $conv_\otimes$ is constant on $\mathcal{B}_{P_{n_k}^1,\dots,P_{n_k}^l}(d_1,\dots,d_l)$  \cite[Proposition 4.4]{topologytensorialbodies}, and thus
$conv_\otimes(P_{n_k})=P_{n_k}^1\otimes_\pi\cdots\otimes_\pi P_{n_k}^l.$

Therefore, from the convergence of $conv_\otimes(P_{n_k})$ to $P$, the continuity of $\nu$ (Lemma \ref{lem:continuous nu}), and the inequalities (\ref{eq:convtensorpropia0}), (\ref{eq:convtensorpropia1}) and (\ref{eq:convtensorpropia2}), it follows that $\{P_{n_k}\}_{n_k}$ must be bounded. Hence, by the Blaschke selection theorem \cite[Theorem 1.8.6]{Schneider1993},  we may suppose (without lost of generality) that $\{P_{n_k}\}_{n_k}$ converges to some $0$-symmetric compact convex set $R\subset\otimes_{i=1}^l\mathbb{R}^{d_i}.$ Indeed, since $conv_\otimes(P_{n_k})\subseteq P_{n_k},$ then $P\subset R$ and so $R$ is a $0$-symmetric convex body. Finally, since $\mathcal{B}_\otimes(d_1,\ldots,d_l)$ is closed in $\mathcal{B}(\otimes_{i=1}^l\mathbb{R}^{d_i})$ 
\cite[Proposition 3.4]{topologytensorialbodies}, we know that $R\in\mathcal{B}_\otimes(d_1,\ldots,d_l).$ This proves that $conv_\otimes^{-1}(Y)$ is compact, as desired.

\

2. Let $W:\mathcal{B}_\otimes(d_1,\ldots,d_l)\times [0,1]\rightarrow\mathcal{B}_\otimes(d_1,\ldots,d_l) $ be defined as
$$
W(P,t)=(1-t)P+tconv_\otimes(P)
$$
We will show that $W$ is the desired $GL_\otimes$-homotopy. Let $P^i,$ $i=1,\dots,l,$ be the convex bodies associated to $P$ in Remark \ref{rem:Q-ifixed}.
Notice that $W$ is well-defined since $P$ and $conv_\otimes(P)$ belong to $\mathcal{B}_{P^{1},\dots,P^{l}}(d_1,\dots,d_l)$ which is convex \cite[Proposition 3.4]{topologytensorialbodies}.
The continuity and equivariance of $W$ follow directly from those of $conv_\otimes.$ Also, $W(P,0)=P$ and $W (P,1)=conv_\otimes(P),$ for every tensorial body $P.$ Furthermore, if $P\in\Pi,$ then $conv_\otimes(P)=P$ and
$$W(P,t)=(1-t)P+tconv_\otimes(P)=(1-t)P+tP=P,$$ for any $t\in[0,1].$ This finishes the proof.

\end{proof}

\begin{theorem}
\label{thm:structurePi}
Let $d_i\geq2,$ $i=1,\dots,l,$ and $p=\frac{d_1(d_1+1)+\cdots+d_l(d_l+1)}{2}.$ The following hold:
\begin{enumerate}
\item $\Pi(d_1,\ldots,d_l)$ is homeomorphic to the orbit space $\mathcal{B}(d_1)\times\cdots\times\mathcal{B}(d_l)/H.$
\item The space of tensorial ellipsoids $\mathcal{E}_\otimes(d_1,\ldots,d_l)$ is homeomorphic to the orbit space $\mathcal{E}(d_1)\times\cdots\times\mathcal{E}(d_l)/H.$ In paticular, the latter is homeomorphic to 
$\mathbb{R}^p.$
\item $\Pi(d_1,\ldots,d_l)$ is homeomorphic to $\mathcal{Q}\times\mathbb{R}^p.$ 
\end{enumerate}
\end{theorem}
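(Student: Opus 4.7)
The plan is to establish (1), (2), (3) in order and deduce (3) by combining the first two with the classical Löwner-slice decomposition of $\mathcal{B}(d_i)$.

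For part~(1), $\otimes_\pi\colon\mathcal{B}(d_1)\times\cdots\times\mathcal{B}(d_l)\to\Pi(d_1,\ldots,d_l)$ is continuous by Proposition~\ref{prop:resumeprop}(1) and surjective by the definition of $\Pi$; by (\ref{eq:inverseimage}) its fibre over $P=Q_1\otimes_\pi\cdots\otimes_\pi Q_l$ is precisely the $H$-orbit of $(Q_1,\ldots,Q_l)$. Hence $\otimes_\pi$ descends to a continuous bijection $\overline{\otimes_\pi}\colon(\mathcal{B}(d_1)\times\cdots\times\mathcal{B}(d_l))/H\to\Pi$. To promote this to a homeomorphism, I would build an explicit continuous section $s\colon\Pi\to\mathcal{B}(d_1)\times\cdots\times\mathcal{B}(d_l)$ by setting $s(P)=(P^1,\ldots,P^l)$ with $P^i$ as in Remark~\ref{rem:Q-ifixed}. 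Writing $P=Q_1\otimes_\pi\cdots\otimes_\pi Q_l$ and using the identity $g_P(x^1\otimes\cdots\otimes x^l)=g_{Q_1}(x^1)\cdots g_{Q_l}(x^l)$ from~(\ref{eq:rc in gauges}), a direct computation yields $P^i=g_{Q_i}(e_1^{d_i})\,Q_i$ for $i<l$ and $P^l=Q_l/\prod_{j<l}g_{Q_j}(e_1^{d_j})$; the rescaling identity~(\ref{eq:projectivelambda}) then gives $P^1\otimes_\pi\cdots\otimes_\pi P^l=P$, so $s$ is indeed a section. Continuity of $s$ reduces to the Hausdorff-continuity (uniform on compact subsets) of the Minkowski functional, a standard fact. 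Composing $s$ with the quotient projection yields the continuous inverse of $\overline{\otimes_\pi}$.

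For part~(2), the same scheme applies with $\otimes_2$ in place of $\otimes_\pi$. Continuity of $\otimes_2$ is Proposition~\ref{prop:continuity of hilbert} and surjectivity onto $\mathcal{E}_\otimes$ is \cite[Corollary~4.3]{tensorialbodies}. Since $\otimes_2$ is a reasonable crossnorm, each $E_1\otimes_2\cdots\otimes_2 E_l$ is a tensorial body with respect to $(E_1,\ldots,E_l)$, so $g_E(x^1\otimes\cdots\otimes x^l)=\prod g_{E_i}(x^i)$ on decomposables, and the $\otimes_2$-analogue of~(\ref{eq:projectivelambda}) is immediate from the representation $E_i=T_iB_2^{d_i}$ used in the proof of Proposition~\ref{prop:continuity of hilbert}, together with the multilinearity of the tensor product of operators. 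The fibres of $\otimes_2$ are therefore again $H$-orbits (uniqueness of the positive self-adjoint representative plus uniqueness of tensor factorisation of operators), and the same section $s$, restricted to $\mathcal{E}_\otimes$, takes values in $\mathcal{E}(d_1)\times\cdots\times\mathcal{E}(d_l)$ because coordinate-hyperplane sections of ellipsoids are ellipsoids. The argument of~(1) then yields the homeomorphism $(\mathcal{E}(d_1)\times\cdots\times\mathcal{E}(d_l))/H\cong\mathcal{E}_\otimes$. The ``in particular'' clause follows by combining this homeomorphism with Theorem~\ref{thm:resumenthm}(3), which identifies $\mathcal{E}_\otimes$ with $\mathbb{R}^p$.

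For part~(3), I would invoke the classical $GL(d_i)$-equivariant Löwner-slice decomposition $\mathcal{B}(d_i)\cong\mathcal{L}(d_i)\times\mathcal{E}(d_i)$, where $\mathcal{L}(d_i)=\{P\in\mathcal{B}(d_i):Low(P)=B_2^{d_i}\}\cong\mathcal{Q}$ (see \cite{AntonyanNatalia}). Under this decomposition, multiplication by $\lambda>0$ fixes the $\mathcal{L}(d_i)$-component and scales the $\mathcal{E}(d_i)$-component by $\lambda$: from $Low(\lambda P)=\lambda Low(P)$, the positive self-adjoint operator representing $\lambda Low(P)$ is $\lambda T$ (where $TB_2^{d_i}=Low(P)$), so the normalised slice element $T^{-1}P$ is unchanged. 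Hence $H$ acts trivially on $\mathcal{L}(d_1)\times\cdots\times\mathcal{L}(d_l)$ and by coordinate scaling on $\mathcal{E}(d_1)\times\cdots\times\mathcal{E}(d_l)$, so
\[
\bigl(\mathcal{B}(d_1)\times\cdots\times\mathcal{B}(d_l)\bigr)/H\cong\mathcal{L}(d_1)\times\cdots\times\mathcal{L}(d_l)\times\bigl(\mathcal{E}(d_1)\times\cdots\times\mathcal{E}(d_l)\bigr)/H\cong\mathcal{Q}\times\mathbb{R}^p,
\]
using $\mathcal{Q}^l\cong\mathcal{Q}$ and part~(2). Combined with~(1), this proves $\Pi\cong\mathcal{Q}\times\mathbb{R}^p$. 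The main technical obstacle I anticipate is the Hausdorff-continuity check for the section $s$ in part~(1); once that is in hand, the remaining steps in~(2) and~(3) are essentially bookkeeping.
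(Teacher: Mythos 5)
Your proof is correct, and for parts (1) and (2) it takes a genuinely different route from the paper. The paper shows that $\tilde{\otimes}_\pi$ (and $\tilde{\otimes}_2$) is a homeomorphism by proving the image under $\otimes_\pi$ (resp.\ $\otimes_2$) of any $H$-saturated closed set is closed: given a convergent sequence $P_n=\otimes_\pi(P_{1n},\ldots,P_{ln})$ with tuples in $W$, one replaces the tuples by the canonical ones $(P_n^1,\ldots,P_n^l)$ of Remark~\ref{rem:Q-ifixed} (still in $W$ by $H$-saturation, via Proposition~3.6 of \cite{tensorialbodies}) and passes to the limit using continuity of $P\mapsto P^i$. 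You instead turn that same canonical assignment directly into a continuous section $s(P)=(P^1,\ldots,P^l)$ of the quotient map, so $\rho\circ s$ is the continuous inverse of $\tilde{\otimes}_\pi$; this is arguably more transparent, and your explicit computation $P^i=g_{Q_i}(e_1^{d_i})Q_i$ (for $i<l$) confirms that $s$ is a genuine section. Both arguments hinge on the same ingredient, the Hausdorff-continuity of $P\mapsto P^i$, which the paper does establish (see the argument inside Theorem~\ref{thm:ANRproperty}, and the citation of Lemma~3.1 and Proposition~3.7 of \cite{topologytensorialbodies} within the proof of this theorem), so the obstacle you flag is a known fact. One small correction in your part (2): the claim that $s$ maps $\mathcal{E}_\otimes$ into $\mathcal{E}(d_1)\times\cdots\times\mathcal{E}(d_l)$ is best justified not by ``coordinate-hyperplane sections of ellipsoids are ellipsoids'' but by your own earlier formula, which shows each $P^i$ is a positive scalar multiple of $E_i$ when $P=E_1\otimes_2\cdots\otimes_2 E_l$. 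Your part (3) is the same argument as the paper's, just without the commutative diagram.
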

\begin{proof}
 1. 
Recall that $\otimes_\pi$ is continuous, see \cite[Proposition 3.3]{topologytensorialbodies}. This, together with (\ref{eq:inverseimage}), shows that
$\otimes_\pi$ induces a continuous bijective map $\tilde{\otimes}_\pi:\mathcal{B}(d_1)\times\cdots\times\mathcal{B}(d_l)/H\rightarrow\Pi$ such that $\otimes_\pi=\tilde{\otimes}_\pi\circ\rho,$ where $\rho:\mathcal{B}(d_1)\times\cdots\times\mathcal{B}(d_l)\rightarrow\mathcal{B}(d_1)\times\cdots\times\mathcal{B}(d_l)/H$ is the orbital projection. 
We will show that $\tilde{\otimes}_\pi$ is indeed a homeomorphism. To this end, let $W\subseteq\mathcal{B}(d_1)\times\cdots\times\mathcal{B}(d_l)$ be a closed subset such that $W=\bigcup\{\rho(P_1,\dots,P_l):(P_1,\dots,P_l)\in W\},$ we will see that $\otimes_\pi(W)$ is closed too. Suppose that $\{P_n\}_{n}\subset\otimes_\pi(W)$ is a sequence converging to some $P\in\Pi,$ and let $P_n^i\subset\mathbb{R}^{d_i},$ $i=1,\dots,l,$ be the $0$-symmetric convex bodies associated to $P_n$ in Remark \ref{rem:Q-ifixed}.  Since  
$P_n=P_{1n}\otimes_\pi\cdots\otimes_\pi P_{ln}$  for some $(P_{1n},\ldots,P_{ln})\in W,$ then 
$P_n\in\mathcal{B}_{P_{1n},\ldots,P_{ln}}\cap\mathcal{B}_{P_n^1,\ldots,P_n^l}$ and, by \cite[Proposition 3.6]{tensorialbodies}, $P_n^i=\lambda_n^iP_{in}$ for some $\lambda_n^i>0$ such that $\lambda_n^1\cdots\lambda_n^l=1.$ So, 
$(P_n^1,\ldots,P_n^l)\in W.$ Using the fact that $P_n^i$ converges to $P^i$ (see \cite[Lemma 3.1 and Proposition 3.7]{topologytensorialbodies}),  we know that $P_n^1\otimes_\pi\cdots\otimes_\pi P_n^l$ approaches to 
$P^1\otimes_\pi\cdots\otimes_\pi P^l$ and the latter belongs to $\otimes_\pi(W).$ Furthermore, since
\begin{align*}
P_n^1\otimes_\pi\cdots\otimes_\pi P_n^l=\lambda_n^1 P_{1n}\otimes_\pi\cdots\otimes_\pi\lambda_n^l P_{ln}
=P_{1n}\otimes_\pi\cdots\otimes_\pi P_{ln}
=P_n,
\end{align*}
then $P=P^1\otimes_\pi\cdots\otimes_\pi P^l\in\otimes_\pi(W).$ This shows that $\otimes_\pi(W)$ is closed, as desired.

\

2. This proof follows along the same ideas of 1. However, we have included it for the sake of completeness. Notice that the restriction $\rho_{|}$ of the orbit map $\rho$ to $\mathcal{E}(d_1)\times\cdots\times\mathcal{E}(d_l)$ is precisely the orbital projection onto 
$\mathcal{E}(d_1)\times\cdots\times\mathcal{E}(d_l)/H.$ In addition, by \cite[Proposition 5]{maiteluisa2}, for any $l$-tuple $E_i\in\mathcal{E}(d_i),$ $i=1,\ldots,l,$ we have that
\begin{align*}
\lambda_1E_1\otimes_2\cdots\otimes_2\lambda_lE_l=(\lambda_1\cdots\lambda_l)E_1\otimes_2\cdots\otimes_2 E_l
=E_1\otimes_2\cdots\otimes_2 E_l,
\end{align*}
whenever each $\lambda_i>0$ and $\lambda_1\cdots\lambda_l=1.$ In consequence, the $H$-orbit $H((E_1,\ldots,E_l))$ is such that $\otimes_2[H((E_1,\ldots,E_l))]=\{E_1\otimes_2\cdots\otimes_2 E_l\}.$ This, along with Proposition \ref{prop:continuity of hilbert}, yield that $\otimes_2$ determines a continuous injective map $\tilde{\otimes}_2:\mathcal{E}(d_1)\times\cdots\times\mathcal{E}(d_l)/H\rightarrow\mathcal{E}_\otimes(d_1,\ldots,d_l)$ such that $\otimes_2=\tilde{\otimes}_2\circ\rho_{|}.$ Moreover, given that in \cite[Corollary 4.3]{tensorialbodies}, it is proved that  
$\mathcal{E}_\otimes$ consists only of Hilbertian tensor products of ellipsoids $E_i\in\mathcal{E}(d_i),$ $i=1,\ldots,l$, then $\tilde{\otimes}_2$ is a surjective map. 

We now turn to prove that $\tilde{\otimes}_2$ is a homeomorphim. To this end, we will consider a closed subset $W\subseteq\mathcal{E}(d_1)\times\cdots\times\mathcal{E}(d_l)$ such that $W=\bigcup\{\rho_{|}(E_1,\ldots,E_l): (E_1,\ldots,E_l)\in W \},$ and we shall prove that $\otimes_2(W)$ is a closed set. Let $\{E_n\}_n\subseteq \otimes_2(W)$ be a sequence converging to $E\in\mathcal{E}_\otimes.$  Clearly $E_n=E_{1n}\otimes_2\cdots\otimes_2 E_{ln},$ for some $(E_{1n},\ldots,E_{ln})\in W.$ Also, by \cite[Proposition 3.6]{tensorialbodies} and the fact that $E_n$ belongs to $\mathcal{B}_{E_n^1,\ldots,E_n^l}\cap\mathcal{B}_{E_{1n},\ldots,E_{ln}},$ it follows that
$E_n^i=\lambda_n^i E_{in},$ with $\lambda_n^i>0$ and $\lambda_n^1\cdots\lambda_n^l=1.$ Thus,  
$(E_n^1,\ldots, E_n^l)\in W$ and, since $E_n^i$ converges to $E^i$ (see \cite[Lemma 3.1 and Proposition 3.7]{topologytensorialbodies}), we have that $(E^1,\ldots, E^l)\in W.$ Indeed, by the continuity of $\otimes_2$ (Proposition \ref{prop:continuity of hilbert}), we also know that $E_n^1\otimes_2\cdots\otimes_2 E_n^l$ approaches to $E^1\otimes_2\cdots\otimes_2 E^l.$ Finally, since
\begin{align*}
E_n^1\otimes_2\cdots\otimes_2 E_n^l=\lambda_n^1 E_{1n}\otimes_2\cdots\otimes_2\lambda_n^l E_{ln}
=E_{1n}\otimes_2\cdots\otimes_2E_{ln}
=E_n,
\end{align*}
then  $E_n^1\otimes_2\cdots\otimes_2 E_n^l$ converges to $E$ and $E=E^1\otimes_2\cdots\otimes_2 E^l\in\otimes_2(W).$ This proves that $\otimes_2(W)$ is closed as required. 
The last part of the result now follows from the fact that $\mathcal{E}_\otimes(d_1,\ldots,d_l)$ is homeomorphic to $\mathbb{R}^p$ \cite[Corollary 4.2]{topologytensorialbodies}. 

\

3. 
Let $L(d_i)\subset\mathcal{B}(d_i)$ be the subspace consisting of $P\in\mathcal{B}(d_i)$ such that $Low(P)=B_2^{d_i}.$ We shall prove that $\mathcal{B}(d_1)\times\cdots\times\mathcal{B}(d_l)$ is homeomorphic to the product $L(d_1)\times\cdots\times L(d_l)\times\left(\mathcal{E}(d_1)\times\cdots\times\mathcal{E}(d_l)/H\right).$ The result then will be a consequence of  2., and the fact that $L(d_i)$ is homeomorphic to $\mathcal{Q}$ \cite[Theorem 1.4]{antonyananerviowest}, and 
$\mathcal{Q}\times\mathcal{Q}$ is homeomorphic to $\mathcal{Q}$ (see \cite[Excercise 2, \textsection 1.1]{vanMillbook}). 
We will require some notation from  \cite{AntonyanNatalia}.
Let $\ell_i:\mathcal{B}(d_i)\rightarrow\mathcal{E}(d_i)$ be the $GL(d_i)$-map sending each $P_i\in\mathcal{B}(d_i)$ to its L\"{o}wner ellipsoid $\ell_i(P_i)=Low(P_i),$ and let $r_i:\mathcal{B}(d_i)\rightarrow L(d_i)$ be the $O(d_i)$-retraction of \cite[Corollary 3.9]{AntonyanNatalia} defined as
$$r_i(P_i)=[\xi_i(\ell_i(P_i))]^{-1}P_i.$$
Here $\xi_i$ is the homeomorphism sending a $0$-symmetric ellipsoid $C\subset\mathbb{R}^{d_i}$ to the unique self-adjoint positive operator $\xi_i(C)\in GL(d_i)$ such that $\xi_i(C)B_2^{d_i}=C,$ $i=1,\ldots,l,$ see  \cite[(3.4)]{AntonyanNatalia}.
We claim that the diagram below commutes and the map $\tilde{\varphi}$ is a homeomorphism:
\begin{center}
\begin{tikzpicture}
  \matrix (m) [matrix of math nodes,row sep=3em,column sep=4em,minimum width=2em]
  {
     \mathcal{B}(d_1)\times\cdots\times\mathcal{B}(d_l) & L(d_1)\times\cdots\times L(d_l)\times\mathcal{E}(d_1)\times\cdots\times\mathcal{E}(d_l) \\
     \mathcal{B}(d_1)\times\cdots\times\mathcal{B}(d_l)/H & L(d_1)\times\cdots\times L(d_l)\times\left(\mathcal{E}(d_1)\times\cdots\times\mathcal{E}(d_l)/H\right) \\};
  \path[-stealth]
    (m-1-1) edge node [left] {$\rho$} (m-2-1)
            edge node [below] {$\varphi$} (m-1-2)
    (m-1-2) edge node [right] {$I_{\prod_i L(d_i)}\times\rho_{|}$} (m-2-2)
    (m-2-1) edge node [below] {$\tilde{\varphi}$} (m-2-2);
\end{tikzpicture}
\end{center}
Here $I_{\prod_i L(d_i)}$ denotes the identity map of $L(d_1)\times\cdots\times L(d_l)$ and  $\varphi$ is defined as 
$$
\varphi(P_1,\ldots,P_l)=\left(r_1(P_1),\ldots,r_l(P_l),\ell_1(P_1),\ldots,\ell_l(P_l)\right),
$$
for $P_i\in\mathcal{B}(d_i),$ $i=1,\ldots,l.$ Observe that $\varphi$ is a continuous map, since both  $r_i$ and $\ell_i$ are continuous for each $i$. Moreover, due to the map sending $P_i\in\mathcal{B}(d_i)$ to the pair $(r_i(P_i),\ell_i(P_i))$ is a $O(d_i)$-homeomorphism between $\mathcal{B}(d_i)$ and $L(d_i)\times\mathcal{E}(d_i) $ (see \cite[Corollary 3.9]{AntonyanNatalia}), then  $\varphi$ must be a $O(d_1)\times\cdots\times O(d_l)$-equivariant homeomorphism. 

On the other hand, if we let $\lambda>0$ and $X\in\mathcal{B}(d_i),$ then $\lambda\xi[\ell_i(X)]$ is a positive self-adjoint operator such that  
\begin{equation*}
\lambda\xi[\ell_i(X)]B_2^{d_i}=\lambda\ell_i(X)=\ell_i(\lambda X).
\end{equation*}
Hence, from the uniqueness of  $\xi[\ell_i(\lambda X)],$ we know that $\xi[\ell_i(\lambda X)]=\lambda\xi[\ell_i(X)]$ and thus 
\begin{equation}
\label{eq:r is homot invariant}
r_i(\lambda X)=X, \text{ for any } \lambda>0.
\end{equation}

Now, if we consider arbitrary $l$-tuples $(P_1,\ldots,P_l)$ and $(X_1,\ldots,X_l)$ in the same $H$-orbit, then there exists $\lambda_i>0,$ $i=1,\ldots,l,$ such that $\lambda_1\cdots\lambda_l=1$ and $X_i=\lambda_i P_i.$ Therefore, by (\ref{eq:r is homot invariant}),
\begin{align*}
\varphi(X_1,\ldots,X_l)=&\left(r_1(P_1),\ldots,r_l(P_l),\ell_1(\lambda_1P_1),\ldots,\ell_l(\lambda_l P_l)\right)\\
=&\left(r_1(P_1),\ldots, r_l(P_l),\lambda_1\ell_1(P_1),\ldots,\lambda_l\ell_l( P_l)\right),
\end{align*}
and so $I_{\prod_i L(d_i)}\times\rho_{|}\left(\varphi(X_1,\ldots,X_l)\right)=I_{\prod_i L(d_i)}\times\rho_{|}\left(\varphi(P_1,\ldots,P_l)\right).$ Similarly, by the injectivity of $\varphi$, if the last equality holds then $(P_1,\ldots,P_l)$ and $(X_1,\ldots,X_l)$ must be in the same $H$-orbit.  
Thus, due to  $I_{\prod_i L(d_i)}\times\rho_{|}$ is indeed a quotient map and $\varphi$ is a homeomorphism, this shows that $\tilde{\varphi}$ is well-defined, bijective and it must be a homeomorphism.
\end{proof}

\section{On the ANR property}
\label{sec:ANRproperty}

The support function $h_P:\mathbb{R}^d\rightarrow[0,\infty)$ of a $0$-symmetric convex body $P\subset\mathbb{R}^{d}$ is defined as $h_P(y)=\text{sup}_{x\in P}\langle x,y\rangle.$ Similarly, for a $0$-symmetric convex body $P\subset\otimes_{i=1}^l\mathbb{R}^{d_i},$ its support function $h_P:\otimes_{i=1}^l\mathbb{R}^{d_i}\rightarrow[0,\infty)$ is calculated with respect to the Euclidean structure $\langle\cdot,\cdot\rangle_H$ of the tensor space. It is well-known that $h_P(\cdot)$ coincides with the Minkowski functional of the polar body. That is $h_P(\cdot)=g_{P^\circ}(\cdot)$ (\cite[Theorem 1.7.6]{Schneider1993}). 

\begin{proposition}
\label{prop:notconvex}
The space of tensorial bodies $\mathcal{B}_\otimes(d_1,\ldots,d_l),$  $d_i\geq 2,$ $i=1,\ldots,l$ and $l\geq2,$  is not closed with respect to the Minkowski sum. 
\end{proposition}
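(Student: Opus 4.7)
The plan is to exploit the characterization of tensorial bodies through the factorization of support functions on decomposable tensors. If $R\in\mathcal{B}_\otimes(d_1,\ldots,d_l)$ is tensorial with respect to convex bodies $R_i\in\mathcal{B}(d_i),$ then combining $h_R=g_{R^{\circ}}$ with (\ref{eq:rc in gauges2}) gives
\begin{equation*}
h_R(x^1\otimes\cdots\otimes x^l)=h_{R_1}(x^1)\cdots h_{R_l}(x^l)
\end{equation*}
for all $x^i\in\mathbb{R}^{d_i}.$ I will exhibit two tensorial bodies whose Minkowski sum violates this product structure, so by the above cannot be tensorial.

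Concretely, set
\begin{equation*}
P:=B_2^{d_1}\otimes_\pi\cdots\otimes_\pi B_2^{d_l}\quad\text{and}\quad Q:=B_\infty^{d_1}\otimes_\pi B_\infty^{d_2}\otimes_\pi B_2^{d_3}\otimes_\pi\cdots\otimes_\pi B_2^{d_l},
\end{equation*}
both of which lie in $\mathcal{B}_\otimes(d_1,\ldots,d_l)$ by construction. Using $h_{B_2^d}=\|\cdot\|_2,$ $h_{B_\infty^d}=\|\cdot\|_1,$ and the additivity $h_{P+Q}=h_P+h_Q,$ on a decomposable tensor I obtain
\begin{equation*}
h_{P+Q}(x^1\otimes\cdots\otimes x^l)=\bigl(\|x^1\|_2\|x^2\|_2+\|x^1\|_1\|x^2\|_1\bigr)\prod_{i=3}^l\|x^i\|_2.
\end{equation*}

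Suppose for contradiction that $P+Q$ is tensorial with respect to some $R_i\in\mathcal{B}(d_i).$ Fixing unit Euclidean vectors in the last $l-2$ slots and comparing with $\prod_i h_{R_i}(x^i)$ forces the function $F(x^1,x^2):=\|x^1\|_2\|x^2\|_2+\|x^1\|_1\|x^2\|_1$ to factor as a product $f(x^1)g(x^2).$ Any such factorization must satisfy the cross-ratio identity $F(a,c)F(b,d)=F(a,d)F(b,c).$ Taking $a=e_1\in\mathbb{R}^{d_1},$ $b=\tfrac{1}{\sqrt{2}}(e_1+e_2)\in\mathbb{R}^{d_1},$ and analogously $c,d\in\mathbb{R}^{d_2}$ (possible since $d_1,d_2\geq 2$), a direct computation yields $F(a,c)=2,$ $F(b,d)=3,$ and $F(a,d)=F(b,c)=1+\sqrt{2},$ whence $F(a,c)F(b,d)=6\neq 3+2\sqrt{2}=F(a,d)F(b,c),$ the required contradiction.

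The main conceptual point is the mismatch between the \emph{multiplicative} nature of tensoriality on decomposable tensors and the \emph{additive} nature of the Minkowski sum; a sufficiently ``mixed'' pair of tensorial bodies must break the product factorization, and the argument reduces to a short cross-ratio check.
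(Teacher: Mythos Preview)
Your proof is correct and follows essentially the same approach as the paper's: both choose two projective tensor products built from different $\ell_p$ balls, compute the support function of their Minkowski sum on decomposable tensors via (\ref{eq:rc in gauges2}) and additivity, and then verify that the resulting expression cannot factor as a product. The only cosmetic differences are your specific choice of bodies ($B_\infty$ in two slots versus the paper's $B_1$ in all slots) and your use of a cross-ratio identity to detect non-factorization rather than the paper's ratio test; the underlying mechanism is identical.
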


\begin{proof}
Let $B_p^{d_i}$ denote the closed unit ball of the classical norm $\|\cdot\|_p$ on $\mathbb{R}^{d_i},$ $i=1,\dots,l,$ for $p=1,\infty$.  We shall show that the Minkowski sum of the projective tensor products  $P=\frac{1}{2}B_{2}^{d_1}\otimes_{\pi}\cdots\otimes_{\pi}B_{2}^{d_l}$ and  
$R=\frac{1}{2}B_{1}^{d_1}\otimes_{\pi}\cdots\otimes_{\pi}B_{1}^{d_l}$ is not a tensorial body in $\otimes_{i=1}^l\mathbb{R}^{d_i}$. 
Since the support function is Minkowski additive, we know that $h_{P+Q}(\cdot)=h_{P}(\cdot)+h_{Q}(\cdot)$  and so $g_{(P+R)^\circ}(\cdot)=g_{P^\circ}(\cdot)+ g_{R^\circ}(\cdot).$ Hence, from (\ref{eq:rc in gauges2}) and the fact that  $(B_1^{d_i})^\circ=B_\infty^{d_i}$, we get
$$
g_{(P+R)^\circ}(x^1\otimes\cdots\otimes x^l)=\frac{\|x^1\|_2\cdots\|x^l\|_2+\|x^1\|_\infty\cdots\|x^l\|_\infty}{2},
$$
for any $x^i\in\mathbb{R}^{d_i},$ $i=1,\dots,l$. Thus, if we fix the vectors $a^i=(1,\ldots,1)\in\mathbb{R}^{d_i}$ and $b^i=(1,0,\ldots,0)\in\mathbb{R}^{d_i}$ with $i=1,\ldots,l-1,$ then, for every $x^l\in\mathbb{R}^{d_l}$ we have
$$ 
g_{(P+R)^\circ}(a^1\otimes\cdots\otimes a^{l-1}\otimes x^l)=\frac{\sqrt{d_1\cdots d_{l-1}}\|x^l\|_2+\|x^l\|_\infty}{2},
$$
and
$$
g_{(P+R)^\circ}(b^1\otimes\cdots\otimes b^{l-1}\otimes x^l)=\frac{\|x^l\|_2+\|x^l\|_\infty}{2}.
$$
Therefore, the value of $g_{(P+R)^\circ}(a^1\otimes\cdots\otimes a^{l-1}\otimes x^l)/g_{(P+R)^\circ}(b^1\otimes\cdots\otimes b^{l-1}\otimes x^l)$ depends on $x^l\neq 0.$ In consequence, there do not exist $0$-symmetric convex bodies such that (\ref{eq:rc in gauges}) holds for $(P+R)^\circ.$ In particular, it is not a tensorial body. Since $P+R$ belongs to the class of tensorial bodies if and only if $(P+R)^\circ$ does it too (\cite[Proposition 3.5]{tensorialbodies}), the proof is finished.
\end{proof}

As we just showed in the proof of the previous proposition, the Minkowski sum of an arbitrary pair of tensorial bodies is not necessarily a tensorial body. Nevertheless, taking into account the multilinear nature of tensors, it turns out that  in many cases the Minkowski sum does remain in the class.

\begin{proposition}
\label{prop:multilineality}
Let $\lambda>0$ be a real number and let $P_i\in\mathcal{B}(d_{i}),$ $i=1,\dots,l,$ be an arbitrary $l$-tuple. Then, for every $j=1,\dots,l$ and any 
$R_j\in\mathcal{B}(d_{j}),$ if  $P\in\mathcal{B}_{P_1,\dots,P_j,\dots,P_l}(d_1,\dots,d_l)$ and $R\in\mathcal{B}_{P_1,\dots,P_{j-1},R_j,P_{j+1},\dots,P_l}(d_1,\dots,d_l),$ then $P+\lambda R$ is a tensorial body. Indeed, $P+\lambda R\in\mathcal{B}_{P_1,\dots,P_{j-1},P_j+\lambda R_j,P_{j+1},\dots,P_l}(d_1,\dots,d_l).$
\end{proposition}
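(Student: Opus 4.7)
My plan is to verify directly the set-inclusions defining $\mathcal{B}_{P_1,\dots,P_{j-1},P_j+\lambda R_j,P_{j+1},\dots,P_l}(d_1,\dots,d_l)$:
\[
P_{1}\otimes_{\pi}\cdots\otimes_\pi(P_j+\lambda R_j)\otimes_\pi\cdots\otimes_\pi P_l\;\subseteq\;P+\lambda R\;\subseteq\;P_{1}\otimes_{\epsilon}\cdots\otimes_\epsilon(P_j+\lambda R_j)\otimes_\epsilon\cdots\otimes_\epsilon P_l.
\]
The lower inclusion follows readily from the multilinearity of the tensor product; the upper inclusion is the main obstacle and I would handle it through the Minkowski additivity of support functions together with the gauge characterization~(\ref{eq:rc in gauges2}).

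For the lower inclusion, a typical decomposable generator $x^{1}\otimes\cdots\otimes(y^{j}+\lambda z^{j})\otimes\cdots\otimes x^{l}$ of the left-hand side, with $x^{i}\in P_{i}$ for $i\neq j$, $y^{j}\in P_{j}$ and $z^{j}\in R_{j}$, splits by multilinearity as
\[
x^{1}\otimes\cdots\otimes y^{j}\otimes\cdots\otimes x^{l}\;+\;\lambda\,x^{1}\otimes\cdots\otimes z^{j}\otimes\cdots\otimes x^{l},
\]
and hence lies in $(P_{1}\otimes_{\pi}\cdots\otimes_\pi P_l)+\lambda(P_{1}\otimes_\pi\cdots\otimes_\pi R_j\otimes_\pi\cdots\otimes_\pi P_l)$. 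Taking convex hulls and invoking the hypothesized inclusions $P_{1}\otimes_{\pi}\cdots\otimes_\pi P_l\subseteq P$ and $P_{1}\otimes_\pi\cdots\otimes_\pi R_j\otimes_\pi\cdots\otimes_\pi P_l\subseteq R$ finishes this part.

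The upper inclusion is the principal difficulty: the monotonicity bound $P+\lambda R\subseteq(P_{1}\otimes_{\epsilon}\cdots\otimes_\epsilon P_l)+\lambda(P_{1}\otimes_\epsilon\cdots\otimes_\epsilon R_j\otimes_\epsilon\cdots\otimes_\epsilon P_l)$ typically overshoots the target body, so a direct set-theoretic estimate does not suffice. I would instead verify that identity~(\ref{eq:rc in gauges2}) holds for $P+\lambda R$ with respect to the new tuple $(P_1,\dots,P_j+\lambda R_j,\dots,P_l)$. On a decomposable $x^{1}\otimes\cdots\otimes x^{l}$, the support-function additivity $g_{(P+\lambda R)^\circ}=h_{P+\lambda R}=h_{P}+\lambda h_{R}=g_{P^{\circ}}+\lambda g_{R^{\circ}}$, the tensoriality of $P$ and $R$ for their respective tuples, and the Minkowski additivity $g_{(P_j+\lambda R_j)^{\circ}}=g_{P_{j}^{\circ}}+\lambda g_{R_{j}^{\circ}}$ combine to give
\[
g_{(P+\lambda R)^{\circ}}(x^{1}\otimes\cdots\otimes x^{l})=\Bigl({\textstyle\prod}_{i\neq j}g_{P_{i}^{\circ}}(x^{i})\Bigr)\,g_{(P_{j}+\lambda R_{j})^{\circ}}(x^{j}),
\]
which is precisely~(\ref{eq:rc in gauges2}) for the new tuple.

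From this identity, every decomposable $x^{1}\otimes\cdots\otimes x^{l}$ with $x^{i}\in P_{i}^{\circ}$ for $i\neq j$ and $x^{j}\in(P_{j}+\lambda R_{j})^{\circ}$ satisfies $g_{(P+\lambda R)^{\circ}}(x^{1}\otimes\cdots\otimes x^{l})\leq 1$, hence lies in $(P+\lambda R)^{\circ}$. Taking convex hulls produces $P_{1}^{\circ}\otimes_{\pi}\cdots\otimes_\pi(P_{j}+\lambda R_{j})^{\circ}\otimes_\pi\cdots\otimes_\pi P_{l}^{\circ}\subseteq(P+\lambda R)^{\circ}$, and polarity together with the bipolar theorem converts this into the desired upper inclusion $P+\lambda R\subseteq P_{1}\otimes_{\epsilon}\cdots\otimes_\epsilon(P_{j}+\lambda R_{j})\otimes_\epsilon\cdots\otimes_\epsilon P_{l}$, completing the argument.
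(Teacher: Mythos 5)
Your proof is correct and follows essentially the same route as the paper's: the lower inclusion by splitting decomposable generators via multilinearity, and the upper inclusion by exploiting Minkowski additivity of support functions together with the gauge characterization~(\ref{eq:rc in gauges2}). The only cosmetic difference is that you spell out the polar/bipolar step converting the gauge identity into the $\otimes_\epsilon$ inclusion, whereas the paper leaves that conversion implicit; the paper also fixes $j=1$ for notational convenience while you treat general $j$ directly.
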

\begin{proof}
For simplicity of the notation, we prove the result for $j=1.$ Notice that if $x^1\in P_1,$ $y^1\in R_1$ and $x^i\in P_i,$ for $i=2,\dots,l,$ then 
\begin{align*}
(x^1+\lambda y^1)\otimes x^2\otimes\cdots\otimes x^l&=x^1\otimes x^2\otimes\cdots\otimes x^l+\lambda y^1\otimes x^2\otimes\cdots\otimes x^l\\
&\in P_1\otimes_\pi P_2\otimes_\pi\cdots\otimes_\pi  P_l+\lambda R_1\otimes_\pi P_2\otimes_\pi\cdots\otimes_\pi  P_l\\
&\subseteq P+\lambda R.
\end{align*}
Thus, by the definition of $\otimes_\pi,$ $(P_1+\lambda R_1)\otimes_\pi P_2\otimes_\pi\cdots\otimes_\pi  P_l\subseteq P+\lambda R.$
To prove the inclusion $P+\lambda R\subseteq(P_1+\lambda R_1)\otimes_\epsilon P_2\otimes_\epsilon\cdots\otimes_\epsilon  P_l,$ observe that 
$h_{P+\lambda R}(\cdot)=h_{P}(\cdot)+h_{\lambda R}(\cdot)$  and so $g_{(P+\lambda R)^\circ}(\cdot)=g_{P^\circ}(\cdot)+ g_{(\lambda R)^\circ}(\cdot).$ Hence, by (\ref{eq:rc in gauges2}), for every $x^i\in\mathbb{R}^{d_i},$ $i=1,\dots,l,$ we also have
\begin{align*}
g_{(P+\lambda R)^\circ}(x^1\otimes\cdots\otimes x^l)&=g_{P_1^\circ}(x^1)g_{P_2^\circ}(x^2)\cdots g_{P_l^\circ}(x^l)+g_{(\lambda R_1)^\circ}(x^1)g_{P_2^\circ}(x^2)\cdots g_{P_l^\circ}(x^l)\\
&=(g_{P_1^\circ}(x^1)+g_{(\lambda R_1)^\circ}(x^1))g_{P_2^\circ}(x^2)\cdots g_{P_l^\circ}(x^l).
\end{align*}
This, together with the definition of $\otimes_\epsilon,$ yields to $P+\lambda R\subseteq(P_1+\lambda R_1)\otimes_\epsilon P_2\otimes_\epsilon\cdots\otimes_\epsilon  P_l.$ Finally, from (\ref{eq:definition tensorbody}), it follows that $P+\lambda R$ is a tensorial body w.r.t. $P_1+\lambda R_1,P_{2},\dots,P_l$. This finishes the proof. 
\end{proof}

As an application of Proposition \ref{prop:multilineality}, notice that it is always possible to define a polygonal path joining any pair $P,R$ of tensorial bodies in $\mathbb{R}^{d_1}\otimes\mathbb{R}^{d_2}.$ To this end, let us assume that $P\in\mathcal{B}_{P_1,P_2}(d_1,d_2)$ and $R\in\mathcal{B}_{R_1,R_2}(d_1,d_2),$ for some $P_i,R_i\in\mathcal{B}(d_i),$ $i=1,2,$ and let $K$ be an arbitrary tensorial body w.r.t. $P_1,R_2.$ Then, the map $\alpha:[0,1]\rightarrow\mathcal{B}_\otimes(d_1,d_2)$ defined as,
\begin{equation*}
\alpha(t)=\begin{cases}
(1-2t)P+(2t)K; \text{ for } 0\leq t\leq\frac{1}{2},\\
(2-2t)K+(2t-1)R; \text{ for } \frac{1}{2}\leq t\leq1,
\end{cases}
\end{equation*}
 is a polygonal path joining $P$ and $R.$ It is not difficult to see that this process can be extended to 
$\mathcal{B}_\otimes(d_1,\dots,d_l)$ with $l>2.$

\begin{theorem}
\label{thm:ANRproperty}
$\mathcal{B}_\otimes(d_1,\dots,d_l)$, $d_i\geq2,$ $i=1,\dots,l,$ is an AR. In particular, the subspace $\mathcal{L}_\otimes(d_1,\dots,d_l)$ is also an AR.
\end{theorem}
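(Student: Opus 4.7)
The plan is to realize $\mathcal{B}_\otimes(d_1,\dots,d_l)$ as a continuous retract of the ambient hyperspace $\mathcal{B}(\otimes_{i=1}^l\mathbb{R}^{d_i})$. After identifying $\otimes_{i=1}^l\mathbb{R}^{d_i}$ with $\mathbb{R}^d$, $d=d_1\cdots d_l$, the latter is the hyperspace $\mathcal{B}(d)$, which is known to be an AR (a consequence of its homeomorphism with $\mathcal{Q}\times\mathbb{R}^{d(d+1)/2}$ established in \cite{antonyananerviowest}). Since any retract of an AR is an AR, exhibiting a continuous retraction $r\colon\mathcal{B}(\otimes_{i=1}^l\mathbb{R}^{d_i})\to\mathcal{B}_\otimes(d_1,\dots,d_l)$ will suffice.

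\textbf{Construction of $r$.} Given $K\in\mathcal{B}(\otimes_{i=1}^l\mathbb{R}^{d_i})$, define the $0$-symmetric convex bodies $K^i\in\mathcal{B}(d_i)$ by applying the formulas of Remark \ref{rem:Q-ifixed} verbatim, with $K$ in place of $P$; these make sense for an arbitrary $K\in\mathcal{B}(\otimes_{i=1}^l\mathbb{R}^{d_i})$. I then set
\[
r(K):=\operatorname{conv}\bigl(K\cup(K^1\otimes_\pi\cdots\otimes_\pi K^l)\bigr)\cap\bigl(K^1\otimes_\epsilon\cdots\otimes_\epsilon K^l\bigr).
\]
Then $r(K)$ is $0$-symmetric, convex, compact, and satisfies $K^1\otimes_\pi\cdots\otimes_\pi K^l\subseteq r(K)\subseteq K^1\otimes_\epsilon\cdots\otimes_\epsilon K^l$, so by (\ref{eq:definition tensorbody}) it belongs to $\mathcal{B}_{K^1,\dots,K^l}(d_1,\dots,d_l)\subseteq\mathcal{B}_\otimes(d_1,\dots,d_l)$; in particular $r(K)$ has nonempty interior. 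When $K$ already lies in $\mathcal{B}_\otimes(d_1,\dots,d_l)$, Remark \ref{rem:Q-ifixed} forces $K^1\otimes_\pi\cdots\otimes_\pi K^l\subseteq K\subseteq K^1\otimes_\epsilon\cdots\otimes_\epsilon K^l$, so both the outer intersection and the convex-hull-of-union become trivial and $r(K)=K$.

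\textbf{Continuity.} Continuity of $r$ is obtained by composing three continuous steps: (i) $K\mapsto(K^1,\dots,K^l)$ is continuous, since slicing a $0$-symmetric convex body by a fixed linear subspace through the origin depends continuously (in the Hausdorff metric) on the body, and the normalizing factor $g_K(e_1^{d_1}\otimes\cdots\otimes e_1^{d_l})$ is positive and continuous in $K$; (ii) $\otimes_\pi$ and $\otimes_\epsilon$ are continuous by Proposition \ref{prop:resumeprop}(1); (iii) the operations $(A,B)\mapsto\operatorname{conv}(A\cup B)$ and $(A,B)\mapsto A\cap B$ are continuous on pairs of $0$-symmetric convex bodies, the latter requiring the intersection to have nonempty interior, which holds uniformly here since $K^1\otimes_\pi\cdots\otimes_\pi K^l$ lies inside both factors.

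\textbf{AR property for $\mathcal{L}_\otimes$ and main obstacle.} For the second assertion, the homeomorphism (\ref{eq:introLtensor}) shows that $\mathcal{L}_\otimes(d_1,\dots,d_l)\times\mathbb{R}^p$ is an AR, and since $\mathcal{L}_\otimes(d_1,\dots,d_l)$ is a closed retract of this product (e.g.\ via $(x,v)\mapsto(x,0)$), it is an AR as well. The only delicate point I foresee is the continuity of the intersection step in $r$, which rests on the uniform nonempty-interior condition supplied by the inclusion $K^1\otimes_\pi\cdots\otimes_\pi K^l\subseteq r(K)$; once that is granted, everything else reduces to assembling standard continuity facts for tensor products, convex hulls, and intersections of convex bodies.
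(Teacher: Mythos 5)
Your proposal is correct and is essentially identical to the paper's proof: you construct the same retraction $\eta(K)=\operatorname{conv}(K\cup K^1\otimes_\pi\cdots\otimes_\pi K^l)\cap(K^1\otimes_\epsilon\cdots\otimes_\epsilon K^l)$ using the bodies $K^i$ of Remark \ref{rem:Q-ifixed}, verify it lands in $\mathcal{B}_\otimes$ and fixes tensorial bodies, and invoke that a retract of the AR $\mathcal{B}(\otimes_{i=1}^l\mathbb{R}^{d_i})$ is an AR. The only cosmetic differences are in the continuity argument for $K\mapsto(K^1,\ldots,K^l)$ (the paper argues via uniform convergence of Minkowski functionals, you via continuity of slicing, both fine) and in the final step for $\mathcal{L}_\otimes$ (the paper cites that $\mathcal{L}_\otimes$ is a retract of $\mathcal{B}_\otimes$, you use the homeomorphism $\mathcal{B}_\otimes\cong\mathcal{L}_\otimes\times\mathbb{R}^p$; both are equivalent routes).
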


\begin{proof}
We shall exhibit a retraction from the space  $\mathcal{B}(\otimes_{i=1}^{l}\mathbb{R}^{d_i})$ onto $\mathcal{B}_\otimes(d_1,\dots,d_l).$ The result then will follow from the fact that  $\mathcal{B}(\otimes_{i=1}^{l}\mathbb{R}^{d_i})$ is an AR, and that any retract of an AR is also an AR \cite[\textsection1.5]{vanMillbook}.

First, for every $P\in\mathcal{B}(\otimes_{i=1}^{l}\mathbb{R}^{d_i})$, let us define the $0$-symmetric convex bodies $P^i\subset\mathbb{R}^{d_i},$ $i=1,\dots,l,$ associated to $P$ as
$$
P^i:=\left\{x^i\in\mathbb{R}^{d_i}:\frac{e^{d_1}_{1}\otimes\cdots\otimes e^{d_{i-1}}_{1}\otimes x^i\otimes e^{d_{i+1}}_{1}\otimes\cdots\otimes e^{d_l}_{1}}{g_{P}(e^{d_1}_{1}\otimes e^{d_2}_{1}\otimes\cdots\otimes e^{d_l}_{1})}\in P\right\},
$$
when $i\leq l-1,$ and
$
P^l:=\left\{x^l\in\mathbb{R}^{d_l}:e^{d_1}_{1}\otimes\cdots\otimes e^{d_{l-1}}_{1}\otimes x^l\in P\right\}.
$
It is not difficult to check that the Minkowski functional associated to each $P^i$ is such that
$$
g_{P^i}(x^i)=\frac{g_P(e^{d_1}_{1}\otimes\cdots\otimes e^{d_{i-1}}_{1}\otimes x^i\otimes e^{d_{i+1}}_{1}\otimes\cdots\otimes e^{d_l}_{1})}{g_{P}(e^{d_1}_{1}\otimes e^{d_2}_{1}\otimes\cdots\otimes e^{d_l}_{1})},
$$
$i=1,\dots,l-1,$ and $g_{P^l}(x^l)=g_P(e^{d_1}_{1}\otimes\cdots\otimes e^{d_{l-1}}_{1}\otimes x^l).$ Indeed, for every $i,$ the map from $\mathcal{B}(\otimes_{i=1}^{l}\mathbb{R}^{d_i})$ to $\mathcal{B}(d_i)$ sending $P\mapsto P^i$ must be continuous. To prove this, recall that for every sequence $\{P_n\}_n$ and $P$ in $\mathcal{B}(\otimes_{i=1}^{l}\mathbb{R}^{d_i}),$ if $P_n$ converges to $P,$ then the sequence of the Minkowski functionals $\left\{g_{P_n}\right\}_n$ converges uniformly to $g_P$ on compact sets of $\otimes_{i=1}^{l}\mathbb{R}^{d_i}$ (\cite[Lemma 3.1]{topologytensorialbodies}). Hence, for every $x^i\in\mathbb{R}^{d_i},$ we have that $g_{P_n^i}(x^i)$ approaches to $g_{P^i}(x^i)$. So, the sequence $\left\{g_{P_n^i}\right\}_n$ converges pointwise to $g_{P^i}$ for each $i.$ Since the pointwise convergence of Minkowski functionals implies the uniform convergence on compact sets, $\left\{g_{P_n^i}\right\}_n$ converges  uniformly to $g_{P^i}$ on compact sets of $\mathbb{R}^{d_i},$ $i=1,\ldots,l.$ In particular, each $P_n^i$ converges to $P^i$ with respect to the Hausdorff metric \cite[Lemma 3.1]{topologytensorialbodies}. This proves that  the map in question is continuous.
%due to the  pointwise convergence of the minkowski functionals  %$g_{P_n^i}$ to $g_P^i$ implies uniform convergence on compact sets %of $\mathbb{R}^{d_i},$ we have that $P_n^i$ converges to $P^i$ in %the Hausdorff metric \cite[Lemma 3.1]{topologytensorialbodies}. T

Next, we claim that the map $\eta:\mathcal{B}(\otimes_{i=1}^{l}\mathbb{R}^{d_i})\rightarrow\mathcal{B}_\otimes(d_1,\dots,d_l)$ defined as,
$$
\eta(P)=\text{conv}(P\cup P^1\otimes_\pi P^2\otimes_\pi\cdots\otimes_\pi P^l)\cap P^1\otimes_\epsilon P^2\otimes_\epsilon\cdots\otimes_\epsilon P^l,
$$
 is the desired retraction. To prove this, notice that for every $0$-symmetric convex body $P\subset\otimes_{i=1}^{l}\mathbb{R}^{d_i},$ $\eta(P)$ is a $0$-symmetric convex body such that
$$
P^1\otimes_\pi P^2\otimes_\pi\cdots\otimes_\pi P^l\subseteq \eta(P)\subseteq P^1\otimes_\epsilon P^2\otimes_\epsilon\cdots\otimes_\epsilon P^l,
$$
and thus, by (\ref{eq:definition tensorbody}), $\eta(P)$ is a tensorial body. Furthermore,  from the previous argument in combination with the continuity of both the projective $\otimes_\pi$ and the injective $\otimes_\epsilon$ tensor products \cite[Proposition 3.3]{topologytensorialbodies}, and the fact that all the operators involved are continuous  in the class of $0$-symmetric convex bodies (see \cite[Section 1.8]{Schneider1993}), we know that $\eta$ is well-defined and continuous.

Finally, if $P$ is a tensorial body in $\otimes_{i=1}^{l}\mathbb{R}^{d_i},$ then, by Remark \ref{rem:Q-ifixed}, we know that $P^1\otimes_\pi P^2\otimes_\pi\cdots\otimes_\pi P^l\subseteq P\subseteq P^1\otimes_\epsilon P^2\otimes_\epsilon\cdots\otimes_\epsilon P^l.$ Therefore $\eta(P)=P,$ and this shows that $\eta$ is a retraction onto the space of tensorial bodies.
The last part of the statement now follows from the fact that $\mathcal{L}_\otimes(d_1,\dots,d_l)$ is a retract of the space of tensorial bodies \cite[Corollary 4.10]{topologytensorialbodies}, and thus it is also an AR.
\end{proof}

As it was described in Section \ref{sec:tensorialbodiesprev}, for a fixed $l$-tuple $(P_1,\dots,P_l)\in\mathcal{B}(d_1)\times\cdots\times\mathcal{B}(d_l),$ the hyperspace $\mathcal{B}_{P_1,\dots,P_l}(d_1,\dots,d_l)$ consists of the tensorial bodies $P\subset\otimes_{i=1}^{l}\mathbb{R}^{d_i}$ such that
$$
P_{1}\otimes_{\pi}\cdots\otimes_{\pi}P_{l}\subseteq P\subseteq P_{1}\otimes_{\epsilon}\cdots\otimes_{\epsilon}P_{l}.
$$
In \cite[Proposition 3.1]{tensorialbodies}, it is proved that this hyperspace coincides with the set of closed unit balls of reasonable crossnorms on the space $\otimes_{i=1}^{l}M_i,$ with $M_i=(\mathbb{R}^{d_i},g_{P_i})$. In this case, the map sending $P$ to its Minkowski functional $g_P$ gives us a bijective correspondence between $\mathcal{B}_{P_1,\dots,P_l}(d_1,\dots,d_l)$ and the set of reasonable crossnorms on $\otimes_{i=1}^{l}M_i.$ Since, each tensorial body $P\subset\otimes_{i=1}^{l}\mathbb{R}^{d_i}$ belongs to $\mathcal{B}_{P_1,\dots,P_l}(d_1,\dots,d_l),$ for some $P_i\in\mathcal{B}(d_i),$ $i=1,\dots,l,$  the following relation holds:
$$
\mathcal{B}_{\otimes}(d_1,\dots.,d_l)=\bigcup_{(P_1,\dots,P_l)\in\mathcal{B}(d_1)\times\cdots\times\mathcal{B}(d_l)}\mathcal{B}_{P_1,\dots,P_l}(d_1,\dots,d_l).
$$
 It is noteworthy that, in contrast with $\mathcal{B}_{\otimes}(d_1,\dots.,d_l)$, each $\mathcal{B}_{P_1,\dots,P_l}(d_1,\dots,d_l)$ is a compact and convex subspace of  $\mathcal{B}(\otimes_{i=1}^{l}\mathbb{R}^{d_i})$  \cite[Proposition 3.4]{topologytensorialbodies}. These properties allow us to determine its topological structure.

\begin{proposition}
\label{prop:globosHilbertcube}
Let $d_i\geq 2$, $i=1,\ldots,l.$  Then, for every tuple of $0$-symmetric convex bodies  $P_i\subset\mathbb{R}^{d_i}$, the hyperspace $\mathcal{B}_{P_1,\ldots,P_l}(d_1,\ldots,d_l)$ is homeomorphic to $\mathcal{Q}.$ 
\end{proposition}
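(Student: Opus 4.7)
The plan is to realize $\mathcal{B}_{P_1,\ldots,P_l}(d_1,\ldots,d_l)$ as a hyperspace of exactly the form $csb_A(K)$ handled by Lemma \ref{lem:infinitedimension}, and then invoke that lemma directly. Set $A := P_1 \otimes_\pi \cdots \otimes_\pi P_l$ and $K := P_1 \otimes_\epsilon \cdots \otimes_\epsilon P_l$, regarded as $0$-symmetric convex bodies in $\otimes_{i=1}^l \mathbb{R}^{d_i} \cong \mathbb{R}^d$ (with the Euclidean structure coming from $\langle\cdot,\cdot\rangle_H$). By the definition (\ref{eq:definition tensorbody}) of a tensorial body with respect to $P_1,\ldots,P_l$, a $0$-symmetric convex body $P\subset \otimes_{i=1}^l\mathbb{R}^{d_i}$ lies in $\mathcal{B}_{P_1,\ldots,P_l}(d_1,\ldots,d_l)$ if and only if $A\subseteq P\subseteq K$. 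Hence we get the set-theoretic equality
\begin{equation*}
\mathcal{B}_{P_1,\ldots,P_l}(d_1,\ldots,d_l) = csb_A(K).
\end{equation*}
Since the Hausdorff metric on the right-hand side is the one inherited from $\mathcal{B}(\otimes_{i=1}^l\mathbb{R}^{d_i})$, this is also a topological identification.

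Next, the one thing to verify in order to apply Lemma \ref{lem:infinitedimension} is the strict inclusion $A \subsetneq K$. By (\ref{eq:relProjInjecnorm}), the Minkowski functionals $g_A$ and $g_K$ coincide respectively with the projective and injective tensor norms associated to the norms $g_{P_1},\ldots,g_{P_l}$. On a tensor product $\otimes_{i=1}^l(\mathbb{R}^{d_i},g_{P_i})$ with $l\geq 2$ and $d_i\geq 2$ these two reasonable crossnorms are known to disagree: one can exhibit a rank-two tensor of the form $x^1\otimes u + y^1\otimes v$, with $x^1,y^1$ linearly independent in $\mathbb{R}^{d_1}$ and $u,v$ linearly independent decomposable vectors in $\otimes_{i=2}^l\mathbb{R}^{d_i}$, on which $\epsilon(\cdot) < \pi(\cdot)$. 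Equivalently, there is a point in $K\setminus A$, so $A\subsetneq K$.

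Once the strict inclusion is in hand, Lemma \ref{lem:infinitedimension} immediately gives that $csb_A(K)$ is homeomorphic to $\mathcal{Q}$, finishing the proof. Essentially the only nonroutine step is the strict inequality $A\subsetneq K$; everything else is direct translation between definitions. It is worth noting that the proof does not rely on any of the deeper structural results about $\mathcal{B}_\otimes$ from Sections \ref{sec:basictensorialbodies}--\ref{sec:relatedhyperspaces}, only on the basic characterisation of tensorial bodies by the two-sided sandwich condition and on the general Hilbert cube lemma established earlier.
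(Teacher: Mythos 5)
Your proof is correct and takes essentially the same route as the paper: identify $\mathcal{B}_{P_1,\ldots,P_l}(d_1,\ldots,d_l)$ with $csb_A(K)$ for $A=P_1\otimes_\pi\cdots\otimes_\pi P_l$ and $K=P_1\otimes_\epsilon\cdots\otimes_\epsilon P_l$, check $A\subsetneqq K$, and invoke Lemma \ref{lem:infinitedimension}. The only difference is cosmetic: for the strict inclusion the paper simply cites \cite[Theorem 2]{xorgames}, whereas you sketch the standard rank-two tensor argument showing $\epsilon<\pi$ somewhere; either justification suffices.
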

\begin{proof}
The proof is a direct consequence of Lemma \ref{lem:infinitedimension}, and the fact that  $P_1\otimes_\pi\cdots\otimes_\pi P_l\subsetneqq P_1\otimes_\epsilon\cdots\otimes_\epsilon P_l$ for every $l$-tuple $P_i\in\mathcal{B}(d_i),$ $i=1,\dots,l,$ \cite[Theorem 2]{xorgames}.
\end{proof}

\section{Topology of the slice $\mathcal{L}_\otimes(d_1,\dots,d_l)$}
\label{sec:mainresults}

In this section, we achieve our main goal. Namely, we show that $\mathcal{L}_{\otimes}(d_1,\ldots,d_l)$ is homeomorphic to the Hilbert cube $\mathcal{Q}$ (Theorem \ref{thm:maintheorem}) and, in consequence, the space of tensorial bodies $\mathcal{B}_{\otimes}(d_1,\ldots,d_l)$ is homeomorphic to $\mathcal{Q}\times\mathbb{R}^p$ (Corollary \ref{cor:mainresultcor}). 

Recall that  $\mathcal{L}_{\otimes}(d_1,\ldots,d_l)$ consists of the tensorial bodies $P\subset\otimes_{i=1}^l\mathbb{R}^{d_i}$ for which $\ell_{\otimes}(P)=B_{2}^{d_{1},\ldots,d_{l}}.$ It is known that $\mathcal{L}_{\otimes}(d_1,\ldots,d_l)$ is a compact $O_\otimes$-global slice for the $GL_\otimes$-space $\mathcal{B}_{\otimes}(d_1,\ldots,d_l)$ (see Theorem \ref{thm:resumenthm} 
and \cite[Sections 4.1 and 4.2]{topologytensorialbodies}).
For simplicity, we denote by $\Pi\cap\mathcal{L}_{\otimes}$ the subset $\Pi(d_1,\ldots,d_l)\cap\mathcal{L}_{\otimes}(d_1,\ldots,d_l)$ consisting of the  tensor products $P_1\otimes_\pi\cdots\otimes_\pi P_l,$ $P_i\in\mathcal{B}(d_i),$ $i=1,\dots,l,$ such that $\ell_{\otimes}(P_1\otimes_\pi\cdots\otimes_\pi P_l)=B_{2}^{d_{1},\ldots,d_{l}}.$ 
We follow the notation of \cite[Remark 1]{banacamazurcompactum} (c.f. \cite[Section 3.2]{AntonyanNatalia}), where the class of $0$-symmetric convex bodies $P\subset\mathbb{R}^d$ such that $Low(P)=B_2^d$ is denoted by $L(d).$  

\begin{proposition}
\label{prop:Ltensorpoyectivecontraible}
The following hold:
\begin{enumerate}
\item $\Pi\cap\mathcal{L}_{\otimes}$ is an $O_\otimes$-strong deformation retract of $\mathcal{L}_{\otimes}(d_1,\ldots,d_l).$

\item %\label{prop: porjectivetensor homeo}
The map $\otimes_\pi$ restricted to $L(d_1)\times\cdots\times L(d_l)$ is a homeomorphism onto $\Pi\cap\mathcal{L}_{\otimes}.$ In particular, $\Pi\cap\mathcal{L}_{\otimes}$ is homeomorphic to $\mathcal{Q}$.

\item $\Pi\cap\mathcal{L}_{\otimes}$ is  $O_\otimes$-strictly contractible to the $O_\otimes$-fixed point $B_2^{d_1}\otimes_\pi\cdots\otimes_\pi B_2^{d_l}.$ 
\end{enumerate}
\end{proposition}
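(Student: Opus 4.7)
The plan is to dispose of the three items in sequence: (1) by restricting the homotopy of Proposition \ref{prop:convisproper}, (2) by combining the $H$-orbit description \eqref{eq:inverseimage} with the bodies $X^i$ of Remark \ref{rem:Q-ifixed} and the multiplicativity of the Löwner ellipsoid under $\otimes_\pi$, and (3) by building an equivariant contraction on $L(d_1)\times\cdots\times L(d_l)$ and transporting it via the homeomorphism from (2).

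\textbf{Part (1).} I would simply restrict the $GL_\otimes$-homotopy $W(P,t)=(1-t)P+t\,conv_\otimes(P)$ from Proposition \ref{prop:convisproper} to $\mathcal{L}_\otimes\times[0,1]$. The only point to check is that the image lies in $\mathcal{L}_\otimes$. For $P\in\mathcal{L}_\otimes$ with associated bodies $P^1,\dots,P^l$ from Remark \ref{rem:Q-ifixed}, both $P$ and $conv_\otimes(P)=P^1\otimes_\pi\cdots\otimes_\pi P^l$ lie in the convex hyperspace $\mathcal{B}_{P^1,\dots,P^l}$, hence so does $W(P,t)$. Since $conv_\otimes$ is constant on $\mathcal{B}_{P^1,\dots,P^l}$, so is $\ell_\otimes=Low\circ conv_\otimes$, giving $\ell_\otimes(W(P,t))=\ell_\otimes(P)=B_2^{d_1,\ldots,d_l}$. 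The $O_\otimes$-equivariance is inherited from the $GL_\otimes$-equivariance of $W$, and the points of $\Pi\cap\mathcal{L}_\otimes$ are fixed exactly as in Proposition \ref{prop:convisproper}.

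\textbf{Part (2).} The pivotal fact to invoke is the multiplicativity of the Löwner ellipsoid under $\otimes_\pi$, namely $Low(P_1\otimes_\pi\cdots\otimes_\pi P_l)=Low(P_1)\otimes_2\cdots\otimes_2 Low(P_l)$, coupled with the identity $B_2^{d_1}\otimes_2\cdots\otimes_2 B_2^{d_l}=B_2^{d_1,\ldots,d_l}$ from Theorem \ref{thm:resumenthm}(3). This makes the restriction $\otimes_\pi\colon L(d_1)\times\cdots\times L(d_l)\to\Pi\cap\mathcal{L}_\otimes$ well defined. For injectivity, if two tuples in $L(d_1)\times\cdots\times L(d_l)$ have the same $\otimes_\pi$, formula \eqref{eq:inverseimage} forces $Q_i=\lambda_i P_i$ with $\prod\lambda_i=1$, whence $\lambda_i Low(P_i)=Low(Q_i)=B_2^{d_i}$ gives $\lambda_i=1$. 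For surjectivity, given $X\in\Pi\cap\mathcal{L}_\otimes$, Remark \ref{rem:Q-ifixed} yields $X=X^1\otimes_\pi\cdots\otimes_\pi X^l$; the multiplicativity together with the uniqueness of a $\otimes_2$-decomposition (analogous to \cite[Proposition 3.6]{tensorialbodies}) forces each $Low(X^i)$ to be a Euclidean ball, so one rescales via \eqref{eq:projectivelambda} to put each factor in $L(d_i)$ without changing $X$. Finally, $L(d_i)\cong\mathcal{Q}$ by \cite[Theorem 1.4]{antonyananerviowest} makes the domain a compact Hausdorff space, so the continuous bijection is automatically a homeomorphism, and $\mathcal{Q}^l\cong\mathcal{Q}$ (\cite[Excercise 2, \textsection 1.1]{vanMillbook}) yields the ``in particular''.

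\textbf{Part (3) and expected obstacle.} Through the homeomorphism of (2) it suffices to strictly contract $L(d_1)\times\cdots\times L(d_l)$ to $(B_2^{d_1},\ldots,B_2^{d_l})$. Taking the $O(d_i)$-equivariant retraction $r_i:\mathcal{B}(d_i)\to L(d_i)$ from \cite[Corollary 3.9]{AntonyanNatalia} (recalled in the proof of Theorem \ref{thm:structurePi}), set $H_i(P_i,t)=r_i((1-t)P_i+tB_2^{d_i})$. Each $H_i$ starts at the identity of $L(d_i)$, ends constantly at $B_2^{d_i}$, fixes $B_2^{d_i}$ for every $t$ (since $r_i(B_2^{d_i})=B_2^{d_i}$), and is $O(d_i)$-equivariant. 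The product homotopy then transports through the homeomorphism of (2) to the desired strict contraction on $\Pi\cap\mathcal{L}_\otimes$. The main delicate point is full $O_\otimes$-equivariance: a general $T\in O_\otimes$ decomposes as $T=(T_1\otimes\cdots\otimes T_l)U_\sigma$ with $T_i\in O(d_i)$ and $\sigma$ permuting only indices of equal dimension, and by \eqref{eq:gltensor image of proj and inj tp} the transported action on $L(d_1)\times\cdots\times L(d_l)$ is $(P_1,\ldots,P_l)\mapsto(T_1P_{\sigma(1)},\ldots,T_lP_{\sigma(l)})$. Equivariance under the product factor $O(d_1)\times\cdots\times O(d_l)$ is obvious; to also commute with the permutations $U_\sigma\in\mathcal{P}$ one must pick the same formula for $r_i$ whenever two dimensions coincide, so that $r_i=r_j$ whenever $d_i=d_j$. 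With this dimensional uniformity built in, the transported homotopy is $O_\otimes$-equivariant and the contractibility is strict because $B_2^{d_1}\otimes_\pi\cdots\otimes_\pi B_2^{d_l}$ is fixed by $O_\otimes$ (the $T_i$'s fix $B_2^{d_i}$ and $\sigma$ only swaps equal-dimension Euclidean balls).
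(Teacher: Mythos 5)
Your parts (1) and (2) match the paper's proof essentially line by line: part (1) restricts the homotopy $W(P,t)=(1-t)P+t\,conv_\otimes(P)$ to $\mathcal{L}_\otimes$, using the same facts (convexity of $\mathcal{B}_{P^1,\dots,P^l}$, invariance of $\ell_\otimes$); part (2) uses the L\"owner multiplicativity $Low(R_1\otimes_\pi\cdots\otimes_\pi R_l)=Low(R_1)\otimes_2\cdots\otimes_2Low(R_l)$, the $H$-orbit description for injectivity, Remark~\ref{rem:Q-ifixed} together with rescaling for surjectivity, and compactness of $L(d_1)\times\cdots\times L(d_l)$. All of this is the paper's argument.

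Part (3) is also, after simplification, the same homotopy as in the paper: your $r_i$-wrapper is vacuous on the relevant range, since for $P_i\in L(d_i)$ one has $(1-t)P_i+tB_2^{d_i}\subseteq B_2^{d_i}$ with the same contact points, so by John's characterization $Low((1-t)P_i+tB_2^{d_i})=B_2^{d_i}$ and $r_i$ acts as the identity; transporting the product of the $H_i$'s through $\otimes_\pi$ gives exactly the paper's $F(P,t)=((1-t)f_1(P)+tB_2^{d_1})\otimes_\pi\cdots\otimes_\pi((1-t)f_l(P)+tB_2^{d_l})$. However, there is a genuine gap in your justification of \emph{strictness}. You write that ``the contractibility is strict because $B_2^{d_1}\otimes_\pi\cdots\otimes_\pi B_2^{d_l}$ is fixed by $O_\otimes$.'' That is a non sequitur: the base point being an $O_\otimes$-fixed point is needed for the map to be an $O_\otimes$-equivariant contraction to a point at all, but strictness is the separate requirement that for $t\in(0,1)$ the only $P$ with $F(P,t)$ equal to the base point is $P$ itself. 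The paper verifies this via \cite[Proposition 3.6]{tensorialbodies}: if $F(P,t)=B_2^{d_1}\otimes_\pi\cdots\otimes_\pi B_2^{d_l}$ for $t\in(0,1)$, then each $(1-t)f_i(P)+tB_2^{d_i}=\lambda_iB_2^{d_i}$ with $\prod\lambda_i=1$, and taking L\"owner ellipsoids forces $\lambda_i=1$, whence $f_i(P)=B_2^{d_i}$ for every $i$ and $P$ is the base point. (In your framework this amounts to the elementary observation that $(1-t)P_i+tB_2^{d_i}=B_2^{d_i}$ with $t\in(0,1)$ forces $P_i=B_2^{d_i}$, since every point of the sphere is extreme in $B_2^{d_i}$.) You should include this verification; as written, the strictness claim is unsupported.
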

\begin{proof}
1. Let us denote by $conv_{\otimes|\mathcal{L}_{\otimes}}$ the restriction of the $GL_\otimes$-map $conv_{\otimes}$ to   $\mathcal{L}_{\otimes}(d_1,\ldots,d_l).$ Below we prove that $conv_{\otimes|\mathcal{L}_{\otimes}}$ is an $O_\otimes$-strong deformation retraction.
To this end, observe that $conv_{\otimes|\mathcal{L}_{\otimes}}$ maps $\mathcal{L}_{\otimes}$ onto $\Pi\cap\mathcal{L}_{\otimes},$ due to the map $conv_{\otimes}$ being a retraction of $\mathcal{B}_\otimes$ onto $\Pi$ \cite[Proposition 4.4]{topologytensorialbodies} and
$
\ell_\otimes(conv_{\otimes}(P))=\ell_\otimes(P)=B_{2}^{d_{1},\ldots,d_{l}},
$
for every $P\in\mathcal{L}_{\otimes}(d_1,\ldots,d_l).$

 Now let us define $H:\mathcal{L}_{\otimes}(d_1,\ldots,d_l)\times[0,1]\rightarrow\mathcal{L}_{\otimes}(d_1,\ldots,d_l)$ as 
$$H(P,t)=(1-t)P+tconv_\otimes(P).$$
We will prove that $H$  is the desired $O_\otimes$-homotopy. Notice that $H$ is well-defined, since $conv_\otimes(P)=P^1\otimes_\pi\cdots\otimes_\pi P^1$, where $P^1,\dots,P^l$ are the convex bodies associated to $P$ in Remark \ref{rem:Q-ifixed}, see \cite[p. 12]{topologytensorialbodies}.
In consequence, both $P$ and $conv_\otimes(P)$ belong to $\mathcal{B}_{P^1,\dots,P^l}(d_1,\dots,d_1),$ which is convex \cite[Proposition 3.4]{topologytensorialbodies}. Thus, $H(P,t)$ is a tensorial body such that
$\ell_\otimes((1-t)P+tconv_\otimes(P))=\ell_\otimes(P)=B_2^{d_1,\ldots,d_l}.$ The $O_\otimes$-equivariance and continuity of $H$ follow directly from those of the map $conv_\otimes$ and the properties of the Minkowski sum.
Clearly, $H(P,0)=P$ and $H(P,1)=conv_\otimes(P)$ for every $P\in\mathcal{L}_{\otimes}(d_1,\dots,d_l).$ Furthermore, if $P\in\Pi\cap\mathcal{L}_{\otimes},$ then $conv_\otimes(P)=P$ and $H(P,t)=P$ for any $t\in[0,1].$ Thus, $H$ is an $O_\otimes$-strong homotopy as required.

\

2. Let us first prove that $\otimes_\pi$ restricted to $L(d_1)\times\cdots\times L(d_l)$ is a homeomorphism onto its image $\otimes_\pi(L(d_1),\ldots,L(d_l)).$ 
To this end, notice that  $\otimes_{\pi|L(d_1)\times\cdots\times L(d_l)}$ is surjective and, since $\otimes_\pi$ is continuous and $L(d_1)\times\cdots\times L(d_l)$ is compact (\cite[Proposition 3.4]{AntonyanNatalia}), $\otimes_{\pi|L(d_1)\times\cdots\times L(d_l)}$ must be a closed map. To prove that it is injective, suppose that $R_1\otimes_\pi\cdots\otimes_\pi R_l=P_1\otimes_\pi\cdots\otimes_\pi P_l,$ with $R_i,P_i\in L(d_i),$ then, by \cite[Proposition 3.6]{tensorialbodies}, there exist $\lambda_i>0,$ $i=1,\ldots,l,$ such that $\lambda_1\cdots\lambda_l=1$ and  $P_i=\lambda_{i}R_i.$ However, each pair $R_i,P_i\in L(d_i),$ hence $B_2^{d_i}=Low(P_i)=\lambda_i Low(R_i)=\lambda_i B_2^{d_i},$ and so  $\lambda_i=1.$  This shows that $P_i=R_i$ and therefore  $\otimes_{\pi|L(d_1)\times\cdots\times L(d_l)}$ is a homeomorphism.

Now we will prove that $\otimes_\pi(L(d_1),\ldots,L(d_l))=\Pi\cap\mathcal{L}_{\otimes}.$ Let $R_i\in L(d_i)$ for $i=1,\ldots,l.$ Since the L\"{o}wner ellipsoid of $R_1\otimes_\pi\cdots\otimes_\pi R_l$ is the Hilbertian tensor product $Low(R_1)\otimes_2\cdots\otimes_2Low(R_l)$  (see \cite[Lemma 1]{Aubrun2006}), we have
\begin{equation*}
    \ell_\otimes(R_1\otimes_\pi\cdots\otimes_\pi R_l)=Low(R_1)\otimes_2\cdots\otimes_2Low(R_l)
    =B_2^{d_1}\otimes_2\cdots\otimes_2B_2^{d_l}
    =B_2^{d_1,\ldots,d_l}.
\end{equation*}
Hence $\otimes_\pi(L(d_1),\ldots,L(d_l))$ is contained in $\Pi\cap\mathcal{L}_{\otimes}.$ 
On the other hand, if $R\in\Pi\cap\mathcal{L}_{\otimes},$ then $R=conv_\otimes(R)=R^1\otimes_\pi\cdots\otimes_\pi R^l$ (see \cite[p. 12]{topologytensorialbodies}) and 
$$\ell_\otimes(R)=Low(R^1)\otimes_2\cdots\otimes_2Low(R^l)
    =B_2^{d_1}\otimes_2\cdots\otimes_2B_2^{d_l}.$$
Thus, by \cite[Proposition 3.6]{tensorialbodies}, there exist $\lambda_i>0,$ $i=1,\ldots,l,$ such that $\lambda_1\cdots\lambda_l=1$ and $Low(R^i)=\lambda_{i}B_2^{d_i}.$ Therefore,  $P_i:=\lambda_{i}^{-1}R^i$ belongs to $L(d_i)$ and, from \cite[Proposition 5]{maiteluisa2}, we have
\begin{align*}
R=R^1\otimes_\pi\cdots\otimes_\pi R^l=\lambda_{1}P_1\otimes_\pi\cdots\otimes_\pi\lambda_{l}P_l
=(\lambda_{1}\cdots\lambda_{l})P_1\otimes_\pi\cdots\otimes_\pi P_l
=P_1\otimes_\pi\cdots\otimes_\pi P_l.
\end{align*}
So, $R\in\otimes_\pi(L(d_1),\ldots,L(d_l)).$ This proves the reverse inclusion.
The result now follows from two facts: Each $L(d_i),$ $i=1,\ldots,l,$ is homemorphic to the Hilbert cube
 $\mathcal{Q}$ \cite{banacamazurcompactum}, and  $\mathcal{Q}^k,$ $k\in\mathbb{N},$ is homeomorphic to $\mathcal{Q}$. 
(see \cite[Excercise 2, \textsection 1.1]{vanMillbook}).

\medskip

3. Let us denote by $\rho_i$, $i=1,\ldots,l,$  the canonical projection of $L(d_1)\times\cdots\times L(d_l)$ onto $L(d_i)$. Also, observe as a consequence of 1., the maps defined by $f_i(P)=\rho_i\circ(\otimes_{\pi|L(d_1)\times\cdots\times L(d_l)})^{-1}(P)$,  $P\in\Pi\cap\mathcal{L}_\otimes,$  are continuous on $\Pi\cap\mathcal{L}_\otimes$ and 
\begin{equation}
\label{eq:identity}
f_1(P)\otimes_\pi\cdots\otimes_\pi f_l(P)=P.
\end{equation}
We will prove that the map $F:\Pi\cap\mathcal{L}_{\otimes}\times[0,1]\rightarrow\Pi\cap\mathcal{L}_{\otimes},$ defined as
$$
F(P,t)=((1-t)f_1(P)+tB_2^{d_1})\otimes_\pi\cdots\otimes_\pi((1-t)f_l(P)+tB_2^{d_l}),
$$
is the desired $O_\otimes$-strict homotopy. 
Since  $Low((1-t)f_i(P)+tB_2^{d_i})=B_2^{d_i}$ for each $i,$ then $F(P,t)\in\Pi\cap\mathcal{L}_\otimes$ and $F$ is well-defined. Its continuity is straightforward from the fact  $\otimes_\pi$  and $f_i$ are continuous. Clearly $F(P,0)=f_1(P)\otimes_\pi\cdots\otimes_\pi f_l(P)=P$ and $F(P,1)=B_2^{d_1}\otimes_\pi\cdots\otimes_\pi B_2^{d_l},$ for every $P$.

To prove that $F_t$ is an $O_\otimes$-map for any $t\in[0,1]$, let $U$ be an orthogonal map in $O_\otimes(d_1,\dots,d_l)$ and $P\in\Pi\cap\mathcal{L}_{\otimes}.$ By  \cite[Lemma A.1]{topologytensorialbodies},  $U=U_1\otimes\cdots\otimes U_l\circ U_\sigma,$ for some $U_i\in O(d_i),$ $i=1,\dots,l,$ and $U_\sigma\in\mathcal{P}.$ Hence, by the identity (\ref{eq:identity}), $UP=U(f_1(P)\otimes_\pi\cdots\otimes_\pi f_l(P)),$ for every $P\in \Pi\cap\mathcal{L}_\otimes$ and
\begin{align*}
F_t(UP)=&F_t(U(f_1(P)\otimes_\pi\cdots\otimes_\pi f_l(P)))\\
=&F_t(U_1f_{\sigma(1)}(P)\otimes_\pi\cdots\otimes_\pi U_l f_{\sigma(l)}(P)) \\
=&((1-t)U_1f_{\sigma(1)}(P)+tB_2^{d_1})\otimes_\pi\cdots\otimes_\pi((1-t)U_lf_{\sigma(l)}(P)+tB_2^{d_l})\\
=&U_1((1-t)f_{\sigma(1)}(P)+tB_2^{d_1})\otimes_\pi\cdots\otimes_\pi U_l((1-t)f_{\sigma(l)}(P)+tB_2^{d_l})\\
%=U_1\otimes\cdots\otimes U_l((1-t)f_{\sigma(1)}(Q)+tB_2^{d_1})\otimes_\pi\cdots\otimes_\pi((1-t)f_{\sigma(1)}(Q)+tB_2^{d_1})\\
\overset{*}{=}&U_1\otimes\cdots\otimes U_l U_\sigma ((1-t)f_{1}(P)+tB_2^{d_1})\otimes_\pi\cdots\otimes_\pi((1-t)f_{l}(P)+t B_2^{d_1})\\
=&UF_t(P).
\end{align*}
(*) Follows from the identity (\ref{eq:gltensor image of proj and inj tp}).
Finally, for $t\in(0,1),$ $F(P,t)=B_2^{d_1}\otimes_\pi\cdots\otimes_\pi B_2^{d_l}$ only if there is  $\lambda_i>0,$ $i=1,\ldots,l,$ such that $\lambda_1\cdots\lambda_l=1$ and $(1-t)f_i(P)+tB_2^{d_i}=\lambda_iB_2^{d_i}$ \cite[Proposition 3.6]{tensorialbodies}.  Then, $Low((1-t)f_i(P)+tB_2^{d_i})=Low(\lambda_iB_2^{d_i})$ and $B_2^{d_i}=\lambda_iB_2^{d_i}.$ Thus each $\lambda_i=1$ and, due to $t\in(0,1),$ $f_i(P)=B_2^{d_i}$ for $i=1,\dots,l.$ Therefore, $P=B_2^{d_1}\otimes_\pi\cdots\otimes_\pi B_2^{d_l}.$ This proves that $F$ is a strict $O_\otimes$-homotopy.
\end{proof}

In Proposition \ref{prop:Ltensorpoyectivecontraible} above, we have showed that $\otimes_{\pi|L(d_1)\times\cdots\times L(d_l)}$ is a homeomorphism onto $\Pi\cap\mathcal{L}_{\otimes}.$ In this way, since for each $P\in\mathcal{L}_{\otimes}(d_1,\ldots,d_l),$ $conv_\otimes(P)\in\Pi\cap\mathcal{L}_{\otimes},$  we know that there is only one $l$-tuple $(\bar{P}_1,\ldots,\bar{P}_l)$ in $L(d_1)\times\cdots\times L(d_l)$ such that $conv_\otimes(P)=\bar{P}_1\otimes_\pi\cdots\otimes_\pi\bar{P}_l.$ 

A map $f:A\rightarrow A,$ defined on a metric space $(A,d_A),$ will be called $\varepsilon$-close to the identity of $A$ if and only if $d_A(f(x),x)<\varepsilon,$ for every $x\in A$. 
\begin{lemma}
\label{lem:metalema}
For each $i=1,\ldots,l,$ let $\mathcal{A}_i$ be a non-empty subset of $L(d_i),$ and denote by $\mathcal{A}$  the set 
$(\text{conv}_{\otimes|\mathcal{L}_{\otimes}})^{-1}(\otimes_\pi(\mathcal{A}_1,\ldots,\mathcal{A}_l))\subseteq\mathcal{L}_\otimes(d_1,\ldots,d_l)$. For every $\rho\in(0,\infty),$ let $\mathcal{F}_\rho$  be the family of all $l$-tuples $(f^1,\ldots,f^l)$ of continuous maps 
$f^i:\mathcal{A}_i\rightarrow\mathcal{A}_i$ such that each $f_i$ is $\rho$-close to the identity of  $\mathcal{A}_i$.
%$f^i:\mathcal{A}_i\rightarrow\mathcal{A}_i$  $\rho$-close to the identity of each $\mathcal{A}_i$.
Then for every $\varepsilon>0,$ there exists $\delta>0$ such that for each $(f^1,\ldots,f^l)\in\mathcal{F}_\delta,$  the map $\tilde{f}_\varepsilon:\mathcal{A}\rightarrow\mathcal{A}$  defined as,
\begin{equation}
\label{eq:liftingmap}
\tilde{f}_\varepsilon(P)=\text{conv}\left(P\cup f^1(\bar{P}_1)\otimes_\pi\cdots\otimes_\pi f^l(\bar{P}_l)\right)\cap f^1(\bar{P}_1)\otimes_\epsilon\cdots\otimes_\epsilon f^l(\bar{P}_l),
\end{equation}
is a continuous map $\varepsilon$-close to the identity of $\mathcal{A}.$
% $$
% \mathcal{F}_\rho=\{(f^1,\ldots,f^l):\text{ each map }f^i\text{ is } \rho\text{-close to the identity of } \mathcal{A}_i\}
% $$ 
% maps such that each $f^i:\mathcal{A}_i\rightarrow\mathcal{A}_i$ is $\rho$-close to the identity of $\mathcal{A}_i$. 
% Consider a family  $$\mathcal{F}=\{(f_\rho^1,\ldots,f_\rho^l)\}_{\rho\in\Lambda},\text{ }\Lambda=(0,\infty),$$  of $l$-tuples  
%  of continuous maps $f^i_\rho:\mathcal{A}_i\rightarrow\mathcal{A}_i$ $\rho$-close to the identity of each $\mathcal{A}_i.$
% Then for every $\varepsilon>0,$ there exists $\delta>0$ and $(f_\delta^1,\ldots,f_\delta^l)\in\mathcal{F},$ such that the map $\tilde{f}_\varepsilon:\mathcal{A}\rightarrow\mathcal{A}$  defined as,
% \begin{equation}
% \label{eq:liftingmap}
%  \tilde{f}_\varepsilon(P)=\text{conv}\left(P\cup f_\delta^1(\bar{P}_1)\otimes_\pi\cdots\otimes_\pi f_\delta^l(\bar{P}_l)\right)\cap f_\delta^1(\bar{P}_1)\otimes_\epsilon\cdots\otimes_\epsilon f_\delta^l(\bar{P}_l),
% \end{equation}
% is a continuous map $\varepsilon$-close to the identity of $\mathcal{A}.$
\end{lemma}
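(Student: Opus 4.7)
My plan is to split the proof into three steps: (i) verify $\tilde{f}_\varepsilon(\mathcal{A})\subseteq\mathcal{A};$ (ii) verify continuity of $\tilde{f}_\varepsilon;$ and (iii) bound $\delta^H(P,\tilde{f}_\varepsilon(P))$ uniformly by a quantity that tends to $0$ with the $\rho$ in $\mathcal{F}_\rho,$ using the compactness of $L(d_1)\times\cdots\times L(d_l).$ For every $P\in\mathcal{A}$ I will write $A=\bar{P}_1\otimes_\pi\cdots\otimes_\pi\bar{P}_l$ and $B=\bar{P}_1\otimes_\epsilon\cdots\otimes_\epsilon\bar{P}_l$ (so that $A\subseteq P\subseteq B$ by the paragraph preceding the lemma), and similarly $A',B'$ for the tuple $(f^1(\bar{P}_1),\ldots,f^l(\bar{P}_l)).$

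For step (i), the definition (\ref{eq:liftingmap}) together with (\ref{eq:definition tensorbody}) places $\tilde{f}_\varepsilon(P)$ in $\mathcal{B}_{f^1(\bar{P}_1),\ldots,f^l(\bar{P}_l)},$ and \cite[Proposition 4.4]{topologytensorialbodies} then gives $conv_\otimes(\tilde{f}_\varepsilon(P))=A'\in\otimes_\pi(\mathcal{A}_1,\ldots,\mathcal{A}_l).$ Since each $f^i(\bar{P}_i)\in L(d_i),$ the L\"{o}wner ellipsoid of $A'$ equals $B_2^{d_1,\ldots,d_l},$ so $\tilde{f}_\varepsilon(P)\in\mathcal{L}_\otimes(d_1,\ldots,d_l)$ and hence in $\mathcal{A}.$ For step (ii), the lifting $P\mapsto(\bar{P}_1,\ldots,\bar{P}_l)$ is continuous (it is $(\otimes_{\pi|L(d_1)\times\cdots\times L(d_l)})^{-1}\circ conv_\otimes,$ with the first factor being a homeomorphism by Proposition \ref{prop:Ltensorpoyectivecontraible}), and composing this lifting with the continuous $f^i,$ then with $\otimes_\pi$ and $\otimes_\epsilon$ (Proposition \ref{prop:resumeprop}), and finally with the continuous convex hull of union and intersection of convex bodies having a common interior point, yields continuity of $\tilde{f}_\varepsilon.$

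The heart of the argument is step (iii). By compactness of $L(d_1)\times\cdots\times L(d_l)$ and continuity of $\otimes_\pi,\otimes_\epsilon,$ given any $\eta>0$ I can choose $\delta>0$ such that $(f^1,\ldots,f^l)\in\mathcal{F}_\delta$ forces $\delta^H(A,A')<\eta$ and $\delta^H(B,B')<\eta$ uniformly in $P\in\mathcal{A}.$ One direction is immediate: since $A'\subseteq A+\eta B_2^{d_1,\ldots,d_l}\subseteq P+\eta B_2^{d_1,\ldots,d_l}$ and the latter set is convex, $\text{conv}(P\cup A')\subseteq P+\eta B_2^{d_1,\ldots,d_l},$ hence $\tilde{f}_\varepsilon(P)\subseteq P+\eta B_2^{d_1,\ldots,d_l}.$ For the reverse direction I plan to compare Minkowski functionals: for $x\in P,$ $g_{\text{conv}(P\cup A')}(x)\leq g_P(x)\leq 1,$ so it suffices to bound $g_{B'}(x).$ John's theorem applied to each $f^i(\bar{P}_i)\in L(d_i)$ gives a positive constant $r=r(d_1,\ldots,d_l)$ with $rB_2^{d_1,\ldots,d_l}\subseteq B'$ \emph{uniformly in $P;$} together with $\delta^H(B,B')<\eta,$ this upgrades to $B\subseteq B'+\eta B_2^{d_1,\ldots,d_l}\subseteq(1+\eta/r)B',$ so $g_{B'}(x)\leq 1+\eta/r$ and therefore $x/(1+\eta/r)\in\tilde{f}_\varepsilon(P)$ lies within $R\eta/r$ of $x,$ where $R$ is the diameter of $B_2^{d_1}\otimes_\epsilon\cdots\otimes_\epsilon B_2^{d_l}.$ Choosing $\eta$ so that both $\eta$ and $R\eta/r$ are below $\varepsilon,$ and taking the corresponding $\delta,$ finishes the estimate.

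The main obstacle is this reverse direction: Hausdorff proximity of $B$ to $B'$ does not on its own yield $P\subseteq\tilde{f}_\varepsilon(P)+\varepsilon B_2^{d_1,\ldots,d_l},$ because $P$ need not be close to $A';$ one must instead use the multiplicative comparison via Minkowski functionals, which requires the uniform inradius bound on $B'$ provided by $\mathcal{A}_i\subseteq L(d_i)$ and John's theorem.
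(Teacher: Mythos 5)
Your proposal is correct, and its overall architecture is the same as the paper's: both pass to the identity $P=\text{conv}\left(P\cup\bar{P}_1\otimes_\pi\cdots\otimes_\pi\bar{P}_l\right)\cap\bar{P}_1\otimes_\epsilon\cdots\otimes_\epsilon\bar{P}_l$ for $P\in\mathcal{A}$, and both use compactness of $L(d_1)\times\cdots\times L(d_l)$ plus uniform continuity of $\otimes_\pi,\otimes_\epsilon$ to choose $\delta$ so that the perturbed projective and injective products $A'$, $B'$ are $\eta$-close in Hausdorff distance to the unperturbed ones $A$, $B$, uniformly over $P\in\mathcal{A}$. The two arguments part ways only in how they convert this $\eta$-closeness of $(A,B)$ to $(A',B')$ into $\varepsilon$-closeness of $P$ to $\tilde f_\varepsilon(P)$. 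The paper appeals to the fact (Schneider, \S1.8) that the composite operation $(X,Y,Z)\mapsto\text{conv}(X\cup Y)\cap Z$ is uniformly continuous when restricted to the compact $\mathcal{L}_\otimes$, and then simply compares $P=\text{conv}(P\cup A)\cap B$ with $\tilde f_\varepsilon(P)=\text{conv}(P\cup A')\cap B'$ coordinatewise. You instead carry out the estimate by hand: the inclusion $\tilde f_\varepsilon(P)\subseteq P+\eta B_2^{d_1,\ldots,d_l}$ is immediate, and for the reverse you extract a uniform inradius $rB_2^{d_1,\ldots,d_l}\subseteq B'$ from John's theorem (valid because each $f^i(\bar P_i)\in L(d_i)$), upgrade $\delta^H(B,B')<\eta$ to the multiplicative bound $B\subseteq(1+\eta/r)B'$, and conclude $x/(1+\eta/r)\in\tilde f_\varepsilon(P)$ for every $x\in P$, with the uniform circumradius of $B_2^{d_1}\otimes_\epsilon\cdots\otimes_\epsilon B_2^{d_l}$ controlling the displacement. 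This is a more elementary, self-contained finish; the paper's is shorter but delegates exactly this uniform-inradius issue (continuity of intersection requires a common interior ball) to the cited reference. Your steps (i) and (ii) — image lies in $\mathcal{A}$ via \cite[Proposition 4.4]{topologytensorialbodies}, and continuity via the homeomorphism $\otimes_{\pi|L(d_1)\times\cdots\times L(d_l)}$ — match the paper.
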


\begin{proof}
To shorten notation, let us simply write $\mathcal{L}_{\otimes}.$ Clearly, $\mathcal{A}$ is a  non-empty subset of $\mathcal{L}_{\otimes}.$ 
Since $\mathcal{L}_\otimes$ is compact \cite[Proposition 4.6]{topologytensorialbodies}, the operations convex hull, union and intersection are uniformly continuous on it  \cite[Section 1.8]{Schneider1993}. For this reason, given $\varepsilon>0,$ there exists $\eta>0$ (only depending on $\varepsilon$) such that if $X,X',Y,Y',Z$ and $Z'$ belong to $\mathcal{L}_\otimes$ and satisfy $\delta^H(X,X')$, $\delta^H(Y,Y')$, $\delta^H(Z,Z')<\eta,$ then
\begin{equation}
\label{eq:auxmap}
\delta^H\left(\text{conv}(X\cup Y)\cap Z,\text{conv}(X'\cup Y')\cap Z'\right)<\varepsilon.
\end{equation} 
% for $X,X',Y,Y',Z$ and $Z'$ in $\mathcal{L}_\otimes,$ and $\delta^H(X,X')$, $\delta^H(Y,Y')$, $\delta^H(Z,Z')<\eta.$
On the other hand, from the compactness of each $L(d_i)$ \cite[Proposition 3.4]{AntonyanNatalia}, it follows that the maps $\otimes_\pi$ and $\otimes_\epsilon$ are uniformly continuous on $L(d_1)\times\cdots\times L(d_l)$ \cite[Proposition 3.3]{topologytensorialbodies}. In consequence, for the above mentioned $\eta,$ there exists $\delta>0$  such that if
$\delta^H(P_i,R_i)<\delta,$ $i=1,\ldots,l,$  then
\begin{equation}
\label{eq:ineqtensorproducts}
\delta^H(P_1\otimes_\alpha\cdots\otimes_\alpha P_l,R_1\otimes_\alpha\cdots\otimes_\alpha R_l)<\eta,\quad\alpha=\pi,\epsilon,
\end{equation} 
 for every pair of tuples  $(P_1,\ldots,P_l)$ and $(R_1,\ldots,R_l)$ in 
$L(d_1)\times\cdots\times L(d_l).$ Observe that $\delta$ only depends of $\varepsilon.$ 
Next, let $(f^1,\ldots,f^l)$ be an $l$-tuple in $\mathcal{F}_\delta$. Then for every $P\in\mathcal{A},$ 
$\delta^H(f^i(\bar{P}_i),\bar{P}_i)<\delta,$ $i=1,\dots,l.$ Hence, by (\ref{eq:ineqtensorproducts}),  
$\delta^H(f^1(\bar{P}_1)\otimes_\alpha\cdots\otimes_\alpha f^l(\bar{P}_l),\bar{P}_1\otimes_\alpha\cdots\otimes_\alpha \bar{P}_l)<\eta,$ $\alpha=\pi,\epsilon.$
Moreover, since $P\in\mathcal{B}_{\bar{P}_1,\ldots,\bar{P}_l}(d_1,\dots,d_l)$, we have that
\begin{equation}
\label{eq:maininclusion}
P=\text{conv}(P\cup \bar{P}_1\otimes_\pi\cdots\otimes_\pi\bar{P}_l)\cap\bar{P}_1\otimes_\epsilon\cdots\otimes_\epsilon\bar{P}_l,
\end{equation}
for each $P\in\mathcal{A}\subseteq\mathcal{L}_\otimes.$ Thus,
\begin{align*}
\delta^H&\left(\text{conv}(P\cup \bar{P}_1\otimes_\pi\cdots\otimes_\pi\bar{P}_l),\text{conv}(P\cup f^1(\bar{P}_1)\otimes_\pi\cdots\otimes_\pi f^l(\bar{P}_l))\right)\\
&\leq\delta^H\left(\bar{P}_1\otimes_\pi\cdots\otimes_\pi \bar{P}_l,f^1(\bar{P}_1)\otimes_\pi\cdots\otimes_\pi f^l(\bar{P}_l)\right)<\eta.
\end{align*}
Therefore, from (\ref{eq:auxmap}), the identity in (\ref{eq:maininclusion}) and the last inequality, it follows that the map $\tilde{f}_\varepsilon$ associated to the tuple $(f^1,\ldots,f^l)$ satisfies that $\delta^H(\tilde{f}_\varepsilon(P),P)<\varepsilon.$
We claim that $\tilde{f}_\varepsilon(P)\in\mathcal{A},$ for each $P\in\mathcal{A}.$ Indeed, if $P\in\mathcal{A},$
notice that $f^i(\bar{P}_i)\in\mathcal{A}_i,$ $i=1,\dots,l,$ and $\tilde{f}_\varepsilon(P)\in\mathcal{B}_{f^1(\bar{P}_1),\ldots,f^l(\bar{P}_l)}(d_1,\dots,d_l)$. Then, by \cite[(4.1)]{topologytensorialbodies}, 
$conv_\otimes(\tilde{f}_\varepsilon(P))=f^1(\bar{P}_1)\otimes_\pi\cdots\otimes_\pi f^l(\bar{P}_l)$ belongs to $\otimes_\pi(\mathcal{A}_1,\ldots,\mathcal{A}_l).$ This shows that $\tilde{f}_\varepsilon$ is a map $\varepsilon$-close  to the identity of $\mathcal{A}.$

It remains to prove that $\tilde{f}_\varepsilon:\mathcal{A}\rightarrow\mathcal{A}$ is continuous. Indeed, this follows directly from the continuity of the tensor products $\otimes_\pi$, $\otimes_\epsilon$ on $\mathcal{B}(d_1)\times\cdots\times\mathcal{B}(d_1)$ \cite[Proposition 3.3]{topologytensorialbodies}, and  the continuity of the operations convex hull, union and intersection on the hyperspace of $0$-symmetric convex bodies.
\end{proof}

In \cite[Lemma 5.6]{antonyananerviowest}, it is proved that $\{B_2^d\},$ $d\geq2,$ is a $Z$-set in $L(d).$ To achieve this, a family of maps $\mathcal{X}_\rho:L(d)\rightarrow L_0(d),$ $\rho>0,$ $\rho$-close to the identity of $L(d)$ is constructed. Here $L_0(d):=L(d)\setminus\{B_2^d\}.$ 
In the next proposition, we use Lemma \ref{lem:metalema} to prove that
$$
\mathcal{L}_\otimes^{*}(d_1,\ldots,d_l):=\left\{P\in\mathcal{L}_\otimes(d_1,\ldots,d_l): \bar{P}_i\in L_0(d_i), i=1,\ldots,l\right\}
$$
is the complement of a $Z$-set in $\mathcal{L}_\otimes(d_1,\ldots,d_l).$

\begin{proposition}
\label{prop:zetaset}
For any $\varepsilon>0,$ there exists a continuous map $\tilde{\mathcal{X}}_\varepsilon:\mathcal{L}_\otimes(d_1,\ldots,d_l)\\\rightarrow\mathcal{L}_\otimes^{*}(d_1,\ldots,d_l)$ $\varepsilon$-close to the identity of $\mathcal{L}_\otimes(d_1,\ldots,d_l).$ In fact, $\mathcal{L}_\otimes(d_1,\ldots,d_l)\setminus\mathcal{L}_\otimes^{*}(d_1,\ldots,d_l)$ is a $Z$-set in $\mathcal{L}_\otimes(d_1,\ldots,d_l).$
\end{proposition}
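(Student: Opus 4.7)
The plan is to combine the meta-lemma (Lemma \ref{lem:metalema}) with the family of ``shrinking'' maps $\mathcal{X}_\rho:L(d)\to L_0(d)$ supplied by \cite[Lemma 5.6]{antonyananerviowest}. Recall that each $\mathcal{X}_\rho$ is a continuous map $\rho$-close to the identity of $L(d)$, so for every $\rho>0$ the $l$-tuple $(\mathcal{X}_\rho^1,\ldots,\mathcal{X}_\rho^l)$, with $\mathcal{X}_\rho^i:L(d_i)\to L_0(d_i)$, belongs to the family $\mathcal{F}_\rho$ of Lemma \ref{lem:metalema} when we take $\mathcal{A}_i=L(d_i)$ for each $i$. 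With this choice of the $\mathcal{A}_i$'s, Proposition \ref{prop:Ltensorpoyectivecontraible}(2) gives $\otimes_\pi(\mathcal{A}_1,\ldots,\mathcal{A}_l)=\Pi\cap\mathcal{L}_\otimes$, and since $\mathrm{conv}_{\otimes|\mathcal{L}_\otimes}$ maps $\mathcal{L}_\otimes$ onto $\Pi\cap\mathcal{L}_\otimes$, the associated set $\mathcal{A}$ in Lemma \ref{lem:metalema} is all of $\mathcal{L}_\otimes(d_1,\ldots,d_l)$.

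Given $\varepsilon>0$, let $\delta>0$ be produced by Lemma \ref{lem:metalema} for this $\varepsilon$. I then define
\[
\tilde{\mathcal{X}}_\varepsilon(P):=\mathrm{conv}\bigl(P\cup \mathcal{X}_\delta^1(\bar{P}_1)\otimes_\pi\cdots\otimes_\pi \mathcal{X}_\delta^l(\bar{P}_l)\bigr)\cap\mathcal{X}_\delta^1(\bar{P}_1)\otimes_\epsilon\cdots\otimes_\epsilon \mathcal{X}_\delta^l(\bar{P}_l),
\]
which is exactly the map $\tilde{f}_\varepsilon$ of formula (\ref{eq:liftingmap}) corresponding to the tuple $(\mathcal{X}_\delta^1,\ldots,\mathcal{X}_\delta^l)\in\mathcal{F}_\delta$. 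By Lemma \ref{lem:metalema}, $\tilde{\mathcal{X}}_\varepsilon$ is continuous and $\varepsilon$-close to the identity of $\mathcal{L}_\otimes(d_1,\ldots,d_l)$. Moreover, from the argument at the end of the proof of Lemma \ref{lem:metalema}, $\tilde{\mathcal{X}}_\varepsilon(P)$ belongs to $\mathcal{B}_{\mathcal{X}_\delta^1(\bar{P}_1),\ldots,\mathcal{X}_\delta^l(\bar{P}_l)}(d_1,\ldots,d_l)$, so by \cite[(4.1)]{topologytensorialbodies} its $conv_\otimes$-image decomposes as $\mathcal{X}_\delta^1(\bar{P}_1)\otimes_\pi\cdots\otimes_\pi \mathcal{X}_\delta^l(\bar{P}_l)$; equivalently, the distinguished $l$-tuple associated to $\tilde{\mathcal{X}}_\varepsilon(P)$ through Proposition \ref{prop:Ltensorpoyectivecontraible}(2) is $(\mathcal{X}_\delta^1(\bar{P}_1),\ldots,\mathcal{X}_\delta^l(\bar{P}_l))\in L_0(d_1)\times\cdots\times L_0(d_l)$. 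Hence $\tilde{\mathcal{X}}_\varepsilon(P)\in\mathcal{L}_\otimes^{*}(d_1,\ldots,d_l)$, as required.

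For the second assertion, I first observe that $\mathcal{L}_\otimes\setminus\mathcal{L}_\otimes^{*}$ is closed: the map sending $P\in\mathcal{L}_\otimes$ to its associated tuple $(\bar{P}_1,\ldots,\bar{P}_l)$ is continuous (it factors through $conv_\otimes$ and the homeomorphism $(\otimes_{\pi|L(d_1)\times\cdots\times L(d_l)})^{-1}$ of Proposition \ref{prop:Ltensorpoyectivecontraible}(2)), and $\mathcal{L}_\otimes\setminus\mathcal{L}_\otimes^{*}$ is the preimage of the closed set $\bigcup_{i=1}^l L(d_1)\times\cdots\times\{B_2^{d_i}\}\times\cdots\times L(d_l)$. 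Since $\mathcal{L}_\otimes(d_1,\ldots,d_l)$ is a compact AR (Theorem \ref{thm:ANRproperty}), the standard characterization of $Z$-sets in compact ANR's applies: a closed subset $A\subset X$ is a $Z$-set in $X$ iff for every $\varepsilon>0$ there is a continuous map $X\to X\setminus A$ which is $\varepsilon$-close to the identity of $X$ (see \cite[Chapter 1]{Chapmanbook}). The family $\{\tilde{\mathcal{X}}_\varepsilon\}_{\varepsilon>0}$ constructed above verifies this criterion for $A=\mathcal{L}_\otimes\setminus\mathcal{L}_\otimes^{*}$, so this set is a $Z$-set in $\mathcal{L}_\otimes(d_1,\ldots,d_l)$.

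The only delicate step is the verification that $\mathcal{X}_\delta^i(\bar{P}_i)$ is the \emph{distinguished} convex body associated to $\tilde{\mathcal{X}}_\varepsilon(P)$ through Remark \ref{rem:Q-ifixed}; this is what guarantees landing in $\mathcal{L}_\otimes^{*}$ and not merely in $\mathcal{L}_\otimes$. This is handled precisely as in Lemma \ref{lem:metalema} via the identity $conv_\otimes(\tilde{\mathcal{X}}_\varepsilon(P))=\mathcal{X}_\delta^1(\bar{P}_1)\otimes_\pi\cdots\otimes_\pi \mathcal{X}_\delta^l(\bar{P}_l)$ together with the uniqueness coming from Proposition \ref{prop:Ltensorpoyectivecontraible}(2).
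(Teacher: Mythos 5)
Your proof is correct and follows essentially the same route as the paper: apply Lemma~\ref{lem:metalema} with $\mathcal{A}_i=L(d_i)$ to the Antonyan--Nervio--West maps $\mathcal{X}_\rho^i:L(d_i)\to L_0(d_i)$, observe via \cite[(4.1)]{topologytensorialbodies} that $conv_\otimes(\tilde{\mathcal{X}}_\varepsilon(P))$ lies in $\otimes_\pi(L_0(d_1),\ldots,L_0(d_l))$ so that $\tilde{\mathcal{X}}_\varepsilon$ lands in $\mathcal{L}_\otimes^*$, and then invoke the identity-approximation characterization of $Z$-sets in compact ANRs. The only cosmetic difference is that you argue closedness of $\mathcal{L}_\otimes\setminus\mathcal{L}_\otimes^*$ directly as a preimage of a closed set, while the paper argues openness of $\mathcal{L}_\otimes^*$; and you make explicit the $Z$-set criterion that the paper leaves implicit.
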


\begin{proof}
To shorten notation we  simply write $\mathcal{L}_\otimes^{*}.$ 
First, notice that $\mathcal{L}_\otimes^{*}$  satisfies the following:
\begin{equation}
\label{eq:caractL-0}
\mathcal{L}_\otimes^{*}=(\text{conv}_{\otimes|\mathcal{L}_{\otimes}})^{-1}(\otimes_\pi\left(L_0(d_1),\ldots L_0(d_l))\right).
\end{equation}
This is straightforward from the definition of the $l$-tuple $\bar{P}_i,$ $i=1,\ldots,l,$ associated to each $P\in\mathcal{L}_\otimes$. Furthermore,  $\mathcal{L}_\otimes^{*}$ is open due to the fact that $\otimes_{\pi|L(d_1)\times\cdots\times L(d_l)}$ is a homeomorphism (Proposition \ref{prop:Ltensorpoyectivecontraible}), $\text{conv}_{\otimes|\mathcal{L}_{\otimes}}$ is continuous and each $L_0(d_i)$ is open in $L(d_i)$. As a consequence, its complement $\mathcal{L}_\otimes\setminus\mathcal{L}_\otimes^*$ is closed in $\mathcal{L}_\otimes.$

Now, if we consider the family of continuous maps $\rho$-close to the identity of $L(d_i)$ constructed in \cite[Lemma 5.6]{antonyananerviowest}, $\mathcal{X}^i_\rho:L(d_i)\rightarrow L_0(d_i),$ $\rho>0$ and $i=1,\ldots,l$. Then, from Lemma \ref{lem:metalema}, we know that, for every $\varepsilon>0,$ there exists $\delta>0$ such that the map $\tilde{\mathcal{X}}_\varepsilon:\mathcal{L}_\otimes\rightarrow\mathcal{L}_\otimes,$ defined as
$$
\tilde{\mathcal{X}}_\varepsilon(P)=\text{conv}(P\cup \mathcal{X}_\delta^1(\bar{P}_1)\otimes_\pi\cdots\otimes_\pi \mathcal{X}_\delta^l(\bar{P}_l))\cap \mathcal{X}_\delta^1(\bar{P}_1)\otimes_\epsilon\cdots\otimes_\epsilon \mathcal{X}_\delta^l(\bar{P}_l),
$$
is continuous and $\varepsilon$-close to the identity of $\mathcal{L}_\otimes.$ We have used that
\begin{equation*}
\mathcal{L}_\otimes=(\text{conv}_{\otimes|\mathcal{L}_{\otimes}})^{-1}(\otimes_\pi\left(L(d_1),\ldots L(d_l))\right).
\end{equation*}
The proof is completed by showing that $\tilde{\mathcal{X}}_\varepsilon(P)\in\mathcal{L}_\otimes^*,$ for any $P\in\mathcal{L}_\otimes.$ This follows from the fact that $\tilde{\mathcal{X}}_\varepsilon(P)$ is a tensorial body with respect to $\mathcal{X}_\delta^1(\bar{P}_1),\ldots,\mathcal{X}_\delta^l(\bar{P}_l),$ and so $conv_\otimes(\tilde{\mathcal{X}}_\varepsilon(P))=\mathcal{X}_\delta^1(\bar{P}_1)\otimes_\pi\cdots\otimes_\pi \mathcal{X}_\delta^l(\bar{P}_l)$ (see \cite[(4.1)]{topologytensorialbodies}). 
\end{proof}
In the proof of the theorem below, by $\mathcal{P}(d),$ $d\geq2,$ we denote the subset of $L_0(d)$ consisting of the $P\in L_0(d)$ such that the contact set $\partial P\cap\partial B_2^d$ has empty interior relative to $\partial B_2^d.$ $\mathcal{P}(d)$ was introduced in \cite{antonyananerviowest}.
\begin{theorem}
\label{thm:maintheorem}
Let $d_i\geq2,$ $i=1,\dots,l$. Then $\mathcal{L}_\otimes(d_1,\dots,d_l)$ is homeomorphic to the Hilbert cube.
\end{theorem}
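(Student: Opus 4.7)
The plan is to establish that $\mathcal{L}_\otimes(d_1,\dots,d_l)$ is a compact, contractible Hilbert cube manifold and then invoke the classical theorem that the only such space (up to homeomorphism) is $\mathcal{Q}$ itself \cite[Theorem 7.5.8]{vanMillbook}. Three of the four ingredients are already in hand: compactness comes from Theorem \ref{thm:resumenthm}(4); the AR (in particular, ANR) property from Theorem \ref{thm:ANRproperty}; and contractibility from Proposition \ref{prop:Ltensorpoyectivecontraible}, which provides an $O_\otimes$-strong deformation retraction of $\mathcal{L}_\otimes$ onto $\Pi\cap\mathcal{L}_\otimes$ followed by a strict contraction of the latter to $B_2^{d_1}\otimes_\pi\cdots\otimes_\pi B_2^{d_l}$. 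Thus the core of the theorem is to upgrade $\mathcal{L}_\otimes$ to a $\mathcal{Q}$-manifold.

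Following the roadmap outlined in the introduction, I would first verify that the open subset $\mathcal{L}_\otimes^*(d_1,\dots,d_l)$ is a $\mathcal{Q}$-manifold. Being open in an AR, it is an ANR. Its internal structure is governed by the restriction $conv_\otimes|_{\mathcal{L}_\otimes^*}$, whose image is homeomorphic via $\otimes_\pi$ to $L_0(d_1)\times\cdots\times L_0(d_l)$: a product of $\mathcal{Q}$-manifolds, since each $L_0(d_i)=L(d_i)\setminus\{B_2^{d_i}\}$ is the complement of a $Z$-set in $L(d_i)\cong\mathcal{Q}$ by \cite{antonyananerviowest,banacamazurcompactum}. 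Each fiber $\mathcal{B}_{\bar P_1,\dots,\bar P_l}$ is homeomorphic to $\mathcal{Q}$ by Proposition \ref{prop:globosHilbertcube}. To actually produce a $\mathcal{Q}$-manifold structure, I would verify Torunczyk's disjoint disks characterization \cite[Theorem 1]{Torunczyk1980} on $\mathcal{L}_\otimes^*$: given any two maps $\mathcal{Q}\to\mathcal{L}_\otimes^*$ and $\varepsilon>0$, I would construct $\varepsilon$-close approximations with disjoint images by (i) pushing the base components into generic tuples in $\mathcal{P}(d_1)\times\cdots\times\mathcal{P}(d_l)$ using the Antonyan--Nervio--West maps $\mathcal{X}_\rho^i$ assembled through the lifting procedure of Lemma \ref{lem:metalema}, and (ii) exploiting the convex-fiber flexibility inherent in the construction of Lemma \ref{lem:infinitedimension} to disentangle images within each individual fiber.

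With $\mathcal{L}_\otimes^*$ established as a $\mathcal{Q}$-manifold, the remainder of the argument is routine. Proposition \ref{prop:zetaset} exhibits $\mathcal{L}_\otimes\setminus\mathcal{L}_\otimes^*$ as a $Z$-set in the compact ANR $\mathcal{L}_\otimes$, so Edwards' theorem in the form recorded in \cite[\textsection 3]{Torunczyk1980} (if $X$ is a locally compact ANR and $X\setminus Z$ is a $\mathcal{Q}$-manifold for some $Z$-set $Z\subset X$, then $X$ is a $\mathcal{Q}$-manifold) upgrades $\mathcal{L}_\otimes$ to a $\mathcal{Q}$-manifold. Combined with compactness and contractibility, \cite[Theorem 7.5.8]{vanMillbook} then forces $\mathcal{L}_\otimes\cong\mathcal{Q}$.

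The hard part is the disjoint disks verification for $\mathcal{L}_\otimes^*$. The fibration $conv_\otimes|_{\mathcal{L}_\otimes^*}$ is not a priori locally trivial, so one cannot simply appeal to a product decomposition of the form $(\mathcal{Q}\text{-manifold})\times\mathcal{Q}$. Controlling simultaneously the $l$ independent base perturbations on the factors $L_0(d_i)$ and the fiber perturbations within the injective-to-projective envelope demands the precise $\delta$-to-$\varepsilon$ coupling delivered by Lemma \ref{lem:metalema}, and the density-cum-genericity of $\mathcal{P}(d_i)\subset L_0(d_i)$ imported from \cite{antonyananerviowest} is what actually produces genuinely disjoint approximants, which is the step I expect to require the most care.
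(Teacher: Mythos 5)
Your high-level strategy coincides exactly with the paper's: establish that $\mathcal{L}_\otimes$ is a compact contractible AR, show the open subset $\mathcal{L}_\otimes^*$ is a $\mathcal{Q}$-manifold, invoke Proposition \ref{prop:zetaset} and Edwards' theorem to upgrade $\mathcal{L}_\otimes$ to a $\mathcal{Q}$-manifold, and finish with \cite[Theorem 7.5.8]{vanMillbook}. However, your description of the crucial step --- verifying Toru\'nczyk's disjoint-cells condition on $\mathcal{L}_\otimes^*$ --- has a genuine gap, and you correctly flag it as the part requiring the most care.

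First, a factual slip: you propose to use the maps $\mathcal{X}_\rho^i$ to push base components into $\mathcal{P}(d_1)\times\cdots\times\mathcal{P}(d_l)$, but $\mathcal{X}_\rho^i$ (from \cite[Lemma 5.6]{antonyananerviowest}, and used in Proposition \ref{prop:zetaset}) maps $L(d_i)\to L_0(d_i)$; it pushes off the single point $B_2^{d_i}$ and has nothing to do with $\mathcal{P}(d_i)$. The paper instead uses the \emph{two} families $f_\rho^i:L_0(d_i)\to\mathcal{P}(d_i)$ and $h_\rho^i:L_0(d_i)\to L_0(d_i)\setminus\mathcal{P}(d_i)$ from \cite[Lemma 5.4]{antonyananerviowest}.

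The more substantive problem is your step (ii), the ``fiber disentanglement'' via Lemma \ref{lem:infinitedimension}. That step is not in the paper, is not developed in your sketch, and --- as you yourself anticipate --- would be genuinely hard: the fibration $conv_\otimes|_{\mathcal{L}_\otimes^*}$ is not locally trivial, so controlling simultaneous fiber perturbations coherently over the base is nontrivial. The paper avoids this entirely by a cleaner idea. Feeding the pair $(f_\delta^1,\dots,f_\delta^l)$ into Lemma \ref{lem:metalema} produces a self-map $\tilde f_\varepsilon$ of $\mathcal{L}_\otimes^*$ with the property that $\tilde f_\varepsilon(P)\in\mathcal{B}_{f_\delta^1(\bar P_1),\dots,f_\delta^l(\bar P_l)}$, so $conv_\otimes(\tilde f_\varepsilon(P))\in\otimes_\pi(\mathcal{P}(d_1),\dots,\mathcal{P}(d_l))$; similarly $conv_\otimes(\tilde h_\varepsilon(P))\in\otimes_\pi(L_0(d_1)\setminus\mathcal{P}(d_1),\dots,L_0(d_l)\setminus\mathcal{P}(d_l))$. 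Because $\otimes_\pi$ is injective on $L(d_1)\times\cdots\times L(d_l)$ (Proposition \ref{prop:Ltensorpoyectivecontraible}), these two $conv_\otimes$-images are disjoint, and hence so are $\text{Im}(\tilde f_\varepsilon)$ and $\text{Im}(\tilde h_\varepsilon)$ --- no fiberwise disentanglement is ever needed. Moreover, the sufficient form of Toru\'nczyk's condition that the paper invokes is exactly this: the existence, for every $\varepsilon>0$, of two self-maps of $\mathcal{L}_\otimes^*$ that are $\varepsilon$-close to the identity and have disjoint images, which then handles arbitrary maps from $\mathcal{Q}$ by post-composition. Your plan as written --- push both approximants into the same set $\mathcal{P}(d_1)\times\cdots\times\mathcal{P}(d_l)$ and then separate them in the fibers --- bypasses the dichotomy $\mathcal{P}(d_i)$ versus $L_0(d_i)\setminus\mathcal{P}(d_i)$ that is precisely what makes the disjointness automatic, and you offer no argument for how the fiber-level separation would actually be carried out.
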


\begin{proof}
Recall that $\mathcal{L}_\otimes(d_1,\dots,d_l)$ is a compact contractible AR (\cite[Proposition 4.6]{topologytensorialbodies} and Theorem \ref{thm:ANRproperty}), then, by the characterization of $\mathcal{Q}$ as the only $\mathcal{Q}$-manifold which is both compact and contractible  \cite[Theorem 7.5.8]{vanMillbook}, we only need to show that $\mathcal{L}_\otimes(d_1,\dots,d_l)$ is a $\mathcal{Q}$-manifold. For this purpose,  we will prove that $\mathcal{L}_\otimes^{*}(d_1,\ldots,d_l)$ is an open $\mathcal{Q}$-manifold in $\mathcal{L}_\otimes(d_1,\dots,d_l),$ whose complement is a $Z$-set (see Proposition \ref{prop:zetaset}). Then, by  \cite[\textsection 3]{Torunczyk1980}, we will have that $\mathcal{L}_\otimes(d_1,\dots,d_l)$ is a $\mathcal{Q}$-manifold.

We begin by proving that $\mathcal{L}_\otimes^{*}$ is a locally compact ANR. To this end, notice that in the proof of Proposition \ref{prop:zetaset}, we  showed that $\mathcal{L}_\otimes^{*}$ is open in $\mathcal{L}_\otimes.$ Hence, given that $\mathcal{L}_\otimes$ is a compact AR (Theorem \ref{thm:ANRproperty}), $\mathcal{L}_\otimes^{*}$ must be a locally compact ANR.

To prove that $\mathcal{L}_\otimes^{*}$ is a Hilbert cube manifold, we use Toru\'{n}czyk's characterization \cite[Theorem 1]{Torunczyk1980}. In this sense, it is enough to show that for each $\varepsilon>0,$ there are two continuous maps $\tilde{f}_\varepsilon,\tilde{h}_\varepsilon:\mathcal{L}_\otimes^{*}\rightarrow\mathcal{L}_\otimes^{*}$ $\varepsilon$-close  to the identity of $\mathcal{L}_\otimes^{*}$ such that $\text{Im}(\tilde{f}_\varepsilon)\cap\text{Im}(\tilde{h}_\varepsilon)=\emptyset.$
Let $f_\rho^i:L_0(d_i)\rightarrow\mathcal{P}(d_i)$ and $h_\rho^i:L_0(d_i)\rightarrow L_0(d_i)\setminus\mathcal{P}(d_i),$ $\rho>0$, be the families of continuous maps, defined in \cite[Lemma 5.4]{antonyananerviowest}, $\rho$-close to the identity of $L_0(d_i),$ $i=1,\dots,l.$
Observe that, as a consequence of (\ref{eq:caractL-0}), if we let $\mathcal{A}_i=L_0(d_i)$ in Lemma \ref{lem:metalema}, then for every $\varepsilon>0,$ there exists $\delta>0$ such that the maps $\tilde{f}_\varepsilon,\tilde{h}_\varepsilon:\mathcal{L}_\otimes^{*}\rightarrow\mathcal{L}_\otimes^{*}$ defined as
$$
\tilde{f}_\varepsilon(P)=\text{conv}(P\cup f_\delta^1(\bar{P}_1)\otimes_\pi\cdots\otimes_\pi f_\delta^l(\bar{P}_l))\cap f_\delta^1(\bar{P}_1)\otimes_\epsilon\cdots\otimes_\epsilon f_\delta^l(\bar{P}_l),
$$ 
and
$$
\tilde{h}_\varepsilon(P)=\text{conv}(P\cup h_\delta^1(\bar{P}_1)\otimes_\pi\cdots\otimes_\pi h_\delta^l(\bar{P}_l))\cap h_\delta^1(\bar{P}_1)\otimes_\epsilon\cdots\otimes_\epsilon h_\delta^l(\bar{P}_l)
$$
are continuous and $\varepsilon$-close to the identity of $\mathcal{L}_\otimes^{*}.$ Moreover, by \cite[(4.1)]{topologytensorialbodies}, due to $\tilde{f}_\varepsilon(P)$  and $\tilde{h}_\varepsilon(P)$ are tensorial bodies with respect to $f_\delta^1(\bar{P}_1),\dots, f_\delta^l(\bar{P}_l)$ and $h_\delta^1(\bar{P}_1),\dots, h_\delta^l(\bar{P}_l),$ respectively, we have that 
$$
conv_\otimes(\tilde{f}_\varepsilon(P))=f_\delta^1(\bar{P}_1)\otimes_\pi\cdots\otimes_\pi f_\delta^l(\bar{P}_l)\in\otimes_\pi(\text{Im}(f_\delta^1)\times\cdots\times\text{Im}(f_\delta^l)),
$$
and 
$conv_\otimes(\tilde{h}_\varepsilon(P))=h_\delta^1(\bar{P}_1)\otimes_\pi\cdots\otimes_\pi h_\delta^l(\bar{P}_l)\in\otimes_\pi(\text{Im}(h_\delta^1)\times\cdots\times\text{Im}(h_\delta^l)),
$
for any $P\in\mathcal{L}_\otimes^{*}.$ Hence, since $\otimes_{\pi|L(d_1)\times\cdots\times L(d_l)}$ is a homeomorphism (Proposition \ref{prop:Ltensorpoyectivecontraible}),  $\text{Im}(f_\delta^i)\in\mathcal{P}(d_i)$ and  $\text{Im}(h_\delta^i)\in L_0(d_i)\setminus\mathcal{P}(d_i),$ $i=1,\dots,l,$ then the images $\tilde{f}_\varepsilon$ and $\tilde{h}_\varepsilon$ must be disjoint.
 \end{proof}

\begin{corollary}
\label{cor:mainresultcor}
Let $d_i\geq2,$ $i=1,\ldots,l$ and  $p=\frac{d_1(d_1+1)+\cdots+d_l(d_l+1)}{2}.$ Then, the space of tensorial bodies  $\mathcal{B}_\otimes(d_1,\ldots,d_l)$ is homeomorphic to $\mathcal{Q}\times\mathbb{R}^p.$ Particularly, $\mathcal{B}_\otimes(d_1,\ldots,d_l)$ and $\Pi_\otimes(d_1,\ldots,d_l)$ are homeomorphic.
\end{corollary}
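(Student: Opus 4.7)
The plan is to combine Theorem \ref{thm:maintheorem} with the structural decomposition of $\mathcal{B}_\otimes(d_1,\ldots,d_l)$ already recorded in the introduction as (\ref{eq:introLtensor}). First, I would invoke Theorem \ref{thm:resumenthm}(4), which gives that $\mathcal{L}_\otimes(d_1,\ldots,d_l)$ is a compact $O_\otimes$-global slice for the proper $GL_\otimes$-space $\mathcal{B}_\otimes(d_1,\ldots,d_l)$; combined with the fact that $\mathcal{E}_\otimes(d_1,\ldots,d_l)$ is homeomorphic to $\mathbb{R}^p$ (Theorem \ref{thm:resumenthm}(3)), this yields the product decomposition
\[
\mathcal{B}_\otimes(d_1,\ldots,d_l)\;\cong\;\mathcal{L}_\otimes(d_1,\ldots,d_l)\times\mathbb{R}^p,
\]
which is exactly (\ref{eq:introLtensor}) and was established in \cite{topologytensorialbodies}.

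Next, I would plug in Theorem \ref{thm:maintheorem}, which asserts $\mathcal{L}_\otimes(d_1,\ldots,d_l)\cong\mathcal{Q}$, to conclude
\[
\mathcal{B}_\otimes(d_1,\ldots,d_l)\;\cong\;\mathcal{Q}\times\mathbb{R}^p.
\]
This settles the first (main) assertion; it is a purely formal step once Theorem \ref{thm:maintheorem} is in hand, so no genuine obstacle arises here.

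For the second assertion, I would simply compare the above with Theorem \ref{thm:structurePi}(3), which gives $\Pi(d_1,\ldots,d_l)\cong\mathcal{Q}\times\mathbb{R}^p$ with the same exponent $p$. Transitivity of the homeomorphism relation then forces $\mathcal{B}_\otimes(d_1,\ldots,d_l)\cong\Pi(d_1,\ldots,d_l)$, even though the inclusion $\Pi(d_1,\ldots,d_l)\subseteq\mathcal{B}_\otimes(d_1,\ldots,d_l)$ is strict. Since every ingredient is a previously stated theorem, the only care needed is bookkeeping of the exponent $p$ and noting that the retraction $conv_\otimes$ used to identify $\Pi$ as a deformation retract (Proposition \ref{prop:convisproper}) already matches the product structure; there is no substantive obstacle in this corollary beyond citing the correct results in the right order.
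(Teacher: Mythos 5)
Your argument is correct and follows essentially the same route as the paper: both rely on the identification $\mathcal{B}_\otimes(d_1,\ldots,d_l)\cong\mathcal{L}_\otimes(d_1,\ldots,d_l)\times\mathbb{R}^p$ from \cite{topologytensorialbodies} (the paper cites Corollary 4.11 there directly, while you reconstruct it from Theorem \ref{thm:resumenthm}(3)--(4)), combine it with Theorem \ref{thm:maintheorem}, and then deduce the final clause from Theorem \ref{thm:structurePi}(3). No gaps.
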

\begin{proof}
It is a direct consequence of Theorem \ref{thm:maintheorem} and \cite[Corollary 4.11]{topologytensorialbodies}. In the latter, it is proved that $\mathcal{B}_\otimes(d_1,\ldots,d_l)$ is homeomorphic to $\mathcal{L}_\otimes(d_1,\ldots,d_l)\times\mathbb{R}^p$, with $p$ as in the statement of the corollary. The last part of the statement follows directly from Theorem \ref{thm:structurePi}-3.
\end{proof}

\section{Final remarks}
\label{sec:discussion}

Let us finish the paper by exploring the relation between the so called Banach-Mazur compactum  and the orbit space $\mathcal{B}_\otimes(d_1,\dots,d_l)/GL_\otimes(d_1,\dots,d_l)$. To this end, we first introduce some notation. The Banach-Mazur compactum is denoted by $\mathcal{BM}(d).$ It is the set of equivalence classes of $0$-symmetric convex bodies in $\mathbb{R}^{d}$ determined by the relation: $P\sim R$ if and only if $\delta^{BM}(P,R)=1.$ Here, $\delta^{BM}(\cdot,\cdot)$ is the so called Banach-Mazur distance \cite[p. 309]{Gruber1993a}.
It is well-known that $\mathcal{BM}(d)$ is a compact metric space with the metric induced by the logarithm $\text{log }\delta_{BM}.$ In terms of topological groups, another well-known fact is that the Banach-Mazur compactum $(\mathcal{BM}(d),\text{log }\delta^{BM})$ is homeomorphic to the orbit space $\mathcal{B}(d)/GL(d)$ (see \textit{e.g.} \cite[p. 1191]{LindenstraussMilman1993}) or \cite[Section 6]{Schaffer1967} and the references therein). 
%\cite[p. 210]{banacamazurcompactum}
In a similar way, for the class of tensorial bodies a Banach-Mazur type compactum can be defined. In this case, the compactum of tensorial bodies $\mathcal{BM}_\otimes(d_1,\dots,d_l)$ is defined as the set of equivalence classes of tensorial bodies in $\otimes_{i=1}^{l}\mathbb{R}^{d_i}$ determined by the relation: $P\sim' R$ if and only if  $\delta_{\otimes}^{BM}(P,R)=1.$ Here, $\delta_{\otimes}^{BM}(\cdot,\cdot)$ denotes the tensorial Banach-Mazur distance (\ref{eq:tensorial bm distance}).
In \cite{topologytensorialbodies}, it is proved that $\text{log }\delta_\otimes^{BM}$ is a metric on $\mathcal{BM}_\otimes(d_1,\dots,d_l)$ which turns it into a compact metric space. Indeed, a fundamental property is that $(\mathcal{BM}_\otimes(d_1,\dots,d_l),\delta_{\otimes}^{BM})$ is homeomorphic to the orbit space $\mathcal{B}_\otimes(d_1,\dots,d_l)/GL_\otimes(d_1,\dots,d_l)$ (see \cite[Corollary 4.9]{topologytensorialbodies}). For this reason, we also refer to this orbit space as the compactum of tensorial bodies.

In \cite{banacamazurcompactum,antonyananerviowest,SergeyCorrigendum} it is proved that $\mathcal{B}(d)$ is a $GL(d)$-proper space for which $L(d)$ is a compact $O(d)$-global slice.  This yields to new topological models for $\mathcal{BM}(d),$ which turns out to be homeomorphic to the orbit space $L(d)/O(d).$ Since $L(d)$ and $\mathcal{Q}$ are homeomorphic, a representation of $\mathcal{BM}(d)$ as an orbit space $\mathcal{Q}/O(d)$ is obtained. In the same spirit, in the context of tensorial bodies, we know that $\mathcal{B}_\otimes(d_1,\dots,d_l)$ is a $GL_\otimes$-proper space for which $\mathcal{L}_\otimes(d_1,\dots,d_l)$ is a compact $O_\otimes$-global slice, and the corresponding orbit spaces,  $\mathcal{L}_\otimes/O_\otimes$ and $\mathcal{B}_\otimes/GL_\otimes,$ are homeomorphic \cite{topologytensorialbodies}. Moreover, by \cite[Corollary 4.9]{topologytensorialbodies} and Theorem \ref{thm:maintheorem}, the compactum $\mathcal{BM}_\otimes(d_1,\dots,d_l)$ can be represented as an orbit space $\mathcal{Q}/O_\otimes(d_1,\dots,d_l).$ 

In the following proposition we show that, as occurs with the Banach-Mazur compactum, the compactum of tensorial bodies is contractible.
\begin{proposition}
\label{prop_BMtensorcontractible}
Let $d_i\geq2,$ $i=1,\dots,l,$ $\mathcal{BM}_\otimes(d_1,\dots,d_l)$ is contractible.
\end{proposition}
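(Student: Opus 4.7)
The plan is to transfer the contraction problem from $\mathcal{BM}_\otimes(d_1,\dots,d_l)$ to the more tractable orbit space $\mathcal{L}_\otimes(d_1,\dots,d_l)/O_\otimes(d_1,\dots,d_l)$, and then build an equivariant contraction of $\mathcal{L}_\otimes(d_1,\dots,d_l)$ that descends to the quotient. By Theorem \ref{thm:resumenthm}(4), the compactum $\mathcal{BM}_\otimes(d_1,\dots,d_l)$ is homeomorphic to $\mathcal{L}_\otimes(d_1,\dots,d_l)/O_\otimes(d_1,\dots,d_l)$, so it suffices to exhibit a contraction of the latter.

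The key observation is that both ingredients needed have already been produced. By Proposition \ref{prop:Ltensorpoyectivecontraible}(1), there is an $O_\otimes$-strong deformation retraction $H:\mathcal{L}_\otimes(d_1,\dots,d_l)\times[0,1]\to\mathcal{L}_\otimes(d_1,\dots,d_l)$ with image $\Pi\cap\mathcal{L}_\otimes$. By Proposition \ref{prop:Ltensorpoyectivecontraible}(3), there is an $O_\otimes$-strict homotopy $F:(\Pi\cap\mathcal{L}_\otimes)\times[0,1]\to\Pi\cap\mathcal{L}_\otimes$ contracting $\Pi\cap\mathcal{L}_\otimes$ onto the $O_\otimes$-fixed point $B_0:=B_2^{d_1}\otimes_\pi\cdots\otimes_\pi B_2^{d_l}$. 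Concatenating them in the usual way,
\[
G(P,t)=\begin{cases} H(P,2t), & 0\leq t\leq \tfrac12,\\ F(H(P,1),2t-1), & \tfrac12\leq t\leq 1,\end{cases}
\]
yields a continuous $O_\otimes$-equivariant homotopy $G:\mathcal{L}_\otimes(d_1,\dots,d_l)\times[0,1]\to\mathcal{L}_\otimes(d_1,\dots,d_l)$ with $G_0=\mathrm{id}$ and $G_1\equiv B_0$.

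Let $\pi_O:\mathcal{L}_\otimes(d_1,\dots,d_l)\to\mathcal{L}_\otimes(d_1,\dots,d_l)/O_\otimes(d_1,\dots,d_l)$ be the orbit map. Because $G_t$ is $O_\otimes$-equivariant for every $t$, the composition $\pi_O\circ G$ is constant on $O_\otimes$-orbits in the first factor, and thus it factors through a continuous map
\[
\widetilde{G}:\bigl(\mathcal{L}_\otimes(d_1,\dots,d_l)/O_\otimes(d_1,\dots,d_l)\bigr)\times[0,1]\to\mathcal{L}_\otimes(d_1,\dots,d_l)/O_\otimes(d_1,\dots,d_l),
\]
where we use that $\pi_O\times\mathrm{id}_{[0,1]}$ is a quotient map (here the compactness of $[0,1]$ and of $\mathcal{L}_\otimes(d_1,\dots,d_l)$ ensures this; alternatively, $\pi_O$ is an open map because the group acts properly). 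Then $\widetilde{G}_0$ is the identity of the orbit space, while $\widetilde{G}_1$ is the constant map to $\pi_O(B_0)$, since $B_0$ is $O_\otimes$-fixed and therefore its orbit is a single point. This exhibits $\mathcal{L}_\otimes(d_1,\dots,d_l)/O_\otimes(d_1,\dots,d_l)$ as contractible, and the homeomorphism with $\mathcal{BM}_\otimes(d_1,\dots,d_l)$ finishes the proof.

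The only delicate point is ensuring that $\widetilde{G}$ is genuinely continuous, i.e. that $G$ descends through the product of quotient maps. This is the step I would expect to need the most care, but it is routine: the compactness of $\mathcal{L}_\otimes(d_1,\dots,d_l)$ together with the properness of the $O_\otimes$-action (Theorem \ref{thm:resumenthm}(2)) makes $\pi_O$ an open, perfect quotient map, so $\pi_O\times\mathrm{id}_{[0,1]}$ is a quotient map and the factorization of the continuous equivariant $G$ through it is continuous.
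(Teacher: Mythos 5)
Your proposal is correct and follows essentially the same route as the paper: both transfer the problem to $\mathcal{L}_\otimes(d_1,\dots,d_l)/O_\otimes(d_1,\dots,d_l)$ via Corollary 4.9, concatenate the two $O_\otimes$-homotopies from Proposition \ref{prop:Ltensorpoyectivecontraible} (parts 1 and 3) into the same map $G$, and then pass to the orbit space. You are merely more explicit about the (routine) descent of the equivariant homotopy through the quotient, which the paper leaves implicit.
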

\begin{proof}
We will define an $O_\otimes$-homotopy from $\mathcal{L}_\otimes(d_1,\dots,d_l)$ to the $O_\otimes$-fixed point $B_2^{d_1}\otimes_\pi\cdots\otimes_\pi B_2^{d_l},$ as a consequence the orbit space $\mathcal{L}_\otimes/O_\otimes$ will be  contractible to the point $\{B_2^{d_1}\otimes_\pi\cdots\otimes_\pi B_2^{d_l}\}.$  The result then will follow from the fact that $\mathcal{BM}_\otimes$ is homeomorphic $\mathcal{L}_\otimes/O_\otimes$ (\cite[Corollary 4.9]{topologytensorialbodies}). 
Let $H:\mathcal{L}_{\otimes}\times[0,1]\rightarrow\mathcal{L}_{\otimes}$ be the $GL_\otimes$-homotopy used in the proof of Proposition \ref{prop:Ltensorpoyectivecontraible}-1, and let $F:\Pi\cap\mathcal{L}_{\otimes}\times[0,1]\rightarrow\Pi\cap\mathcal{L}_{\otimes}$  be the $O_\otimes$-homotopy used in the proof of  Proposition \ref{prop:Ltensorpoyectivecontraible}-3. Then, the map $G:\mathcal{L}_\otimes\times[0,1]\rightarrow\mathcal{L}_\otimes$ defined as
 \begin{equation*}
 G(P,t)=\begin{cases}H(P,2t); \text{ if }t\in\left[0,\frac{1}{2}\right],\\
 F(conv_\otimes(P),2t-1);\text{ if }t\in\left[\frac{1}{2},1\right].
 \end{cases}
 \end{equation*}
is the desired homotopy. Indeed, since $H(P,1)=conv_\otimes(P)=F(conv_\otimes(P),0),$ $G$ is well-defined. Its continuity and $O_\otimes$-equivariance follow directly from the properties of $H$ and $F.$   Clearly, $G(P,0)=P$ and $G(P,1)=B_2^{d_1}\otimes_\pi\cdots\otimes_\pi B_2^{d_l},$ for every 
$Q\in\mathcal{L}_{\otimes}.$
 \end{proof}

In \cite[Theorem 3.3]{SergeyBanachMazurmodel}, some conditions on the action of the orthogonal group $O(d)$ on a Hilbert cube $X$ guaranteeing that the orbit space $X/O(d)$ is homeomorphic to $\mathcal{BM}(d)$ are established. There an essential hypothesis is that the action must have a unique $O(d)$-fixed point.  In the case of the compactum of tensorial bodies, represented as $\mathcal{L}_\otimes(d_1,\dots,d_l)/O_\otimes(d_1,\dots,d_l),$ the group $O_\otimes(d_1,\dots,d_l)$ is a maximal compact subgroup of  $GL_\otimes(d_1,\dots,d_l)$ consisting of orthogonal maps \cite{topologytensorialbodies}, for which the $O_\otimes$-space $\mathcal{L}_\otimes(d_1,\dots,d_l)$ has at least three $O_\otimes$-fixed points. 

\begin{remark}
\label{rem:notunique}
The Euclidean ball $B_2^{d_1,\dots,d_l},$ and the tensor products $B_2^{d_1}\otimes_\pi\cdots\otimes_\pi B_2^{d_l}$ and $B_2^{d_1}\otimes_\epsilon\cdots\otimes_\epsilon B_2^{d_l}$ are $O_\otimes$-fixed points in $\mathcal{L}_\otimes(d_1,\dots,d_l).$ 
\end{remark}
\begin{proof}
First notice that  $B_2^{d_1,\dots,d_l}, B_2^{d_1}\otimes_\pi\cdots\otimes_\pi B_2^{d_l}$ and $B_2^{d_1}\otimes_\epsilon\cdots\otimes_\epsilon B_2^{d_l}$ are tensorial bodies w.r.t. $B_2^{d_1},\dots,B_2^{d_l},$ then, by \cite[Proposition 4.4 (2)]{topologytensorialbodies},
$$
\ell_\otimes(B_2^{d_1,\dots,d_l})=\ell_\otimes(B_2^{d_1}\otimes_\epsilon\cdots\otimes_\epsilon B_2^{d_l})
=\ell_\otimes(B_2^{d_1}\otimes_\pi\cdots\otimes_\pi B_2^{d_l})=B_2^{d_1,\dots,d_l}.
$$
Hence, all of them belong to $\mathcal{L}_\otimes(d_1,\dots,d_l).$
Since $U\in O_\otimes(d_1,\dots,d_l)$ is an orthogonal map  $U(B_2^{d_1,\dots,d_l})=B_2^{d_1,\dots,d_l},$ and thus $B_2^{d_1,\dots,d_l}$ is an $O_\otimes$-fixed point.
 On the other hand, since any $U\in O_\otimes(d_1,\dots,d_l)$ can be written as $U=(U_{1}\otimes\cdots\otimes U_{l})U_\sigma,$ for some $U_i\in O(d_i),$ $i=1,\dots,l,$ and $U_\sigma\in\mathcal{P}$ \cite[Proposition A.4]{topologytensorialbodies}, then
\begin{align*}
U(B_2^{d_1}\otimes_\alpha\cdots\otimes_\alpha B_2^{d_l})=U(B_2^{d_1}\otimes_{\alpha}\cdots\otimes_{\alpha}B_2^{d_l})&= U_{1}(B_2^{d_{\sigma(1)}})\otimes_{\alpha}\cdots\otimes_{\alpha}U_{l}(B_2^{d_{\sigma(l)}})\\&= B_2^{d_{\sigma(1)}}\otimes_{\alpha}\cdots\otimes_{\alpha}B_2^{d_{\sigma(l)}}
\end{align*}
for $\alpha=\pi,\epsilon,$ see equality (\ref{eq:gltensor image of proj and inj tp}). Now, given that the permutation on the dimensions $d_{i}$'s is only possible if some of them  are the same, we must have that $B_2^{d_{\sigma(1)}}\otimes_{\alpha}\cdots\otimes_{\alpha}B_2^{d_{\sigma(l)}}=B_2^{d_1}\otimes_\alpha\cdots\otimes_\alpha B_2^{d_l}$ and $U(B_2^{d_1}\otimes_\alpha\cdots\otimes_\alpha B_2^{d_l})=B_2^{d_1}\otimes_\alpha\cdots\otimes_\alpha B_2^{d_l}$ for $\alpha=\pi,\epsilon$ and any $U\in O_\otimes,$ as desired. 
\end{proof}

Taking into account that the $GL_\otimes$-spaces  $\mathcal{B}_\otimes(d_1,\dots,d_l)$ and $\Pi(d_1,\dots,d_l)$ are homeomorphic $\mathcal{Q}$-manifolds (Corollary \ref{cor:mainresultcor}),  the following natural question arises:

\

\noindent\textbf{Question 1} Are $\mathcal{B}_\otimes(d_1,\dots,d_l)/GL_\otimes(d_1,\dots,d_l)$ and $\Pi(d_1,\dots,d_l)/GL_\otimes(d_1,\dots,d_l)$ homeomorphic spaces? In particular: Is the compactum $\mathcal{BM}_\otimes(d_1,\dots,d_l)$ homeomorphic to $\Pi(d_1,\dots,d_l)/GL_\otimes(d_1,\dots,d_l)$?

\

Bringing together Proposition \ref{prop:convisproper} and Corollary \ref{cor:mainresultcor}, we have that the  $GL_\otimes$-map $conv_\otimes:\mathcal{B}_\otimes(d_1,\dots,d_l)\rightarrow\Pi(d_1,\dots,d_l)$ is indeed a cell-like map between $\mathcal{Q}$-manifolds. In consequence, it is also a near-homeomorphism \cite[Corollary 43.2]{Chapmanbook}. This motivates the following question:

\

\noindent\textbf{Question 2} Does there exist a $GL_\otimes$-homeomorphism between  $\mathcal{B}_\otimes(d_1,\dots,d_l)$ and $\Pi(d_1,\dots,d_l)$?

\

Clearly, a positive answer to the last question will provide a positive answer to the first one. We finish the paper by exhibiting the relation between the Banach-Mazur compactum and the orbit space $\Pi(d_1,\dots,d_l)/GL_\otimes(d_1,\dots,d_l)$ when the dimensions $d_i$'s are different from each other.

\begin{proposition}
\label{prop:orbitspacePI}
 The orbit  space $\Pi(d_1,\dots,d_l)/GL_\otimes(d_1,\dots,d_l)$ is homeomorphic to  the product $\mathcal{BM}(d_1)\times\cdots\times\mathcal{BM}(d_l),$ as long as $d_i\neq d_j$ for $i\neq j,$ $i,j=1,\dots,l.$ 
\end{proposition}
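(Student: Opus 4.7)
The plan is to factor everything through the continuous surjection $\otimes_\pi$ and to exploit the hypothesis of pairwise distinct dimensions in order to trivialize the permutation part of $GL_\otimes$. Since the $d_i$ are pairwise distinct, the only permutation $\sigma$ on $\{1,\dots,l\}$ for which $x^{\sigma(1)}\otimes\cdots\otimes x^{\sigma(l)}$ lies in $\otimes_{i=1}^l\mathbb{R}^{d_i}$ for arbitrary $x^i\in\mathbb{R}^{d_i}$ is the identity. Hence the subgroup $\mathcal{P}$ of Section \ref{sec:Gspaces} reduces to $\{I\}$, and every $T\in GL_\otimes$ admits a unique-up-to-scalars decomposition $T=T_1\otimes\cdots\otimes T_l$ with $T_i\in GL(d_i)$. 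By (\ref{eq:gltensor image of proj and inj tp}) the action on $\Pi$ is then the componentwise one,
\[
T(P_1\otimes_\pi\cdots\otimes_\pi P_l)=T_1P_1\otimes_\pi\cdots\otimes_\pi T_lP_l.
\]

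I would then introduce the candidate map
\[
\Phi:\mathcal{BM}(d_1)\times\cdots\times\mathcal{BM}(d_l)\longrightarrow \Pi(d_1,\dots,d_l)/GL_\otimes(d_1,\dots,d_l),\qquad \Phi([P_1],\dots,[P_l]):=[P_1\otimes_\pi\cdots\otimes_\pi P_l],
\]
and check that it is a continuous bijection. Well-definedness and continuity follow immediately from the componentwise formula above combined with the continuity of $\otimes_\pi$ (Proposition \ref{prop:resumeprop} (1)): the composition $\mathcal{B}(d_1)\times\cdots\times\mathcal{B}(d_l)\xrightarrow{\otimes_\pi}\Pi\to\Pi/GL_\otimes$ is continuous and constant on $GL(d_1)\times\cdots\times GL(d_l)$-orbits, so it descends to $\Phi$ through the orbit projection of each factor. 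Surjectivity is built into the definition of $\Pi$. For injectivity, suppose $\Phi([P_1],\dots,[P_l])=\Phi([Q_1],\dots,[Q_l])$; then there exist $T_i\in GL(d_i)$ with $Q_1\otimes_\pi\cdots\otimes_\pi Q_l=T_1P_1\otimes_\pi\cdots\otimes_\pi T_lP_l$, and (\ref{eq:inverseimage}) yields scalars $\lambda_i>0$ with $\prod_i\lambda_i=1$ and $Q_i=\lambda_iT_iP_i$. Since $\lambda_iT_i\in GL(d_i)$, we conclude $[P_i]=[Q_i]$ in $\mathcal{BM}(d_i)$ for every $i$.

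Finally, $\Phi$ is a homeomorphism by a standard compact-to-Hausdorff argument. The domain is compact as a finite product of Banach-Mazur compacta. For the target, I observe that $\Pi$ is closed and $GL_\otimes$-invariant in $\mathcal{B}_\otimes$ (Proposition \ref{prop:convisproper}), so the inclusion $\Pi\hookrightarrow\mathcal{B}_\otimes$ descends by the universal property to a continuous injection $\Pi/GL_\otimes\hookrightarrow\mathcal{B}_\otimes/GL_\otimes$ into the compact metric space of Theorem \ref{thm:resumenthm} (5), exhibiting $\Pi/GL_\otimes$ as Hausdorff. A continuous bijection from a compact space to a Hausdorff space is a homeomorphism, finishing the proof. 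The main obstacle is the injectivity step, where the distinctness of the $d_i$'s is essential: without it, a nontrivial $U_\sigma\in\mathcal{P}$ could identify tensor products of tuples that differ by a permutation of equal-dimension factors, and the target orbit space would be a further quotient of $\prod_i\mathcal{BM}(d_i)$ by the induced action of such permutations.
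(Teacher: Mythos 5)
Your proof is correct and follows essentially the same route as the paper's: factor through $\otimes_\pi$, use the hypothesis of pairwise distinct dimensions to reduce every $T\in GL_\otimes$ to a pure tensor $T_1\otimes\cdots\otimes T_l$, and derive the orbit correspondence from equation (\ref{eq:inverseimage}) (i.e.\ Proposition~3.6 of the cited tensorial-bodies paper). The only difference is that you spell out the compact-to-Hausdorff argument for upgrading the continuous bijection to a homeomorphism, a step the paper leaves implicit.
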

\begin{proof}
Consider the tensor product $\otimes_\pi:\mathcal{B}(d_1)\times\cdots\times\mathcal{B}(d_l)\rightarrow\Pi(d_1,\dots,d_l).$ We already know that $\otimes_\pi$ is a continuous surjective map \cite[Proposition 3.3]{tensorialbodies}. Moreover, for every $(P_1,\dots,P_l)$ and $(R_1,\dots,R_l)$ in $\mathcal{B}(d_1)\times\cdots\times\mathcal{B}(d_l),$  if $P_i$ belongs to the $GL(d_i)$-orbit of $R_i,$ $i=1,\ldots,l,$ then $P_i=T_iR_i$ for some $T_i\in GL(d_i),$ $i=1,\dots,l,$ and 
$$
P_1\otimes_\pi\cdots\otimes_\pi P_l=T_1\otimes\cdots\otimes T_l(R_1\otimes_\pi\cdots\otimes_\pi R_l).
$$
See equation (\ref{eq:gltensor image of proj and inj tp}). Hence, $P_1\otimes_\pi\cdots\otimes_\pi P_l$ belongs to the $GL_\otimes$-orbit of $R_1\otimes_\pi\cdots\otimes_\pi R_l.$ On the other side, 
since $d_i\neq d_j,$ for $i\neq j,$ then any map $T\in GL_\otimes(d_1,\dots,d_l)$ can be written as $T=T_1\otimes\cdots\otimes T_l$ for some $T_i\in GL(d_i),$ $i=1,\dots,l$ (see Section \ref{sec:Gspaces}). This, together with equation (\ref{eq:gltensor image of proj and inj tp}), shows that if $P_1\otimes_\pi\cdots\otimes_\pi P_l$ belongs to the $GL_\otimes$-orbit of $R_1\otimes_\pi\cdots\otimes_\pi R_l,$ then there exists $T_i\in GL(d_i),$ $i=1,\dots,l$ such that $T_1R_1\otimes_\pi\cdots\otimes_\pi T_lR_l=P_1\otimes_\pi\cdots\otimes_\pi P_l.$ Thus, from \cite[Proposition 3.6]{tensorialbodies}, there exists $\lambda_i>0,$ $i=1,\dots,l,$ such that $\lambda_1\cdots\lambda_l=1$ and $T_iR_i=\lambda_i P_i$ for every $i.$ Thus, by the identity (\ref{eq:projectivelambda}),
$$
P_1\otimes_\pi\cdots\otimes_\pi P_l=\lambda_1P_1\otimes_\pi\cdots\otimes_\pi\lambda_lP_l=T_1\otimes\cdots\otimes T_l(R_1\otimes_\pi\cdots\otimes_\pi R_l).
$$
Therefore, $P_1\otimes_\pi\cdots\otimes_\pi P_l$ belongs to the $GL_\otimes$-orbit of $R_1\otimes_\pi\cdots\otimes_\pi R_l$ if and only if each $P_i$ belongs to the $GL(d_i)$-orbit of $R_i.$ In consequence, the map $\otimes_\pi$ induces a homeomorphism between the orbit space $\Pi(d_1,\dots,d_l)/GL_\otimes(d_1,\dots,d_l)$  and $\mathcal{BM}(d_1)\times\cdots\times\mathcal{BM}(d_l).$
 \end{proof}

%\begin{thebibliography}{00}

%% \bibitem[Author(year)]{label}
%% Text of bibliographic item

%\bibitem[ ()]{}


\begin{thebibliography}{10}
\bibliographystyle{acm}

\bibitem{banacamazurcompactum}
{\sc {A}ntonyan, S.}
\newblock {T}he topology of the {B}anach-{M}azur compactum.
\newblock {\em {F}und. {M}ath. 166}, 3 (2000), 209--232.

\bibitem{antonyananerviowest}
{\sc {A}ntonyan, S.}
\newblock {W}est's problem on equivariant hyperspaces and {B}anach-{M}azur
  compacta.
\newblock {\em {T}rans. {A}mer. {M}ath. {S}oc. 355}, 8 (2003), 3379--3404.

\bibitem{SergeyBanachMazurmodel}
{\sc {A}ntonyan, S.}
\newblock New topological models for {B}anach-{M}azur compacta.
\newblock {\em Fundam. Prikl. Mat. 11}, 5 (2005), 19--31.

\bibitem{SergeyCorrigendum}
{\sc {A}ntonyan, S.}
\newblock Corrigendum to: ``{W}est's problem on equivariant hyperspaces and
  {B}anach-{M}azur compacta'' [{T}rans. {A}mer. {M}ath. {S}oc. {355} (2003),
  no. 8, 3379--3404; mr1974693].
\newblock {\em Trans. Amer. Math. Soc. 358}, 12 (2006), 5631--5633.

\bibitem{AntonyanNatalia}
{\sc {A}ntonyan, S., and {J}onard{-}P{\'e}rez, N.}
\newblock Affine group acting on hyperspaces of compact convex subsets of {$\mathbb{R}^n$}.
\newblock {\em {F}und. {M}ath. 223\/} (2013), 99--136.

\bibitem{antonyanjonardordonez}
{\sc Antonyan, S., {J}onard{-}P{\'e}rez, N., and Ju\'{a}rez-Ord\'{o}\~{n}ez,
  S.}
\newblock Hyperspaces of convex bodies of constant width.
\newblock 347--361.

\bibitem{xorgames}
{\sc {A}ubrun, G., {L}ami, L., {P}alazuelos, C., {S}zarek, S., and {W}inter,
  A.}
\newblock Universal gaps for {XOR} games from estimates on tensor norm ratios.
\newblock {\em {C}omm. {M}ath. {P}hys. 375}, 1 (2020), 679--724.

\bibitem{Aubrun2006}
{\sc {A}ubrun, G., and Szarek, S.}
\newblock Tensor products of convex sets and the volume of separable states on
  n qudits.
\newblock {\em Physical Review A 73}, 2 (2006), 022109.

\bibitem{Aubrun2017}
{\sc {A}ubrun, G., and {S}zarek, S.}
\newblock {\em {A}lice and {B}ob {M}eet {B}anach: {T}he {I}nterface of
  {A}symptotic {G}eometric {A}nalysis and {Q}uantum {I}nformation {T}heory},
  vol.~223.
\newblock {A}merican {M}athematical {S}oc., 2017.

\bibitem{Bazilevich97}
{\sc Bazilevich, L.}
\newblock Topology of a hyperspace of convex bodies of constant width.
\newblock {\em Math. {N}otes 62}, 6 (1997), 683--697.

\bibitem{Bazylevych2006}
{\sc Bazylevych, L., and Zarichnyi, M.}
\newblock On convex bodies of constant width.
\newblock {\em Topology {A}ppl. 153}, 11 (2006), 1699--1704.

\bibitem{Belegradek2018}
{\sc Belegradek, I.}
\newblock Hyperspaces of smooth convex bodies up to congruence.
\newblock {\em Adv. Math. 332\/} (2018), 176--198.

\bibitem{BessagaPelczynski}
{\sc {B}essaga, {C.} and {P}e\l czy\'{n}ski, {A}.}
\newblock {\em {S}elected {T}opics in {I}nfinite-{D}imensional                {T}opology}.
\newblock PWN---Polish Scientific Publishers, Warsaw, 1975.

\bibitem{Bredon}
{\sc {B}redon, G.}
\newblock {\em {I}ntroduction to {C}ompact {T}ransformation {G}roups}.
\newblock {A}cademic {P}ress, {N}ew {Y}ork-{L}ondon, 1972.

\bibitem{Chapmanbook}
{\sc Chapman, T.}
\newblock {\em Lectures on {H}ilbert {C}ube {M}anifolds}.
\newblock {A}merican {M}athematical {S}ociety, {P}rovidence, {R. I.}, 1976.
\newblock Expository lectures from the {CBMS} {R}egional {C}onference held at
  {G}uilford {C}ollege, {O}ctober 11-15, 1975, {R}egional {C}onference {S}eries
  in {M}athematics, No. 28.

\bibitem{defantfloret}
{\sc Defant, K., and Floret, K.}
\newblock {\em Tensor {N}orms and {O}perator {I}deals}.
\newblock {N}orth {H}olland {M}athematics {S}tudies, 1992.

\bibitem{maite}
{\sc {F}ern\'andez{-U}nzueta, M.}
\newblock The {S}egre cone of {B}anach spaces and multilinear mappings.
\newblock {\em {L}inear {M}ultilinear {A}lgebra\/} (2018).
\newblock 10.1080/03081087.2018.1509938.

\bibitem{tensorialbodies}
{\sc {F}ern{\'a}ndez{-U}nzueta, M., and {H}igueras{-M}onta{\~n}o, L.}
\newblock {C}onvex {B}odies {A}ssociated to {T}ensor {N}orms.
\newblock {\em {J}. {C}onvex {A}nal. 26}, 4 (2019), 1297--1320.

\bibitem{maiteluisa2}
{\sc {F}ern{\'a}ndez{-U}nzueta, M., and {H}igueras{-M}onta{\~n}o, L.}
\newblock A general theory of tensor products of convex sets in {E}uclidean
  spaces.
\newblock {\em Positivity 24}, 5 (2020), 1373--1398.

\bibitem{Gruber1993a}
{\sc Gruber, P.}
\newblock {T}he {S}pace of {C}onvex Bodies. {\em {I}n {H}andbook of {C}onvex {G}eometry {V}ol. {A}, {B}}.
\newblock {N}orth-{H}olland, {A}msterdam, 1993, 301--318.

\bibitem{topologytensorialbodies}
{\sc Higueras-Monta\~{n}o, L.}
\newblock A hyperspace of convex bodies arising from tensor norms.
\newblock {\em {T}opology and {A}ppl. 275\/} (2020), 107149, 20.

\bibitem{kadisonkingrose}
{\sc {K}adison, R., and {R}ingrose, J.}
\newblock {\em {F}undamentals of the {T}heory of {O}perator {A}lgebras}.
\newblock {A}cademic {P}ress, 1983.

\bibitem{Macbeath51}
{\sc Macbeath, A.}
\newblock A compactness theorem for affine equivalence-classes of convex
  regions.
\newblock {\em Canad. J. Math. 3\/} (1951), 54--61.

\bibitem{vanMillbook}
{\sc van {M}ill, J.}
\newblock {\em Infinite-{D}imensional {T}opology: {P}rerequisites and {I}ntroduction.}
\newblock North-Holland Publishing Co., Amsterdam, 1989.

\bibitem{nadler1979}
{\sc Nadler, S., Quinn, J., and Stavrakas, N.}
\newblock Hyperspaces of compact convex sets.
\newblock {\em {P}acific {J}. {M}ath. 83}, 2 (1979), 441--462.

\bibitem{LindenstraussMilman1993}
{\sc Lindenstrauss, {J}. and Milman, {V}. {D}.}
\newblock The local theory of normed spaces and its applications to convexity.
\newblock In {\em {H}andbook of {C}onvex {G}eometry, {V}ol. {A}, {B}}. {N}orth-{H}olland, {A}msterdam, 1993, 1149--1220.

\bibitem{Palais1}
{\sc {P}alais, R.}
\newblock {\em The classification of {$G$}-spaces}.
\newblock Mem. Amer. Math. Soc. No. 36, 1960.

\bibitem{Palais1961}
{\sc Palais, R.}
\newblock On the existence of slices for actions of non-compact {L}ie groups.
\newblock {\em {A}nn. of {M}ath. (2) 73\/} (1961), 295--323.

\bibitem{PearsAR}
{\sc Pears, {A}.{R}.}
\newblock {\em {D}imension {T}heory of {G}eneral {S}paces.}
\newblock {C}ambridge {U}niversity {P}ress, {C}ambridge,                    {E}ngland-{N}ew {Y}ork-{M}elbourne, 1975.

\bibitem{Ryan2013}
{\sc {R}yan, R.}
\newblock {\em {I}ntroduction to {T}ensor {P}roducts of {B}anach {S}paces}.
\newblock {S}pringer {M}onographs in {M}athematics, 2002.

\bibitem{Schaffer1967}
{\sc Sch\"{a}ffer, {J}.}
\newblock {{I}nner diameter, perimeter, and girth of spheres.}
\newblock {\em {M}ath. {A}nn. 173 (1967), 59--79; addendum, ibid.} 173 1967 79--82. 

\bibitem{Schneider1993}
{\sc {S}chneider, R.}
\newblock {\em Convex {B}odies: {T}he {B}runn-{M}inkowski {T}heory.}
\newblock {C}ambridge {U}niversity {P}ress, {C}ambridge, 1993.
  
\bibitem{ThompsonAC}
{\sc {T}hompson, {A.}{C.}}
\newblock {\em {M}inkowski {G}eometry}.
\newblock {C}ambridge {U}niversity {P}ress, {C}ambridge, 1996.

\bibitem{Tomczak-Jaegermann1989}
{\sc Tomczak-Jaegermann, N.}
\newblock {\em {B}anach-{M}azur {D}istances and {F}inite-{D}imensional
  {O}perator {I}deals}.
\newblock Longman Sc \& Tech, 1989.

\bibitem{Torunczyk1980}
{\sc Toru\'{n}czyk, H.}
\newblock On {${\rm CE}$}-images of the {H}ilbert cube and characterization of
  {$Q$}-manifolds.
\newblock {\em Fund. Math. 106}, 1 (1980), 31--40.


\end{thebibliography}
\end{document}